\documentclass[11pt,reqno]{amsart}
\usepackage[margin=1.1in]{geometry}
\usepackage{amsmath,amssymb,amsthm,mathtools}
\usepackage{booktabs}
\usepackage[expansion=false]{microtype}
\usepackage{xcolor}
\usepackage[colorlinks=true,linkcolor=blue!55!black,citecolor=teal!55!black,urlcolor=blue!55!black]{hyperref}

\theoremstyle{plain}
\newtheorem{theorem}{Theorem}[section]
\newtheorem{lemma}[theorem]{Lemma}
\newtheorem{proposition}[theorem]{Proposition}
\newtheorem{corollary}[theorem]{Corollary}
\newtheorem{conjecture}[theorem]{Conjecture}
\theoremstyle{definition}
\newtheorem{definition}[theorem]{Definition}
\theoremstyle{remark}
\newtheorem{remark}[theorem]{Remark}
\newtheorem{question}[theorem]{Question}

\newtheorem{theoremletter}{Theorem}

\newtheorem{corollaryletter}[theoremletter]{Corollary}
\newtheorem{conjectureletter}[theoremletter]{Conjecture}

\newcommand{\C}{\mathbb{C}}
\newcommand{\Q}{\mathbb{Q}}
\newcommand{\Z}{\mathbb{Z}}
\newcommand{\F}{\mathbb{F}}
\newcommand{\Zp}{\Z_p}
\newcommand{\Zpx}{\Z_p^{\times}}
\newcommand{\Aut}{\mathrm{Aut}}
\newcommand{\Qut}{\mathrm{Qut}}
\newcommand{\id}{\mathrm{id}}
\newcommand{\BM}{\mathfrak{B}}
\newcommand{\TA}{\mathfrak{T}}
\newcommand{\cat}{\mathcal{C}}
\newcommand{\one}{\mathbf{1}}
\newcommand{\odotp}{\odot}
\newcommand{\Rmod}{\mathcal{R}}
\newcommand{\DA}{\mathfrak{D}}
\newcommand{\Mthree}{M_{3}}
\newcommand{\Cay}{\mathrm{Cay}}
\newcommand{\AGL}{\mathrm{AGL}}

\begin{document}

\title[Terwilliger algebras and quantum symmetry of prime-order circulants]
{Two-basepoint Terwilliger algebras\\ and the quantum symmetry of prime-order circulants}

\author{Mohammad F. Marashdeh}
\address{Department of Mathematics, Mutah University, Karak, Jordan}
\email{marashdeh@mutah.edu.jo}

\subjclass[2020]{Primary 20G42; Secondary 05E30, 05C25, 46L67}
\keywords{quantum automorphism group; quantum permutation group; circulant graph;
Paley graph; Terwilliger algebra; cyclotomic association scheme}

\begin{abstract}
Which vertex-transitive graphs of prime order have quantum symmetry? The question of Banica,
Bichon and Chenevier is open in the dense regime of Paley graphs, where coherent-algebra methods
give no information. To each such graph we attach a two-basepoint Terwilliger algebra of its
cyclotomic scheme and study the module it generates from the basepoints: fullness forces the
quantum permutation algebra to be commutative, and the module admits no intermediate state,
containing either exactly two point masses or all $p$ of them. One point mass, captured at any
depth, therefore suffices, and Chassaniol's orbital criterion is the depth-one case. Three
consequences follow. A sharp counting argument replaces the Banica--Bichon--Chenevier threshold
$p>6^{\varphi(k)}$ by the quadratic bound $p>(k-1)(k-2)+2$, where $k$ is the type. Four
certificates, each a short list of additions modulo $p$, settle $C_{31}(2,4,8,15)$ and
$C_{41}(4,10,16,18)$, the two graphs left open by Chassaniol, and complete the classification for
type at most $10$ without machine assistance. An exact computation extends the dichotomy
``quantum symmetry if and only if complete or empty'' to all prime orders $p\le250$, settling the
Paley graphs $P_{p}$ with $p\le241$, the first beyond $P_{17}$. What remains is the capture of a
single explicit vector: the midpoint $2^{-1}$ of the two basepoints.
\end{abstract}

\maketitle

\section{Introduction}\label{sec:intro}

For a finite graph $X$, the quantum automorphism group $\Qut(X)$, introduced by Banica
\cite{Ban05} refining a construction of Bichon \cite{Bic03}, is a compact quantum group in the
sense of Woronowicz which contains the classical automorphism group $\Aut(X)$ and is contained
in Wang's quantum permutation group $S_{n}^{+}$ \cite{Wang98}. One says that $X$ \emph{has no
quantum symmetry}, or is \emph{quantum rigid}, if the defining quantum permutation algebra
$A(X)$ is commutative, equivalently if $\Qut(X)=\Aut(X)$. Deciding which graphs are quantum
rigid is a central problem of the theory, with consequences for nonlocal games and quantum information through the graph
isomorphism game \cite{AMRSSV19,LMR20,MR20}; individual graphs and small orders have been settled
by a variety of methods \cite{BB07,Sch18}. The subject remains active: recent contributions
include the quantum Frucht theorem of Brannan, Gromada, Matsuda, Skalski and Wasilewski
\cite{BGMSW25}, and the construction, by van Dobben de Bruyn, Roberson and Schmidt
\cite{vDdBRS25}, of graphs with trivial automorphism group but nontrivial quantum automorphism
group.

By Turner's theorem \cite{Tur67}, a vertex-transitive graph of prime order $p$ is circulant.
Banica, Bichon and Chenevier \cite[Thm.~5.1, Lem.~5.2]{BBC07} proved that a circulant $p$-graph
of \emph{type} $k=|E|$ --- where $E\le\Zpx$ is the multiplier group of the connection set ---
has no quantum symmetry whenever $p>6^{\varphi(k)}$, leaving finitely many primes for each fixed
type. Chassaniol \cite{Cha19} developed an intertwiner-space method and settled all types
$k\le8$; the Paley graphs $P_{13}=C_{13}(3,4)$ and $P_{17}=C_{17}(2,4,8)$ fell outside his
general criterion and were treated by arguments specific to them, and independent proofs for
these two appear in \cite{Sch20adv}. He asked whether the same conclusion holds for larger type, and
identified the two graphs
\[
C_{31}(2,4,8,15)\qquad\text{and}\qquad C_{41}(4,10,16,18)
\]
of type $10$ as the cases where his criterion fails. (Here $C_{n}(a_{1},\dots,a_{m})$ denotes
the circulant with connection set $\{\pm1,\pm a_{1},\dots,\pm a_{m}\}$.) As shown in
Section~\ref{sec:circulant}, these two graphs are the generalized Paley graphs whose connection
sets are, respectively, the cubic residues modulo $31$ and the quartic residues modulo $41$.
Moreover, by a reduction going back to \cite[Prop.~6.11]{Cha19}, the whole prime-order problem
reduces to the family $\Cay(\Zp,E)$ with $E\le\Zpx$ a proper subgroup --- the \emph{generalized
Paley graphs} --- whose extreme dense case, $E$ the group of quadratic residues, is the family
of classical Paley graphs. For Paley graphs the previously known results are exactly $P_{9}$,
$P_{13}$ and $P_{17}$ \cite{BB07,Cha19,Sch20adv}, and the coherent-algebra approach of Lupini,
Man\v{c}inska and Roberson yields nothing further for strongly regular graphs \cite{LMR20}. The
Paley family, and the surrounding dense generalized Paley regime, have consequently stood as the
main obstacle in the prime-order classification.

\subsection*{The two-basepoint module}
We work in the theory of Terwilliger (subconstituent) algebras \cite{Ter92}. For an association
scheme with Bose--Mesner algebra $\BM$ and a base vertex $x$, the Terwilliger algebra
$\TA(x)$ is generated by $\BM$ together with the dual idempotents at $x$; edge- and
multi-partition variants are treated in \cite{CMT05,HY23}, and for cyclotomic schemes the module
structure of $\TA(x)$ is governed by Jacobi sums \cite{IIY99}. A single basepoint cannot suffice: the algebra at $x$
commutes with the stabilizer of $x$ in $\Zp\rtimes E$, so the module it generates from $\delta_x$
consists of vectors constant on the orbits of that stabilizer and has dimension at most $r+1$,
where $r$ is the number of classes (Remark~\ref{rem:whytwo}). The stabilizer of an ordered
\emph{pair} of distinct vertices is trivial, and no such obstruction constrains the two-basepoint
module.

Throughout, $p\ge5$ is prime and $E\le\Zpx$ is a subgroup with $-1\in E$; we write
$r=(p-1)/|E|$ and let $C_{1},\dots,C_{r}$ be the cosets of $E$ in $\Zpx$, with class matrices
$(T_{s})_{x,y}=\delta_{x-y\in C_{s}}$ and $T_{0}=I$. For a vertex $a$ and $0\le s\le r$ we set
$E^{*}_{s}(a)=\mathrm{diag}(\chi_{a+C_{s}})$, where $C_{0}:=\{0\}$; the starred symbols are the
dual idempotents of Terwilliger's theory and are unrelated to the multiplier group $E$. The \emph{two-basepoint
Terwilliger algebra} at an ordered pair $(a,b)$ of distinct vertices is
\[
\TA(a,b)\;=\;\big\langle\, T_{0},\dots,T_{r},\;E^{*}_{0}(a),\dots,E^{*}_{r}(a),
\;E^{*}_{0}(b),\dots,E^{*}_{r}(b)\,\big\rangle\;\subseteq\;M_{p}(\C),
\]
and the associated module is $W(a,b)=\TA(a,b)\,\delta_{a}+\TA(a,b)\,\delta_{b}\subseteq\C^{p}$.
A slightly larger object is the \emph{reachable module} $\Rmod(a,b)$ of
Section~\ref{sec:capture}, consisting of all vectors of the form $A(e_{a}\otimes e_{b})$ with
$A$ a $(2,1)$-morphism of the intertwiner category; one always has
$W(a,b)\subseteq\Rmod(a,b)$.

\subsection*{Main results}
The first theorem derives quantum rigidity from fullness of the module.

\begin{theoremletter}[$=$ Theorem~\ref{thm:criterion}]\label{thm:A}
Let $X$ be a circulant graph of prime order $p\ge5$ whose connection set has multiplier group
$E\lneq\Zpx$. If $\Rmod(0,1)=\C^{p}$ --- in particular if $W(0,1)=\C^{p}$ --- then $A(X)$ is
commutative; that is, $X$ has no quantum symmetry and $\Qut(X)=\Aut(X)$.
\end{theoremletter}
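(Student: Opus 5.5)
The plan is to work in the intertwiner category of $\Qut(X)$, whose morphisms are the maps between tensor powers of $\C^{p}$ that intertwine the fundamental representation $u=(u_{ij})$, with $u_{ij}$ the generators of $A(X)$. The $(2,1)$-morphisms contain the multiplication $m\colon e_i\otimes e_j\mapsto\delta_{ij}e_i$, the adjacency matrix and, by the standard closure properties, every class matrix $T_s$ of the orbital (cyclotomic) scheme; this is Chassaniol's orbital setting, which we take as given. Consequently every generator of $\TA(a,b)$ is realised inside the category acting on $e_a\otimes e_b$: the operator $E^*_s(a)$ is obtained by applying $T_s$ to the first tensor leg and multiplying with the second leg through $m$, and compositions of such operations generate $W(a,b)$. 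This gives the inclusion $W(a,b)\subseteq\Rmod(a,b)$, which we assume from Section~\ref{sec:capture}.

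The key step is the following: for any $(2,1)$-morphism $A$, the vector $A(e_a\otimes e_b)$ satisfies the intertwining relation
\[
\sum_{a,b} u_{ia}u_{jb}\,\langle e_k, A(e_a\otimes e_b)\rangle \;=\; \sum_{\ell} \langle e_\ell, A(e_i\otimes e_j)\rangle\, u_{\ell k}.
\]
First we use transitivity of $\Zp\rtimes E$ on ordered pairs of distinct vertices, restricted to the translations and multipliers that lie in $\Aut(X)$, to reduce from a general pair $(a,b)$ to $(0,1)$; the resulting $\Rmod(a,b)$ is just a permuted copy of $\Rmod(0,1)$, so each is all of $\C^p$. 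Fullness then says that for all $a\neq b$ and all $k$ there is a morphism $A$ with $A(e_a\otimes e_b)=e_k$. Feeding this into the relation and multiplying by projections, we aim to show $u_{ia}u_{jb}$ lies in the span of products of the form $u_{\ell k}\cdot(\cdot)$, and then to deduce $u_{ia}u_{jb}=u_{jb}u_{ia}$. Concretely, we would try the standard route: show that $\Rmod=\C^p$ forces the projection $e_a\otimes e_b\mapsto e_c\otimes e_d$ (for suitable quadruples) to be compatible with the magic unitary, i.e.\ that the tensor $e_a\otimes e_b\otimes e_c$ with $c$ arbitrary lies in the three-point intertwiner space. Then the relation $u_{ia}u_{jb}u_{kc}$ is determined, which yields $u_{ia}u_{jb}=u_{ia}u_{jb}\,\delta$-sums $=u_{jb}u_{ia}$, as in Chassaniol's orbital criterion and the Lupini--Mančinska--Roberson argument that orbitals of pairs being intertwiners forces commutativity.

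The main obstacle is this last deduction, turning ``every point mass is reachable from $e_0\otimes e_1$'' into commutation relations. We would first try to show that the characteristic vector $\delta_{(a,b)}\in\C^{p}\otimes\C^{p}$ is an intertwiner, i.e.\ that the orbital of each pair is a singleton for $\Qut(X)$. Using $A$ with $A(e_0\otimes e_1)=e_0$ and composing with its adjoint together with comultiplication $m^{*}$, we would like to produce a $(2,2)$-morphism projecting onto $e_0\otimes e_1$ within its $\Qut$-orbital. The trivial stabilizer of ordered pairs is what should make this orbital a single $\Aut$-orbit, and then the orbital criterion (quantum orbitals equal classical ones, with $\Aut(X)$ acting regularly on the pairs of an orbital) gives $u_{ia}u_{jb}=u_{jb}u_{ia}$. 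The hypothesis $E\neq\Zpx$ excludes the complete graph, where the argument must fail.
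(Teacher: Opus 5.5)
Your proposal breaks down at the step you flag as the main obstacle, and the route you sketch for it cannot work. The intermediate targets are impossible. By Lemma~\ref{lem:cat}(iii), every morphism of the category intertwines the tensor powers of $P_{\sigma}$ for all $\sigma\in\Aut(X)\supseteq\Zp\rtimes E$, which is transitive on vertices. Hence neither $e_{a}\otimes e_{b}\in C(0,2)$, nor $e_{a}\otimes e_{b}\otimes e_{c}\in C(0,3)$, nor any $(2,2)$-morphism singling out the pair $(0,1)$ can be an intertwiner.

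The criterion you fall back on is also not available. You invoke: ``quantum orbitals on pairs equal the classical ones and $\Aut(X)$ acts regularly on each orbital, hence $u_{ia}u_{jb}=u_{jb}u_{ia}$.'' Its hypotheses hold for every Paley graph $P_{p}$ with no module condition at all. The quantum orbital algebra is a coherent algebra containing $d_{X}$, so for a rank-three graph it equals the scheme. Moreover $\Aut(P_{p})=\Zp\rtimes E$ has trivial two-point stabilizers. Such a criterion would therefore settle the open Paley problem outright, without ever using $\Rmod(0,1)=\C^{p}$. Fullness has to be converted into commutativity by an explicit construction, and your proposal contains none.

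The missing idea is to build the flip $\Sigma$ inside the category and then invoke Lemma~\ref{lem:cat}(ii); this is the assembly lemma (Lemma~\ref{lem:assembly}). Start from $F^{(s)}_{x}\in\cat\cap C(2,1)$ with $F^{(s)}_{x}(e_{0}\otimes e_{y_{s}})=e_{x}$.
\begin{enumerate}
\item[(1)] Double both legs with $M^{*}\otimes M^{*}$.
\item[(2)] Insert an auxiliary vertex $w\notin\{0,y_{s}\}$ in the middle via $M^{*}\circ F^{(s)}_{w}$.
\item[(3)] Apply extraction morphisms to the pairs $(0,w)$ and $(w,y_{s})$ that output $e_{y_{s}}$ and $e_{0}$. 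These are transported by elements of $\Zp\rtimes E$ carrying $(0,y_{t_{1}})\mapsto(0,w)$ and $(0,y_{t_{2}})\mapsto(w,y_{s})$.
\end{enumerate}
The resulting $L_{s}\in\cat(2,2)$ sends $e_{0}\otimes e_{y_{s}}$ to $e_{y_{s}}\otimes e_{0}$. By Lemma~\ref{lem:Gequiv} and the transitivity of $\Zp\rtimes E$ on the orbital $O_{s}=\{(i,j):j-i\in C_{s}\}$, it swaps every pair in $O_{s}$. With the orbital projections $g_{s}=(\id\otimes M)(\id\otimes T_{s}\otimes\id)(M^{*}\otimes\id)$ one gets $\Sigma=g_{0}+\sum_{s\ge1}L_{s}g_{s}$. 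This is where the classical group genuinely enters: it spreads a value computed on one base tensor over a whole orbital, not through any equality of orbitals.

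The construction needs fullness at $(0,y_{s})$ for \emph{every} $s$, and your reduction step does not give it. The group $\Zp\rtimes E$ is not transitive on ordered pairs of distinct vertices unless $E=\Zpx$; its orbits are the $r$ orbitals $O_{s}$. What is needed is Lemma~\ref{lem:equivariance}. The multipliers $x\mapsto ux$ with $u\notin E$ are not automorphisms of $X$, but conjugation by them permutes the $T_{s}$ and hence preserves $\cat$. Therefore $\Rmod(0,y_{s})=R_{y_{s}}\Rmod(0,1)=\C^{p}$ by \eqref{eq:Requiv}.
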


The second theorem asserts that the module admits no
intermediate state: it is either spanned by the two basepoint indicators or is all of $\C^{p}$.

\begin{theoremletter}[$=$ Theorem~\ref{thm:capture}]\label{thm:B}
Let $p\ge5$ be prime and $E\le\Zpx$ a subgroup with $-1\in E$, and put
\[
K\;=\;\{\,z\in\Zp\;:\;\delta_{z}\in\Rmod(0,1)\,\}.
\]
Then either $K=\{0,1\}$ or $K=\Zp$; in the latter case $\Rmod(0,1)=\C^{p}$.
\end{theoremletter}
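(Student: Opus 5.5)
The plan is to exploit the symmetries of $\Rmod(0,1)$ and to show that $K$, once it contains a third point, is closed under enough affine operations to generate all of $\Zp$.

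\textbf{Symmetries and affine closure.} First I record the structural facts about the reachable module that should follow from its definition through $(2,1)$-morphisms of the intertwiner category. The ingredients are the class matrices $T_s$, the multiplication (Schur product) map $e_x\otimes e_y\mapsto\delta_{x=y}e_x$, and the flip and composition of morphisms. From these:
\begin{itemize}
\item[(i)] $\Rmod(0,1)$ is a subspace containing $\delta_0,\delta_1$, so $\{0,1\}\subseteq K$.
\item[(ii)] It is invariant under the flip $0\leftrightarrow1$. Concretely, the map $z\mapsto1-z$ permutes $\Zp$ and preserves the scheme because $-1\in E$. Hence $K$ is stable under $z\mapsto 1-z$.
\item[(iii)] There is a substitution principle. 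If $\delta_u\in\Rmod(0,1)$ and $\delta_v\in\Rmod(0,1)$, then for any $(2,1)$-morphism $B$, the vector $B(e_u\otimes e_v)$ is reachable. This holds because $e_u\otimes e_v$ is obtained by tensoring two reachable vectors, each built from the same input $e_0\otimes e_1$ via comultiplication $e_x\mapsto e_x\otimes e_x$, and then composing.
\end{itemize}
Combining (iii) with the affine equivariance of the scheme, $\Rmod(u,v)$ is the image of $\Rmod(0,1)$ under the unique affine map $z\mapsto u+(v-u)z$ sending $0\mapsto u$ and $1\mapsto v$. So I expect the key closure property: if $u,v\in K$ with $u\ne v$, then $u+(v-u)K\subseteq K$.

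\textbf{Generating $\Zp$ from a third point.} Suppose $c\in K\setminus\{0,1\}$. Applying closure to the pair $(0,c)$ gives $cK\subseteq K$, so $c^n\in K$ for all $n$. Applying it to $(c,0)$ and to $(1,0)$ gives maps $z\mapsto c(1-z)$ and $z\mapsto 1-z$. Together these generate the affine group $\langle z\mapsto cz,\ z\mapsto 1-z\rangle$, which contains the translations $z\mapsto z+(1-c)$ obtained by composition. Since $c\ne1$, the translation is nontrivial, so the orbit of $0$ under this group is all of $\Zp$. Hence $K=\Zp$.

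Once every $\delta_z$ lies in the subspace $\Rmod(0,1)$, they span, giving $\Rmod(0,1)=\C^p$.

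\textbf{Main obstacle.} The delicate step is making the substitution/closure property rigorous. The input is $e_0\otimes e_1$, and reaching $e_u\otimes e_v$ requires producing a $(2,2)$-morphism from the $(2,1)$-morphisms. My approach is to take $A_u,A_v$ with $A_u(e_0\otimes e_1)=\delta_u$ and $A_v(e_0\otimes e_1)=\delta_v$, and form
\[
(A_u\otimes A_v)\circ(\text{duplication of each leg}).
\]
Here duplication is the standard $(1,2)$-morphism $e_x\mapsto e_x\otimes e_x$, with leg crossings available because $A(X)$ is a quantum permutation algebra, so the category is at least planar. Any needed leg crossing I would route through the flip symmetry together with $-1\in E$.

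I also need that the closure maps preserve the set of morphisms. I would use the classical $\Zp\rtimes E$ symmetry for translations and the multipliers in $E$. However, a scaling by $v-u\notin E$ is not a scheme automorphism, so I must avoid invoking symmetry for it. Instead, the closure statement should be derived purely from substitution: $\Rmod(u,v)=\{B(e_u\otimes e_v)\}$, together with the observation that $B(e_u\otimes e_v)$ is the image of $B(e_0\otimes e_1)$ under the affine map. The latter is the part I expect to require care. It holds because the two-point configuration $(u,v)$ is mapped to $(0,1)$ by an affine map intertwining only the pairwise coset relations involving the basepoints, and this is the point to verify.
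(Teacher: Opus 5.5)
There is a genuine gap. The closure property $u+(v-u)K\subseteq K$, on which your endgame rests, is never established, and both mechanisms you propose for it fail.

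First, your substitution morphism does not produce $\delta_u\otimes\delta_v$. Duplicating each leg sends $e_0\otimes e_1$ to $e_0\otimes e_0\otimes e_1\otimes e_1$, so $(A_u\otimes A_v)\circ(M^{*}\otimes M^{*})$ returns $A_u(e_0\otimes e_0)\otimes A_v(e_1\otimes e_1)$. To feed $e_0\otimes e_1$ into both factors you need $\id\otimes\Sigma\otimes\id$. ``Planar'' means precisely that such crossings are \emph{not} available. Indeed, by Lemma~\ref{lem:cat}(ii) and Proposition~\ref{prop:Eonly}, $\Sigma\in\cat(2,2)$ is equivalent to commutativity of $A(X)$, which is what the method is trying to prove. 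The hypothesis $-1\in E$ cannot supply the crossing either. The automorphism $\beta(x)=1-x$ relabels vertices and gives $P_\beta\Rmod(0,1)=\Rmod(1,0)$, but passing from the ordered pair $(1,0)$ back to $(0,1)$ is an exchange of tensor legs. So your (ii), and your use of the reversed pairs $(1,0)$ and $(c,0)$, are unsupported as written. An identification $\Rmod(1,0)=\Rmod(0,1)$ would need a separate argument, for instance invariance of $\cat$ under left--right reflection of diagrams, which you do not give. What can be done without crossings is to insert a captured point between the basepoints: $e_0\otimes e_1\mapsto e_0\otimes e_0\otimes e_1\otimes e_1\mapsto e_0\otimes e_c\otimes e_1\mapsto e_0\otimes e_c\otimes e_c\otimes e_1$. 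This gives access to the ordered pairs $(0,c)$ and $(c,1)$ only. It is Proposition~\ref{prop:cascade}, and it yields $\Rmod(0,c)\subseteq\Rmod(0,1)$ and $\Rmod(c,1)\subseteq\Rmod(0,1)$.

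Second, turning these inclusions into statements about $K$ requires $\Rmod(\alpha(a),\alpha(b))=P_\alpha\Rmod(a,b)$ for every $\alpha\in\AGL(1,p)$, and you explicitly set aside the correct reason for it. Your substitute is that $B(e_u\otimes e_v)$ is the affine image of $B(e_0\otimes e_1)$ for each $B$. This is false: for $B=T_s\circ\pi_1$ the two vectors are $\chi_{u+C_s}$ and $\chi_{u+(v-u)C_s}$, which differ once $v-u\notin E$. The missing observation is this: although $x\mapsto(v-u)x$ is not a graph automorphism, conjugation by $R_{v-u}$ permutes the cosets $C_1,\dots,C_r$, hence the generators $T_1,\dots,T_r$ of $\cat$, and so preserves $\cat$. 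The equivariance therefore holds for the set of morphisms, not morphism by morphism (Lemma~\ref{lem:equivariance}, \eqref{eq:Requiv}).

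With both ingredients, put $\alpha_c(x)=(1-c)x+c$. You get $R_c\Rmod(0,1)=\Rmod(0,c)\subseteq\Rmod(0,1)$ and $P_{\alpha_c}\Rmod(0,1)=\Rmod(c,1)\subseteq\Rmod(0,1)$. Since permutation matrices are invertible, these are equalities, so $cK=K$ and $\alpha_c(K)=K$. Your generation step then goes through with these two maps in place of $x\mapsto1-x$ and $x\mapsto c(1-x)$. With $\mu_c(x)=cx$, the commutator $\alpha_c\mu_c\alpha_c^{-1}\mu_c^{-1}$ is the translation $x\mapsto x+c(1-c)$, which is nontrivial because $c\notin\{0,1\}$. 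Hence the orbit of $0\in K$ is all of $\Zp$, and $K=\Zp$. That last step is actually shorter than the paper's route through Lemma~\ref{lem:group}, which recovers $1-K=K$ by a counting argument in the cyclic group $\Zpx$.
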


Thus a single point mass, captured at arbitrary depth, implies that every point mass lies in the
module.
The mechanism is arithmetic rather than combinatorial: the cascade lemma
(Proposition~\ref{prop:cascade}) forces $K\setminus\{0\}$ to be a subgroup of $\Zpx$ stable
under $h\mapsto1-h$, and an elementary argument shows that such a subgroup of order greater than
one is closed under addition, hence exhausts $\Zpx$. In particular no saturation procedure is
required: to prove that the module is full it suffices to display one point mass. The criterion
of \cite{Cha19} is the instance of this principle at the shallowest possible depth.

\begin{corollaryletter}[$=$ Corollary~\ref{cor:chassaniol}]\label{thm:C}
Let $E\lneq\Zpx$. If $|C_{s}\cap(1+C_{t})|=1$ for some $1\le s,t\le r$ --- that is, if some cyclotomic number of
order $r$ for $p$ equals $1$ --- then $\Rmod(0,1)=\C^{p}$, and every circulant graph of prime
order $p$ with multiplier group $E$ has no quantum symmetry.
\end{corollaryletter}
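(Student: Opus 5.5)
The plan is to exhibit one point mass in $W(0,1)\subseteq\Rmod(0,1)$ other than $\delta_0,\delta_1$, and then conclude with Theorem~\ref{thm:B} and Theorem~\ref{thm:A}. Assume $|C_s\cap(1+C_t)|=1$ and let $z$ be the unique element of $C_s\cap(1+C_t)$. I will build a product of dual idempotents at the two basepoints $0$ and $1$, applied to an all-ones vector. First, $T_s\delta_0$ is the indicator of $0+C_s=C_s$, because $(T_s\delta_0)_x=\delta_{x\in C_s}$. Applying $E^*_t(1)=\mathrm{diag}(\chi_{1+C_t})$ gives
\[
E^{*}_{t}(1)\,T_{s}\,\delta_{0}\;=\;\chi_{C_{s}\cap(1+C_{t})}\;=\;\delta_{z}.
\]
Both $T_s$ and $E^*_t(1)$ lie in $\TA(0,1)$, so $\delta_z\in W(0,1)\subseteq\Rmod(0,1)$.

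Next I check that $z\notin\{0,1\}$. Since $C_s\subseteq\Zpx$, we have $z\ne0$. Since $C_t\subseteq\Zpx$, we have $z-1\ne0$, so $z\ne1$. Hence $z\in K\setminus\{0,1\}$ and $K\ne\{0,1\}$.

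By Theorem~\ref{thm:B}, $K=\Zp$ and $\Rmod(0,1)=\C^p$. Theorem~\ref{thm:A} then shows that $A(X)$ is commutative for every circulant $X$ of order $p$ with multiplier group $E$.

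Two details need care. The first is the indexing convention for cyclotomic numbers. Whether the paper's number is $|C_s\cap(1+C_t)|$ or $|(C_s+1)\cap C_t|$, the two differ only by swapping which basepoint plays which role, together with the symmetry $-1\in E$ (so $-C_t=C_t$). Either convention is handled by the same two-step product, using $T_t\delta_1$ followed by $E^*_s(0)$ if needed. The second is the hypothesis of Theorem~\ref{thm:B}, which requires $-1\in E$. This is automatic here, since $E$ is the multiplier group of an undirected connection set.

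I do not expect any real obstacle: all the depth lives in Theorem~\ref{thm:B}, and this corollary is simply its depth-one instance.
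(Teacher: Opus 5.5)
Your proposal is correct and follows the paper's proof essentially verbatim. The paper likewise writes $\delta_{z}=\chi_{C_{s}\cap(1+C_{t})}=E^{*}_{t}(1)T_{s}\delta_{0}\in W(0,1)$, rules out $z\in\{0,1\}$ by the same two observations, and concludes through Corollary~\ref{cor:onepoint}, which is exactly the combination of Theorems~\ref{thm:B} and~\ref{thm:A} that you invoke. Two cosmetic points: the product is applied to $\delta_{0}$, not to an all-ones vector (as your own computation shows), and the aside on indexing conventions is unnecessary, since the statement already fixes the number $|C_{s}\cap(1+C_{t})|$.
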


Captures at greater depth yield stronger results in two respects. The first is a threshold theorem. A
counting argument bounds the number of vertices lying in a cyclotomic block of size at least
two by $(k-1)(k-2)$, so a depth-one capture is automatic beyond a quadratic bound in the type.

\begin{theoremletter}[$=$ Theorem~\ref{thm:threshold}]\label{thm:Q}
Let $X$ be a circulant graph of prime order $p$ and type $k$. If $p>(k-1)(k-2)+2$, then $X$ has
no quantum symmetry unless it is complete or empty.
\end{theoremletter}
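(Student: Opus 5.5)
The plan is to reduce Theorem~\ref{thm:Q} to Corollary~\ref{thm:C}: exhibit a cyclotomic number of order $r$ equal to $1$ whenever $p>(k-1)(k-2)+2$.

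\emph{Reduction.} First we pass to the generalized Paley setting. By the reduction of \cite[Prop.~6.11]{Cha19} recalled in the introduction, a circulant of prime order $p$ and type $k$ that is neither complete nor empty has multiplier group $E$ with $|E|=k$ and $E\lneq\Zpx$; its connection set is a union of cosets of $E$. It therefore suffices to show that the cyclotomic numbers $c_{s,t}=|C_s\cap(1+C_t)|$, for $1\le s,t\le r$, are not all different from $1$. If $k=1$ or $k=2$ the bound $p>2$ already holds. Cyclotomic numbers here equal $1$ trivially in small cases: for $|E|=2$ the cosets are pairs $\{\pm c\}$, and one should check this directly. I would handle these degenerate types separately, or note that $(k-1)(k-2)=0$ forces the counting below to give blocks of size one.

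\emph{Counting step.} Fix the partition of $\Zp\setminus\{0,1\}$ into the blocks $C_s\cap(1+C_t)$. There are $p-2$ such vertices. Call a vertex $z$ \emph{bad} if its block has size at least $2$, that is, if there is $z'\ne z$ with $z'\in zE$ and $z'-1\in(z-1)E$. Write $z'=ez$ and $ez-1=f(z-1)$ with $e,f\in E$ and $e\ne1$. Then
\[
z(e-f)=1-f .
\]
If $e=f\neq1$ this forces $f=1$, a contradiction, so $e\ne f$. Hence $z=(1-f)/(e-f)$ is determined by the pair $(e,f)$ with $e\ne1$ and $f\ne e$. The case $f=1$ gives $z=0$, which is excluded. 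So $e\in E\setminus\{1\}$ and $f\in E\setminus\{1,e\}$, giving at most $(k-1)(k-2)$ pairs. This bounds the number of bad vertices by $(k-1)(k-2)$.

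\emph{Conclusion.} If $p-2>(k-1)(k-2)$, some vertex $z\ne0,1$ is not bad. Its block then has size exactly $1$, so $c_{s,t}=1$ for the indices $s,t$ with $z\in C_s$ and $z-1\in C_t$. Corollary~\ref{thm:C} now gives $\Rmod(0,1)=\C^p$ and quantum rigidity.

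The main point needing care is the reduction step, not the counting. We must confirm that the type $k$ in the statement coincides with $|E|$ for the multiplier group, and that complete and empty graphs are exactly the cases $E=\Zpx$ or an empty connection set. This is what allows Corollary~\ref{thm:C} to apply with $E\lneq\Zpx$ and gives the count its sharpness.
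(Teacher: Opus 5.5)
Your proposal is correct and is essentially the paper's proof. It follows the same three steps: the reduction to a multiplier group $E\lneq\Zpx$ (Proposition~\ref{prop:Eonly}); the elimination $z(e-f)=1-f$, which bounds the number of vertices in non-singleton blocks by $(k-1)(k-2)$ exactly as in Lemma~\ref{lem:confine}; and the appeal to the depth-one criterion, Corollary~\ref{cor:chassaniol}. The separate treatment of small types that you contemplate is unnecessary: $k=|E|$ is even, and for $k=2$ the same count gives $(k-1)(k-2)=0$, so every block is already a singleton.
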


For every even $k\ge2$ this improves the threshold $p>6^{\varphi(k)}$ of \cite{BBC07} from
exponential to quadratic in the type: for $k=8$ and $k=10$ the bound $1296$ is replaced by $44$
and $74$. The counting behind it is sharp (Remark~\ref{rem:sharp}). One consequence is that the
finite verification demanded by the old threshold for types $8$ and $10$ --- a tabulation of
cyclotomic numbers for $98$ pairs $(p,E)$ --- disappears, leaving eight pairs in total for types
at most $10$, all of which are resolved by hand in Section~\ref{sec:certificates}.

The second improvement concerns the depth of the capture. For four of those eight pairs the
depth-one criterion fails; for these we exhibit explicit point masses obtained after two or
three convolutions, each verification being a list of at most thirty additions in $\Zp$.

\begin{theoremletter}[$=$ Theorems~\ref{thm:3141} and \ref{thm:type10}]\label{thm:D}
The graphs $C_{31}(2,4,8,15)$ and $C_{41}(4,10,16,18)$ have no quantum symmetry. Consequently
every circulant graph of prime order and type at most $10$ has no quantum symmetry unless it is
complete or empty.
\end{theoremletter}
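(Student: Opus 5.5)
The plan has two parts: exhibit one point mass in $\Rmod(0,1)$ for each of the two graphs, and then combine this with Theorems~\ref{thm:B}, \ref{thm:A} and \ref{thm:Q} to get the classification.

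\textbf{The two graphs.} For $C_{31}(2,4,8,15)$ the connection set is the group of cubic residues, so $E$ has order $10$ and $r=3$. For $C_{41}(4,10,16,18)$ it is the group of quartic residues, so $|E|=10$ and $r=4$. By Theorem~\ref{thm:B} it suffices to find a single $z\notin\{0,1\}$ with $\delta_z\in\Rmod(0,1)$; then $\Rmod(0,1)=\C^{p}$, and Theorem~\ref{thm:A} gives commutativity of $A(X)$.

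\textbf{Constructing the point mass.} We start from the depth-one vectors. These are $E^*_s(0)E^*_t(1)\mathbf 1=\chi_{C_s\cap(1+C_t)}$, together with their images under $T_u$ and under products with further dual idempotents. The depth-one criterion (Corollary~\ref{thm:C}) fails here: every cyclotomic number $|C_s\cap(1+C_t)|$ is at least $2$. So we must go deeper.

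For each graph I would first tabulate the blocks $B_{s,t}=C_s\cap(1+C_t)$ explicitly. These are small sets, of size roughly $p/r^2$, so about $3$ for $p=31$ and about $2$--$3$ for $p=41$. I would then search for a composite operation that isolates a single vertex. The typical move is to take a block $B$ and a class $C_u$ such that, among $b\in B$, only one satisfies $b+C_u\ni c$ for a suitably chosen $c$. Concretely, we apply $T_u$ to $\chi_B$, multiply by a dual idempotent $E^*_{s'}(0)$ or $E^*_{t'}(1)$ to cut down the support, and iterate until the support is a singleton.

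Each step only requires checking memberships $x-y\in C_u$. So the certificate is a short list of additions modulo $p$, recorded as a chain of two or three such convolutions for each graph. In $\Rmod$ one may also use the reachable $(2,1)$-morphisms. For example, multiplication of two reachable vectors is available, since entrywise products correspond to composing with the multiplication map in the intertwiner category. Entrywise products of two depth-one vectors $\chi_{B}\cdot T_u\chi_{B'}$ give extra intersecting power, and I would try this first if pure Terwilliger words stall.

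\textbf{Classification and main obstacle.} For type $k\le10$, Theorem~\ref{thm:Q} settles all $p>74$. For $p\le 74$ the remaining pairs $(p,E)$ are handled as follows. Those with a cyclotomic number equal to $1$ are handled by Corollary~\ref{thm:C}, by the cited results of Chassaniol for $k\le8$, or by certificates of the same kind; according to the introduction only eight pairs survive in total. Together with the reduction to generalized Paley graphs \cite[Prop.~6.11]{Cha19}, including the complete/empty exception, this covers type at most $10$. The main obstacle is the actual finding of the short isolating chain for $p=41$: there the blocks are smallest and most symmetric, so naive one-step cuts always leave two survivors. I expect to need the third convolution, possibly exploiting the symmetry $h\mapsto 1-h$ to break ties.
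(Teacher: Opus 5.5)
Your overall plan matches the paper's. For the two named graphs you capture one point mass and apply the dichotomy (Theorem~\ref{thm:B}). For the classification you use the quadratic threshold plus a short residual list. The gap is that the proposal stops where the proof has to begin.

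For $C_{31}(2,4,8,15)$ and $C_{41}(4,10,16,18)$ you describe a search but never exhibit a vector. The dichotomy allows $K=\{0,1\}$ a priori, so the existence of one point mass is the entire content of the first sentence of the statement. Your forecast for $p=41$ is also off: both captures occur at depth two, after a single convolution.

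For $p=31$ take $B=C_{3}\cap(1+C_{1})=\{5,9\}$ and $D=C_{2}\cap(1+C_{3})=\{6,12,14,19\}$. Listing $5+C_{1}$ and $9+C_{1}$ shows that $T_{1}\chi_{B}$ is $1$ at $6$ and $0$ at $12,14,19$. Hence $\delta_{6}=\chi_{D}\odotp T_{1}\chi_{B}\in W(0,1)$.

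For $p=41$ take $B=C_{3}\cap(1+C_{2})=\{3,34\}$ and $D=C_{2}\cap(1+C_{1})=\{2,5,32\}$. Here $2\in3+C_{1}$, while neither $3+C_{1}$ nor $34+C_{1}$ contains $5$ or $32$. Hence $\delta_{2}=\chi_{D}\odotp T_{1}\chi_{B}$.

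Multiplying by $\chi_{D}$ is legitimate because $\mathrm{diag}(\chi_{D})=E^{*}_{s}(0)E^{*}_{t}(1)\in\TA(0,1)$. Your broader claim, that the entrywise product of two arbitrary reachable vectors is reachable, is not justified. The category has no flip, so there is no general way to turn $e_{0}\otimes e_{1}$ into $e_{0}\otimes e_{1}\otimes e_{0}\otimes e_{1}$; you only get the nested $e_{0}\otimes e_{0}\otimes e_{1}\otimes e_{1}$. What is available is multiplication by vectors built from a single outer leg, as in Lemma~\ref{lem:real}, or, once a point is captured, the splicing of Proposition~\ref{prop:cascade}.

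The ``consequently'' part is likewise only an outline. You must list the pairs with $p\le(k-1)(k-2)+2$ and dispose of each. There are none for $k=2,4$; the others are $(13,6)$, $(19,6)$, $(17,8)$, $(41,8)$, $(31,10)$, $(41,10)$, $(61,10)$, $(71,10)$.

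Citing Chassaniol covers $k\le8$. For $p=61$ and $p=71$ of type ten, however, you only say that pairs with a cyclotomic number equal to $1$ are handled by Corollary~\ref{thm:C}, without checking that these pairs have one. They do:
for $p=61$, $E=\langle3\rangle$ and $(8E)\cap(1+2E)=\{8\}$;
for $p=71$, $E=\{1,5,14,17,25,46,54,57,66,70\}$ and $(2E)\cap(1+E)=\{2\}$.

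The paper also avoids the citation for $k\le8$. It uses the singleton blocks $(2E)\cap(1+E)=\{2\}$ at $(19,6)$ and $(4E)\cap(1+8E)=\{12\}$ at $(41,8)$. It gives explicit certificates for $P_{13}$ (depth two) and $P_{17}$ (depth three).
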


The proof is finite and self-contained: its ingredients are Theorems~\ref{thm:B} and
\ref{thm:Q}, four certificates and four singleton blocks. Every step can be verified by hand,
and neither the threshold of \cite{BBC07} nor any machine computation enters.

Beyond type $10$ the quadratic threshold no longer applies --- for a Paley graph $k=(p-1)/2$, so
that $(k-1)(k-2)$ dominates $p$ --- and there we verify the module condition computationally.

\begin{theoremletter}[$=$ Theorem~\ref{thm:sweep} and Corollary~\ref{cor:dichotomy250}]\label{thm:E}
For every prime $5\le p\le250$ and every proper subgroup $E\le\Zpx$ with $-1\in E$ one has
$W(0,1)=\C^{p}$. Consequently a vertex-transitive graph of prime order $p$ with $5\le p\le250$
has quantum symmetry if and only if it is complete or empty. In particular the Paley graph
$P_{p}$ has no quantum symmetry for every prime $p\equiv1\pmod4$ with $p\le241$.
\end{theoremletter}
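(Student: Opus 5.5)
The plan is to prove Theorem~\ref{thm:E} by a finite computation organized around Theorem~\ref{thm:A}, combined with the reduction to generalized Paley graphs and the threshold of Theorem~\ref{thm:Q}. Concretely, I would proceed in three steps.

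\emph{Step 1: reduce the dichotomy to the module condition.} By Turner's theorem a vertex-transitive graph of prime order $p$ is a circulant. By the reduction of \cite[Prop.~6.11]{Cha19} recalled in the introduction, quantum rigidity of a non-complete, non-empty circulant with multiplier group $E$ follows once rigidity is known for the orbital structure attached to a proper subgroup $E\lneq\Zpx$ with $-1\in E$. Theorem~\ref{thm:A} then reduces everything to proving $W(0,1)=\C^{p}$ for each such pair $(p,E)$.

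For the converse direction, complete and empty graphs have $\Qut=S_p^+$, which is noncommutative for $p\ge4$. Primes $p<5$ are excluded from the statement, and small cases are classical anyway.

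\emph{Step 2: the computation.} For each prime $5\le p\le 250$ and each even-order divisor $k=|E|$ of $p-1$ with $k<p-1$, I would compute a basis of $W(0,1)$ by saturation. Start from $\delta_0,\delta_1$ and repeatedly apply the $3(r+1)$ generators $T_s$, $E^*_s(0)$, $E^*_s(1)$, adding new vectors after exact Gaussian elimination over $\Q$ (or over $\F_\ell$ for a large prime $\ell$, which certifies full rank over $\Q$ since the entries are integers). The loop stops when the span is stable or reaches dimension $p$.

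Theorem~\ref{thm:B} gives a much cheaper stopping rule: it suffices to find a single $\delta_z\in W(0,1)\subseteq\Rmod(0,1)$ with $z\ne0,1$. Corollary~\ref{thm:C} already handles all pairs for which some cyclotomic number of order $r$ equals $1$. Theorem~\ref{thm:Q} disposes of all pairs with $p>(k-1)(k-2)+2$. So the machine check is only needed for the remaining dense pairs, chiefly large $k$, including Paley $k=(p-1)/2$.

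For these, I would search for a capture at depth two or three. Alternate products $E^*_s(0)T_uE^*_t(1)$ and similar words applied to $\delta_0,\delta_1$ yield vectors whose supports are intersections of cyclotomic translates. Whenever such a support is a single point $z\notin\{0,1\}$, I would record it as a certificate.

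\emph{Step 3: conclude.} Each certificate gives $K=\Zp$ by Theorem~\ref{thm:B}, hence $W(0,1)=\C^p$. Theorem~\ref{thm:A} then gives commutativity of $A(X)$. For Paley graphs, $E$ is the group of squares and $p\equiv1\pmod 4$ ensures $-1\in E$; the largest such prime below $250$ is $241$.

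The main obstacle is the Paley and near-Paley regime, where $r$ is tiny: for Paley, $r=2$, so there are only nine generators. There, sets of the form $C_s\cap(1+C_t)$ have size about $p/4$, and isolating a single point may require deep words. I would first try full linear-algebra saturation rather than support-intersection tricks, since linear combinations can isolate a point even when no support intersection is a singleton. Certifying rank $p$ modulo one large prime keeps the cost at roughly $O(p^3)$ per pair, which is trivial for $p\le250$.
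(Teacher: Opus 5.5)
Your primary computation is exactly the paper's proof of the first sentence: saturate $\delta_{0},\delta_{1}$ under the $3(r+1)$ generators of $\TA(0,1)$ until the span reaches dimension $p$, working exactly or modulo a large prime, with integrality lifting full rank to $\Q$ (this lifting is Lemma~\ref{lem:modq}). Your derivation of the dichotomy and the Paley case also matches Corollary~\ref{cor:dichotomy250}.

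The gap is in the shortcuts you layer on top. Theorem~\ref{thm:capture}, Corollary~\ref{cor:chassaniol} and Theorem~\ref{thm:threshold} give $\Rmod(0,1)=\C^{p}$, not $W(0,1)=\C^{p}$. The splicing behind Theorem~\ref{thm:capture} (Proposition~\ref{prop:cascade}) produces vectors $w_{1}\odotp w_{2}$ through morphisms acting on both tensor legs. Such vectors need not be obtainable by $\TA(0,1)$ acting on $\delta_{0},\delta_{1}$, and the paper says explicitly that it does not know whether $\Rmod(0,1)=W(0,1)$. So your step ``each certificate gives $K=\Zp$ by Theorem~B, hence $W(0,1)=\C^{p}$'' does not follow. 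For every pair you dispose of by the quadratic threshold, by a singleton block, or by halting at the first capture, you have proved rigidity but not the first assertion of the statement. The graph-theoretic conclusions do survive, since the criterion holds verbatim with $\Rmod$ in place of $W$. To obtain $W(0,1)=\C^{p}$ as stated, however, you must carry the saturation to dimension $p$ for every one of the $214$ pairs. That is what the paper does: over $\Q$ for $p\le60$, and over $\F_{q}$ with $q=1\,000\,003$ for all pairs.

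Note also that your stopping rule is a membership test, and the integrality argument you invoke is one-sided. It shows that a family spans $\Q^{p}$, not that a given vector lies in its rational span. For example, the lattice $\Z(\delta_{z}+\ell\delta_{w})$ reduces modulo $\ell$ to the line spanned by $\delta_{z}$, although $\delta_{z}$ is not in its rational span. So a capture detected during a run over $\F_{\ell}$ certifies nothing until it is rechecked in exact arithmetic. This is why the paper computes Table~\ref{tab:depths} by integer echelon reduction rather than modularly.
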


Finally, Theorem~\ref{thm:B} reduces the general problem to a single existence statement.

\begin{conjectureletter}[Capture conjecture; $=$ Conjecture~\ref{conj:capture}]\label{conj:A}
For every prime $p\ge5$ and every proper subgroup $E\le\Zpx$ with $-1\in E$ there is a vertex
$z\notin\{0,1\}$ with $\delta_{z}\in\Rmod(0,1)$.
\end{conjectureletter}

\noindent By Theorems~\ref{thm:A} and \ref{thm:B} the conjecture implies that a
vertex-transitive graph of prime order $p\ge5$ has quantum symmetry if and only if it is
complete or empty, and in particular it settles every Paley graph of prime order. It is much
weaker than the fullness statement it implies, requiring a single vector rather than a basis;
and in Section~\ref{sec:conj} we sharpen it further to an equivalent statement that names that
vector, the midpoint $2^{-1}$ of the two basepoints.

\subsection*{The role of computation}
Theorem~\ref{thm:E} and its corollary are computer-assisted; no other result depends on them.
Everything in Sections~\ref{sec:prelim}--\ref{sec:certificates}, Theorems~\ref{thm:A},
\ref{thm:B}, \ref{thm:Q} and \ref{thm:D} included, is proved by hand, the only computations being
the finite verifications displayed in the text, each a short list of additions and
multiplications modulo a prime below $72$. Lemma~\ref{lem:modq} records why the modular output of
Section~\ref{sec:sweep} is a proof rather than an estimate.

\subsection*{Method}
The proof of Theorem~\ref{thm:A} is Tannakian, in the framework of \cite{Cha19}. The intertwiner
category of $A(X)$ is generated by the maps $U$, $M$ and the adjacency matrix, and by a spectral
argument valid at prime order (Lemma~\ref{lem:spec}) it contains the entire Bose--Mesner algebra
of the cyclotomic scheme. The new ingredient is a \emph{realization lemma}
(Lemma~\ref{lem:real}): every vector of $W(0,y)$ is the image of the base tensor
$e_{0}\otimes e_{y}$ under an explicit $(2,1)$-morphism. The category has no flip, so tensor
legs cannot be permuted and the proof must track them; the operations that can be realized turn
out to be exactly multiplication by the Bose--Mesner algebra and by the two diagonal algebras at
the basepoints, which is what motivates the definition of $\TA(a,b)$. When the module is full, an
\emph{assembly lemma} (Lemma~\ref{lem:assembly}) builds the flip inside the category out of
point-extraction morphisms, replacing the ad hoc constructions of \cite{Cha19} for $P_{13}$ and
$P_{17}$ by a uniform one.

The proof of Theorem~\ref{thm:B} rests on two further observations. The first is that the
reachable module is equivariant for the \emph{full} affine group $\Zp\rtimes\Zpx$, not merely for
the automorphism group $\Zp\rtimes E$ of the graph (Lemma~\ref{lem:equivariance}); since
$\Zp\rtimes\Zpx$ is sharply $2$-transitive, all the modules $\Rmod(a,b)$ have a common dimension.
The second is the cascade lemma (Proposition~\ref{prop:cascade}), a splicing principle: a
captured point mass $\delta_{z}$ identifies the modules of the two shorter pairs $(a,z)$ and
$(z,b)$ with $\Rmod(a,b)$. Together these imply that each capture yields an \emph{invariance}:
$\Rmod(0,1)$ becomes stable under the multiplication $x\mapsto zx$ and under the affine map
$x\mapsto(1-z)x+z$. The capture set inherits both symmetries, and the group theory of
Section~\ref{sec:capture} completes the argument.

\subsection*{Organization}
Sections~\ref{sec:prelim} and \ref{sec:circulant} collect what we need on intertwiner categories
and on circulants of prime order. Section~\ref{sec:criterion} proves Theorem~\ref{thm:A},
Section~\ref{sec:capture} the capture dichotomy, Section~\ref{sec:threshold} the quadratic
threshold and Section~\ref{sec:certificates} the certificates and Theorem~\ref{thm:D}.
Section~\ref{sec:sweep} describes the computation behind Theorem~\ref{thm:E}, and
Section~\ref{sec:conj} the capture conjecture and its harmonic form.

\section{Preliminaries}\label{sec:prelim}

We work over $\C$, and all tensor products are algebraic. Throughout, $n\ge1$ and $(e_{i})_{i}$
denotes the canonical basis of $\C^{n}$; indices of vertices of circulant graphs are taken in
$\Zp$.

\subsection{Quantum permutation algebras and graphs}
A matrix $u=(u_{ij})\in M_{n}(A)$ over a unital $C^{*}$-algebra is a \emph{magic unitary} if its
entries are projections and every row and every column sums to $1$. Wang's algebra
$A_{s}(n)=C(S_{n}^{+})$ is the universal $C^{*}$-algebra generated by the entries of an
$n\times n$ magic unitary \cite{Wang98}; it is a Woronowicz algebra with
$\Delta(u_{ij})=\sum_{k}u_{ik}\otimes u_{kj}$. For a finite graph $X$ on $n$ vertices with
adjacency matrix $d_{X}$, the quantum permutation algebra of $X$ is
\[
A(X)\;=\;A_{s}(n)\big/\big\langle d_{X}u=ud_{X}\big\rangle,
\]
and $\Qut(X)$ is the compact quantum group with $C(\Qut(X))=A(X)$ \cite{Bic03,Ban05}. The
abelianization of $A(X)$ is $C(\Aut(X))$; in particular there is a surjective morphism of
quantum permutation algebras $\pi\colon A(X)\to C(\Aut(X))$ carrying magic unitary to magic
unitary. One says that $X$ \emph{has no quantum symmetry} if $A(X)$ is commutative, equivalently
if $\Qut(X)=\Aut(X)$.

\subsection{Intertwiner categories}
For a quantum permutation algebra $\mathcal A$ with magic unitary $u\in M_{n}(\mathcal A)$ we set
$C(k,l)=\mathrm{Hom}((\C^{n})^{\otimes k},(\C^{n})^{\otimes l})$ and
\[
C_{\mathcal A}(k,l)\;=\;\{\,T\in C(k,l)\;:\;Tu^{\otimes k}=u^{\otimes l}T\,\},
\]
where $u^{\otimes k}$ is the $n^{k}\times n^{k}$ matrix with entries
$u_{i_{1}j_{1}}\cdots u_{i_{k}j_{k}}$. The collection $C_{\mathcal A}$ is closed under linear
combinations, composition, tensor product and adjoints and contains the identity; by
Woronowicz--Tannaka--Krein duality the assignment $\mathcal A\mapsto C_{\mathcal A}$ is a
bijection onto the tensor categories with duals lying between $C_{A_{s}(n)}$ and
$C(\cdot,\cdot)$ \cite{Wor88,Ban05,Cha19}. We use the standard morphisms
\[
U\in C(0,1),\ U(1)=\textstyle\sum_{i}e_{i};\qquad
M\in C(2,1),\ M(e_{i}\otimes e_{j})=\delta_{ij}e_{i};
\]
\[
\Sigma\in C(2,2),\ \Sigma(e_{i}\otimes e_{j})=e_{j}\otimes e_{i},
\]
their adjoints $U^{*}\in C(1,0)$ and $M^{*}\in C(1,2)$, $M^{*}(e_{i})=e_{i}\otimes e_{i}$, and
$\Mthree:=M\circ(\id\otimes M)=M\circ(M\otimes\id)\in C(3,1)$, so that
$\Mthree(e_{i}\otimes e_{j}\otimes e_{k})=\delta_{ij}\delta_{ik}e_{i}$. For vectors
$v,w\in\C^{n}$ we write $v\odotp w$ for the entrywise product, so that
$M(v\otimes w)=v\odotp w$ and $\Mthree(v\otimes w\otimes z)=v\odotp w\odotp z$.

The following facts are due to Chassaniol \cite[Prop.~5.5, Cor.~5.6, Lem.~5.4]{Cha19}; we
include the arguments for completeness.

\begin{lemma}\label{lem:cat}
Let $X$ be a finite graph on $n$ vertices.
\begin{enumerate}
\item[\textup{(i)}] $C_{A(X)}=\langle U,M,d_{X}\rangle_{+,\circ,\otimes,*}$, the smallest
collection of spaces containing $U$, $M$, $d_{X}$ and closed under linear combinations,
composition, tensor products and adjoints.
\item[\textup{(ii)}] \emph{(Flip criterion)} $A(X)$ is commutative if and only if
$\Sigma\in C_{A(X)}(2,2)$.
\item[\textup{(iii)}] $C_{A(X)}(k,l)\subseteq C_{\Aut(X)}(k,l)$ for all $k,l$, where
$P_{\sigma}$ denotes the permutation matrix of $\sigma$ and
\[
C_{\Aut(X)}(k,l)=\bigl\{T:\ TP_{\sigma}^{\otimes k}=P_{\sigma}^{\otimes l}T\ \text{ for all }
\sigma\in\Aut(X)\bigr\}.
\]
\end{enumerate}
\end{lemma}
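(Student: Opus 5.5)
We prove the three parts separately, relying throughout on Woronowicz--Tannaka--Krein duality as recalled above.

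For (i), we first check that $U$, $M$ and $d_X$ lie in $C_{A(X)}$. The magic-unitary relations say exactly that $U\in C_{A_s(n)}(0,1)$ and $M\in C_{A_s(n)}(2,1)$. Row and column sums equal to $1$ give $u U=U$ and $U^{*}u=U^{*}$. The projection and orthogonality relations $u_{ij}u_{ik}=\delta_{jk}u_{ij}$ and $u_{ij}u_{kj}=\delta_{ik}u_{ij}$ give $Mu^{\otimes2}=uM$. The defining relation $d_Xu=ud_X$ says precisely that $d_X\in C_{A(X)}(1,1)$. Since $C_{A(X)}$ is a tensor category with duals, it contains the category $\mathcal D=\langle U,M,d_X\rangle_{+,\circ,\otimes,*}$. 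For the reverse inclusion, Tannaka--Krein duality produces from $\mathcal D$ a quantum permutation algebra $\mathcal A_{\mathcal D}$ with $C_{\mathcal A_{\mathcal D}}=\mathcal D$. It is generated by the entries of a unitary $v$ satisfying the relations encoded by $\mathcal D$. Because $\mathcal D\supseteq\langle U,M\rangle=C_{A_s(n)}$, the matrix $v$ is magic. Because $d_X\in\mathcal D$, it commutes with $d_X$. Universality of $A(X)$ therefore gives a surjection $A(X)\to\mathcal A_{\mathcal D}$, $u\mapsto v$, and this reverses the inclusion of intertwiner spaces: $C_{A(X)}\subseteq\mathcal D$. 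The main point to get right is that the duality is a lattice anti-isomorphism between quotients of $A_s(n)$ and categories containing $C_{A_s(n)}$. We cite it as stated in the preliminaries.

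For (ii), suppose $A(X)$ is commutative. Then $u_{ij}u_{kl}=u_{kl}u_{ij}$, and applying $\Sigma u^{\otimes2}=u^{\otimes2}\Sigma$ to basis vectors is exactly this commutation, so $\Sigma\in C_{A(X)}(2,2)$. Conversely, if $\Sigma$ is an intertwiner, then comparing the $(e_i\otimes e_k,\,e_j\otimes e_l)$ entries of $\Sigma u^{\otimes 2}$ and $u^{\otimes 2}\Sigma$ gives $u_{kl}u_{ij}=u_{ij}u_{kl}$ for all indices. Hence the generators commute and $A(X)$ is commutative.

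For (iii), we use the surjection $\pi\colon A(X)\to C(\Aut(X))$, which sends $u$ to the magic unitary $(\chi_{\{\sigma:\sigma(j)=i\}})_{ij}$. If $Tu^{\otimes k}=u^{\otimes l}T$, then applying $\pi$ entrywise gives the same identity in $M(C(\Aut(X)))$. Evaluating at $\sigma$ turns $\pi(u)$ into $P_\sigma$, and so $TP_\sigma^{\otimes k}=P_\sigma^{\otimes l}T$ for every $\sigma\in\Aut(X)$. Of the three parts, only (i) involves anything beyond bookkeeping, and there it is the appeal to duality.
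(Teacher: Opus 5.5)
Your proof is correct and follows the paper's argument in all three parts: (i) appeals to Woronowicz--Tannaka--Krein duality, and you only make the reverse inclusion explicit via the surjection $A(X)\to\mathcal A_{\mathcal D}$; (ii) is the same entrywise comparison, and (iii) applies $\pi$ entrywise and then evaluates at $\sigma$. One cosmetic slip: the $(e_i\otimes e_k,e_j\otimes e_l)$ entries actually give $u_{kj}u_{il}=u_{il}u_{kj}$, which yields the same conclusion since the indices are arbitrary.
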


\begin{proof}
(i) Write $\mathcal Q:=\langle U,M,d_{X}\rangle_{+,\circ,\otimes,*}$. The defining relations of
$A(X)$ --- magic unitarity of the rows and columns, orthogonality of the entries, and
$ud_{X}=d_{X}u$ --- say precisely that $U,U^{*},M,M^{*}\in C_{A(X)}$ and
$d_{X}\in C_{A(X)}(1,1)$, whence $\mathcal Q\subseteq C_{A(X)}$. Conversely $\mathcal Q$ is a
tensor category with duals containing $C_{A_{s}(n)}=\langle U,M\rangle_{+,\circ,\otimes,*}$, so
under the bijection of Woronowicz--Tannaka--Krein duality recalled above it is the intertwiner
category of the quotient of $A_{s}(n)$ by the relations asserting that its generators are
intertwiners. For the generator $d_{X}$ those relations read $ud_{X}=d_{X}u$, so that quotient is
$A(X)$ and $\mathcal Q=C_{A(X)}$. (ii) A direct computation gives
$[\Sigma u^{\otimes2}]_{(i,j),(k,l)}=u_{jk}u_{il}$ and
$[u^{\otimes2}\Sigma]_{(i,j),(k,l)}=u_{il}u_{jk}$; hence $\Sigma\in C_{A(X)}$ if and only if all
entries of $u$ commute, that is, if and only if $A(X)$
is commutative. (iii) Applying the abelianization $\pi$ entrywise to
$Tu^{\otimes k}=u^{\otimes l}T$ yields the same relation for the magic unitary of $C(\Aut(X))$,
whose intertwiners are precisely the $\Aut(X)$-equivariant maps.
\end{proof}

\section{Circulant graphs of prime order}\label{sec:circulant}

Fix a prime $p\ge5$. A \emph{circulant $p$-graph} is $X=\Cay(\Zp,S)$ with
$S=-S\subseteq\Zp\setminus\{0\}$; it is \emph{nontrivial} if
$S\notin\{\emptyset,\Zp\setminus\{0\}\}$, that is, if $X$ is neither empty nor complete. Its
\emph{multiplier group} is $E=E(S)=\{a\in\Zpx:\ aS=S\}$. Since $S=-S$ we have $-1\in E$, so the
\emph{type} $k:=|E|$ is even; and $X$ is nontrivial precisely when $E\ne\Zpx$. As $E$ stabilizes
$S$ and the orbits of $E$ on $\Zp\setminus\{0\}$ are the cosets of $E$, the set $S$ is a union of
$E$-cosets. By Turner's theorem \cite{Tur67} every vertex-transitive graph of prime order is
isomorphic to a circulant $p$-graph. The affine maps $\alpha_{c,a}\colon x\mapsto ax+c$ with
$a\in E$ and $c\in\Zp$ are automorphisms of $X$; by Alspach's theorem \cite{Als73} they exhaust
$\Aut(X)$ for nontrivial $X$. That classification is quoted for orientation only: every argument
below uses nothing beyond the elementary containment $\Zp\rtimes E\subseteq\Aut(X)$.

Let $C_{1},\dots,C_{r}$, with $r=(p-1)/k$, be the cosets of $E$ in $\Zpx$. We fix once and for
all the labelling convention that $C_{1}=E$ and that, for $s\ge2$, the coset $C_{s}$ is the one
containing the least element of $\Zpx\setminus(C_{1}\cup\dots\cup C_{s-1})$; we write $y_{s}$
for that least element, so that $y_{1}=1$. Define the class matrices $T_{0}=I$ and
$(T_{s})_{x,y}=\delta_{x-y\in C_{s}}$ for $1\le s\le r$. Equivalently $T_{s}$ is convolution by
$\chi_{C_{s}}$ on $\C[\Zp]$, that is $(T_{s}v)(x)=\sum_{c\in C_{s}}v(x-c)$; here
$T_{s}^{*}=T_{s}$ because $-C_{s}=C_{s}$. Their span
\[
\BM\;:=\;\mathrm{span}\{T_{0},T_{1},\dots,T_{r}\}
\]
is the Bose--Mesner algebra of the cyclotomic translation scheme of $E$; it is a commutative
unital $*$-algebra of dimension $r+1$ containing the all-ones matrix $J=\sum_{s=0}^{r}T_{s}$ and
containing $d_{X}=\sum_{s:\,C_{s}\subseteq S}T_{s}$.

\begin{lemma}[Spectral separation at prime order; cf.\ {\cite[Lem.~4.1]{BBC07}}]\label{lem:spec}
Let $X$ be a nontrivial circulant $p$-graph with multiplier group $E$. For $x\in\Zp$ let
$\xi_{x}=(w^{xt})_{t\in\Zp}$ with $w=e^{2\pi i/p}$, an eigenvector of $d_{X}$ with eigenvalue
$f(x)=\sum_{t\in S}w^{xt}$. Then for $x,y\in\Zp\setminus\{0\}$,
\[
f(x)=f(y)\iff xS=yS\iff x^{-1}y\in E,\qquad\text{and}\qquad f(x)\neq f(0)=|S|.
\]
Consequently $d_{X}$ has exactly $r+1$ distinct eigenvalues, and its nontrivial eigenspaces are
the spaces $V_{C}=\mathrm{span}\{\xi_{x}:x\in C\}$ indexed by the cosets $C$ of $E$, each of
dimension $k$.
\end{lemma}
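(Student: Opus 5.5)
The approach is Galois-theoretic, working in the cyclotomic field $\Q(w)$ and using the fact that $S$ is a union of $E$-cosets. The plan has three steps.

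\textbf{Step 1: easy directions.} For $a\in\Zpx$ let $\sigma_{a}\in\mathrm{Gal}(\Q(w)/\Q)$ be the automorphism $w\mapsto w^{a}$. Then $f(x)=\sum_{t\in S}w^{xt}$ satisfies $\sigma_{a}(f(x))=f(ax)$, and $f(x)$ depends only on the set $xS$. So $x^{-1}y\in E$ implies $yS=x(x^{-1}y)S=xS$, which gives $f(x)=f(y)$. The chain $xS=yS\Rightarrow x^{-1}y\in E$ holds by the definition of $E$: from $xS=yS$ we get $x^{-1}yS=S$.

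\textbf{Step 2: $f(x)=f(y)\Rightarrow xS=yS$.} This is the key point. Write $f(x)=\sum_{t\in xS}w^{t}$. Suppose $f(x)=f(y)$. Then
\[
\sum_{t\in xS}w^{t}-\sum_{t\in yS}w^{t}=0,
\]
which is a relation $\sum_{t\in\Zp}c_{t}w^{t}=0$ with $c_{t}\in\{-1,0,1\}$ and $c_{0}=0$. The minimal polynomial of $w$ is $1+X+\dots+X^{p-1}$, so the only $\Q$-linear relations among $1,w,\dots,w^{p-1}$ are multiples of $\sum_{t}w^{t}=0$. Hence all $c_{t}$ are equal, and since $c_{0}=0$ they all vanish. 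Therefore $xS=yS$.

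\textbf{Step 3: $f(x)\neq f(0)=|S|$, and the eigenvalue count.} For the inequality, the same relation argument applies to $f(x)-|S|\cdot w^{0}=0$. Here $c_{0}=-|S|$, while $c_{t}\in\{0,1\}$ for $t\neq0$, so equality of all coefficients would force $-|S|=0$ or $-|S|=1$. The value $-|S|=0$ is excluded by nontriviality. (Alternatively, $|f(x)|<|S|$ unless all the $w^{xt}$ coincide, which is impossible for $|S|\ge 2$; the case $|S|=1$ cannot occur since $S=-S$ and $p$ is odd.)

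For the consequences, the $\xi_{x}$ form an orthogonal eigenbasis. By the equivalences above, the eigenvalue $f(x)$ is constant exactly on the cosets of $E$ in $\Zpx$ and differs from $f(0)$. This gives $r+1$ distinct eigenvalues, with eigenspaces $\C\xi_{0}$ and $V_{C}$ of dimension $|C|=k$. The only delicate step is the linear-independence argument of Step 2, which is where primality of $p$ is used.
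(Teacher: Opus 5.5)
Your proposal is correct and follows essentially the same route as the paper. Step 2 is the same argument via the minimal polynomial $1+t+\dots+t^{p-1}$ of $w$: you kill the common coefficient using $c_{0}=0$, where the paper uses $\sum_{j}c_{j}=|xS|-|yS|=0$. The Galois-automorphism framing is never actually needed. For $f(x)\neq|S|$ you reuse the same linear-relation argument, whereas the paper observes directly that equality would force $w^{xt}=1$ for every $t\in S$, which is impossible since $S\neq\emptyset$.
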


\begin{proof}
The minimal polynomial of $w$ over $\Q$ is $1+t+\dots+t^{p-1}$, so a rational relation
$\sum_{j=0}^{p-1}c_{j}w^{j}=0$ holds if and only if $(c_{j})_{j}$ is a constant vector. Now
$f(x)-f(y)=\sum_{j}c_{j}w^{j}$ with $c_{j}=\chi_{xS}(j)-\chi_{yS}(j)\in\{-1,0,1\}$, the sets
$xS$ and $yS$ being genuine sets since $x$ and $y$ are invertible. If $f(x)=f(y)$, all the
$c_{j}$ are equal, and since $\sum_{j}c_{j}=|xS|-|yS|=0$ they all vanish; thus $xS=yS$, that is
$y^{-1}xS=S$, that is $x^{-1}y\in E$. Conversely $f(xa)=\sum_{t\in S}w^{xat}=\sum_{t'\in
aS}w^{xt'}=f(x)$ for $a\in E$. Finally $f(x)=|S|$ with $x\ne0$ would force $w^{xt}=1$ for all
$t\in S$, which is impossible since $S\ne\emptyset$. The last assertion follows because $f$ is
constant on each coset of $E$ and takes distinct values on distinct cosets.
\end{proof}

\begin{proposition}[The category depends only on $(p,E)$]\label{prop:Eonly}
Let $X$ be a nontrivial circulant $p$-graph with multiplier group $E$. Then
\[
\C[d_{X}]\;=\;\BM,\qquad\text{and}\qquad
C_{A(X)}\;=\;\big\langle\,U,M,T_{1},\dots,T_{r}\,\big\rangle_{+,\circ,\otimes,*}.
\]
In particular $C_{A(X)}$, hence the commutativity of $A(X)$, depends only on the pair $(p,E)$.
\end{proposition}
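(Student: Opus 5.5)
The plan has two parts. First we show $\C[d_{X}]=\BM$ by spectral theory. Then we show that the category generated by $d_X$ coincides with the one generated by the $T_s$.

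For the first equality, note that $d_{X}=\sum_{s:\,C_{s}\subseteq S}T_{s}\in\BM$, and $\BM$ is an algebra, so $\C[d_{X}]\subseteq\BM$. For the reverse inclusion we use Lemma~\ref{lem:spec}. The matrix $d_{X}$ is normal, indeed symmetric, and has exactly $r+1$ distinct eigenvalues. Its eigenspaces are $V_{0}=\C\xi_{0}$ and the spaces $V_{C}$ for $C$ a coset of $E$. Hence the spectral projections $P_{0},P_{C}$ onto these eigenspaces are Lagrange interpolation polynomials in $d_{X}$. In particular they lie in $\C[d_{X}]$, and $\dim\C[d_{X}]=r+1$, since the minimal polynomial of a diagonalizable matrix has degree equal to the number of distinct eigenvalues.

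Next, each $T_{s}$ is convolution by $\chi_{C_{s}}$, so it is diagonalized by the characters $\xi_{x}$. The eigenvalue of $T_s$ on $\xi_{x}$ is $\sum_{c\in C_{s}}w^{\pm xc}$. Since $C_{s}$ is $E$-stable, this value is constant as $x$ ranges over a coset of $E$. Thus $T_{s}=\lambda_{s,0}P_{0}+\sum_{C}\lambda_{s,C}P_{C}$, which lies in $\C[d_{X}]$. This gives $\BM\subseteq\C[d_{X}]$. Alternatively, $\BM$ is an $(r+1)$-dimensional algebra containing $\C[d_{X}]$, which also has dimension $r+1$, and equality follows.

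For the category statement we invoke Lemma~\ref{lem:cat}(i): $C_{A(X)}=\langle U,M,d_{X}\rangle_{+,\circ,\otimes,*}$. Each $T_{s}$ is a polynomial in $d_{X}$, that is, a linear combination of iterated compositions of $d_{X}$ (with $T_0=I$ the identity morphism). So $T_{s}\in C_{A(X)}(1,1)$, and therefore $\langle U,M,T_{1},\dots,T_{r}\rangle\subseteq C_{A(X)}$. Conversely, $d_{X}$ is a linear combination of the $T_{s}$, which gives the reverse inclusion. The right-hand side involves only $p$ and the cosets of $E$. Consequently $C_{A(X)}$ depends only on $(p,E)$. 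By Lemma~\ref{lem:cat}(ii), commutativity of $A(X)$ is equivalent to $\Sigma\in C_{A(X)}(2,2)$, so it too depends only on $(p,E)$.

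The only step with real content is the dimension count, and it rests entirely on Lemma~\ref{lem:spec}. That lemma shows that distinct cosets give distinct eigenvalues and that $f(0)$ differs from the rest, so no two spectral projections of $\BM$ merge in $d_{X}$. Everything else is formal.
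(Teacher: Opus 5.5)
Your proposal is correct and is essentially the paper's proof. The inclusion $\C[d_{X}]\subseteq\BM$ is immediate, and the reverse inclusion comes from Lemma~\ref{lem:spec} via the count $\dim\C[d_{X}]=r+1=\dim\BM$, which is your ``alternative''; the spectral-projection expression of each $T_{s}$ that you give first is a more explicit form of the same fact. The category identity then follows from Lemma~\ref{lem:cat}(i) exactly as you describe.
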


\begin{proof}
Since $\BM$ is a unital algebra containing $d_{X}$ we have $\C[d_{X}]\subseteq\BM$. The matrix
$d_{X}$ is real symmetric, hence diagonalizable, so the dimension of $\C[d_{X}]$ equals the
number of its distinct eigenvalues, namely $r+1=\dim\BM$ by Lemma~\ref{lem:spec}; hence
$\C[d_{X}]=\BM$. Now $d_{X}\in C_{A(X)}(1,1)$ by definition and $C_{A(X)}(1,1)$ is a unital
subalgebra of $M_{p}(\C)$, so $\BM=\C[d_{X}]\subseteq C_{A(X)}(1,1)$ and therefore
$\langle U,M,\BM\rangle\subseteq C_{A(X)}=\langle U,M,d_{X}\rangle$ by
Lemma~\ref{lem:cat}(i); the reverse containment follows from $d_{X}\in\BM$. The final statement
holds because the generating set just exhibited depends only on $(p,E)$.
\end{proof}

\begin{remark}\label{rem:GP}
Proposition~\ref{prop:Eonly}, a form of \cite[Prop.~6.11]{Cha19} re-derived here without recourse
to the classification of $\Aut(X)$, reduces the quantum-symmetry problem for all circulant
$p$-graphs to the single family of \emph{generalized Paley graphs}
$X_{E}:=\Cay(\Zp,E)$ with $E\lneq\Zpx$ and $-1\in E$; for the classical theory of these graphs
and their automorphism groups see \cite{LP09}. For $E$ the group of quadratic residues one
recovers the Paley graph $P_{p}$. The two graphs left open in \cite[\S6.4]{Cha19} are exactly
$X_{E}$ for $E$ the group of cubic residues modulo $31$ and the group of quartic residues modulo
$41$: with the convention $C_{n}(a_{1},\dots,a_{m})\leftrightarrow
S=\{\pm1,\pm a_{1},\dots,\pm a_{m}\}$ one checks that
\[
\{\pm1,\pm2,\pm4,\pm8,\pm15\}=\langle 2,-1\rangle=(\Z_{31}^{\times})^{3},\qquad
\{\pm1,\pm4,\pm10,\pm16,\pm18\}=\langle 4\rangle=(\Z_{41}^{\times})^{4},
\]
the unique subgroups of order $10$ in the respective multiplicative groups.
\end{remark}

We close the section with an elementary equivariance statement that will let us dispense with
Proposition~\ref{prop:Eonly} in the proof of the assembly lemma. Write
$\cat:=\langle U,M,T_{1},\dots,T_{r}\rangle_{+,\circ,\otimes,*}$ for the category appearing in
Proposition~\ref{prop:Eonly}, and $G:=\Zp\rtimes E$.

\begin{lemma}\label{lem:Gequiv}
For every $\sigma\in G$ and all $\ell,m\ge0$, every $A\in\cat(\ell,m)$ satisfies
$A\,P_{\sigma}^{\otimes \ell}=P_{\sigma}^{\otimes m}A$.
\end{lemma}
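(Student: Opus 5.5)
The plan is to show that the set of all morphisms $A$ (of every arity $(\ell,m)$) satisfying $A\,P_{\sigma}^{\otimes\ell}=P_{\sigma}^{\otimes m}A$ for every $\sigma\in G$ is closed under the category operations, and that it contains the generators $U$, $M$, $T_{1},\dots,T_{r}$. Since $\cat$ is the smallest collection with these properties, it then lies inside this equivariant collection. I want to avoid Lemma~\ref{lem:cat}(iii) and Alspach's theorem, so the argument should use only the fact that $P_{\sigma}$ is a permutation matrix and that $\sigma$ is affine with multiplier in $E$.

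Closure is routine:
\begin{itemize}
\item Linear combinations are clear.
\item For composition, if $A\in C(\ell,m)$ and $B\in C(m,q)$ are equivariant, then $BAP_{\sigma}^{\otimes\ell}=BP_{\sigma}^{\otimes m}A=P_{\sigma}^{\otimes q}BA$.
\item For tensor products, use $(A\otimes B)(P_{\sigma}^{\otimes\ell}\otimes P_{\sigma}^{\otimes\ell'})=(AP_{\sigma}^{\otimes\ell})\otimes(BP_{\sigma}^{\otimes\ell'})$.
\item For adjoints, $P_{\sigma}$ is unitary with $P_{\sigma}^{*}=P_{\sigma^{-1}}$ and $\sigma^{-1}\in G$. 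Taking adjoints of $AP_{\sigma^{-1}}^{\otimes\ell}=P_{\sigma^{-1}}^{\otimes m}A$ gives $P_{\sigma}^{\otimes\ell}A^{*}=A^{*}P_{\sigma}^{\otimes m}$, which is exactly equivariance of $A^{*}$.
\end{itemize}

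Now the generators, with $P_{\sigma}e_{i}=e_{\sigma(i)}$:
\begin{itemize}
\item For $U$ (the case $\ell=0$, with $P_{\sigma}^{\otimes0}=1$), we have $P_{\sigma}U(1)=\sum_{i}e_{\sigma(i)}=U(1)$, because $\sigma$ is a bijection.
\item For $M$, $MP_{\sigma}^{\otimes2}(e_{i}\otimes e_{j})=\delta_{\sigma(i)\sigma(j)}e_{\sigma(i)}=\delta_{ij}e_{\sigma(i)}=P_{\sigma}M(e_{i}\otimes e_{j})$, by injectivity of $\sigma$.
\item For $T_{s}$, it suffices to check $(T_{s})_{\sigma(x),\sigma(y)}=(T_{s})_{x,y}$. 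With $\sigma(x)=ax+c$, $a\in E$, we get $\sigma(x)-\sigma(y)=a(x-y)$. Since $C_{s}$ is a coset of $E$, $aC_{s}=C_{s}$, so $a(x-y)\in C_{s}$ if and only if $x-y\in C_{s}$. Hence $P_{\sigma}^{*}T_{s}P_{\sigma}=T_{s}$, which is the required identity.
\end{itemize}

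No step here is a genuine obstacle. The only points needing care are the adjoint step, which needs $G$ closed under inverses (it is a group), and the degenerate arity $\ell=0$ for $U$.
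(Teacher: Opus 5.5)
Your proof is correct and follows the paper's argument essentially verbatim: you show that the $G$-equivariant morphisms are closed under the category operations, then check the generators $U$, $M$ and $T_{s}$ using only that $\sigma$ is a bijection and that $aC_{s}=C_{s}$ for $a\in E$. You are slightly more explicit than the paper on the adjoint step, where you spell out that closure of $G$ under inverses is what is used.
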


\begin{proof}
The spaces $C_{G}(\ell,m):=\{A:\ AP_{\sigma}^{\otimes \ell}=P_{\sigma}^{\otimes m}A\text{ for
all }\sigma\in G\}$ are closed under linear combinations, composition, tensor products and adjoints
and contain the identity, so it suffices to check the generators. For a permutation matrix
$P_{\sigma}$ one has $P_{\sigma}U=U$, since $U(1)$ is the all-ones vector, and
$MP_{\sigma}^{\otimes2}(e_{i}\otimes e_{j})=\delta_{ij}e_{\sigma(i)}
=P_{\sigma}M(e_{i}\otimes e_{j})$; hence $U,M\in C_{G}$. For $\sigma=\alpha_{c,a}$ with $a\in E$ one computes
$(P_{\sigma}T_{s}P_{\sigma}^{-1})_{x,y}=(T_{s})_{\sigma^{-1}(x),\sigma^{-1}(y)}
=\delta_{a^{-1}(x-y)\in C_{s}}=\delta_{x-y\in aC_{s}}=(T_{s})_{x,y}$, since $aC_{s}=C_{s}$ for
$a\in E$. Hence $T_{s}\in C_{G}(1,1)$.
\end{proof}

\section{The two-basepoint module and the criterion}\label{sec:criterion}

Throughout this section $p\ge5$ is prime, $E\le\Zpx$ is a subgroup with $-1\in E$, and the
classes $C_{s}$, representatives $y_{s}$, matrices $T_{s}$, algebra $\BM$ and category $\cat$
are as in Section~\ref{sec:circulant}. By Proposition~\ref{prop:Eonly}, $\cat=C_{A(X)}$ for
every nontrivial circulant $p$-graph $X$ with multiplier group $E$; such an $X$ exists as soon
as $E$ is proper, namely $X=X_{E}$, whose multiplier group is $E$ because $E$ is a subgroup. We
record once and for all that $\cat(1,1)$ is a linear space containing $T_{0}=\id$ and every
$T_{s}$, so that
\begin{equation}\label{eq:BMincat}
\BM\subseteq\cat(1,1),\qquad\text{and in particular }J=\textstyle\sum_{s=0}^{r}T_{s}\in\cat(1,1).
\end{equation}

\begin{definition}\label{def:twobase}
For a vertex $a\in\Zp$ and $0\le s\le r$ put
$E^{*}_{s}(a):=\mathrm{diag}(\chi_{a+C_{s}})\in M_{p}(\C)$, with $C_{0}=\{0\}$; thus
$\sum_{s=0}^{r}E^{*}_{s}(a)=I$. For an ordered pair $(a,b)$ of distinct vertices the
\emph{two-basepoint Terwilliger algebra} is
\[
\TA(a,b)\;:=\;\big\langle\,\BM,\ E^{*}_{0}(a),\dots,E^{*}_{r}(a),\ E^{*}_{0}(b),\dots,
E^{*}_{r}(b)\,\big\rangle\subseteq M_{p}(\C),
\]
and the \emph{two-basepoint module} is
$W(a,b):=\TA(a,b)\,\delta_{a}+\TA(a,b)\,\delta_{b}\subseteq\C^{p}$, where $\delta_{x}=e_{x}$.
\end{definition}

Writing $\TA(a):=\langle\BM,E^{*}_{0}(a),\dots,E^{*}_{r}(a)\rangle$ for the classical
Terwilliger algebra of the cyclotomic scheme at $a$ \cite{Ter92,IIY99}, we have
$\TA(a,b)=\langle\TA(a),\TA(b)\rangle$. All generators of $\TA(a,b)$ are self-adjoint, so
$\TA(a,b)$ is a $*$-algebra and $W(a,b)$ is a $\TA(a,b)$-submodule of $\C^{p}$. The definitions
are symmetric in the two basepoints, so $\TA(a,b)=\TA(b,a)$ and $W(a,b)=W(b,a)$.

We record the partition of $\Zp$ induced by the two diagonal families.

\begin{definition}\label{def:blocks}
For $0\le s,t\le r$ set $B_{s,t}:=(0+C_{s})\cap(1+C_{t})$. The nonempty $B_{s,t}$ form a
partition $\mathcal P$ of $\Zp$, the \emph{basepoint partition}. Its blocks other than
$B_{0,1}=\{0\}$ and $B_{1,0}=\{1\}$ are the sets $C_{s}\cap(1+C_{t})$ with $1\le s,t\le r$, and
these partition $\Zp\setminus\{0,1\}$. One has
$E^{*}_{s}(0)E^{*}_{t}(1)=\mathrm{diag}(\chi_{B_{s,t}})$, so multiplication by the indicator of
any block of $\mathcal P$ belongs to $\TA(0,1)$; and $\chi_{B_{s,t}}\in W(0,1)$ for all $s,t$,
because $\chi_{B_{s,t}}=E^{*}_{t}(1)T_{s}\delta_{0}$.
\end{definition}

Indeed $B_{0,t}=\{0\}\cap(1+C_{t})$ is nonempty only for $t=1$, since $-1\in E=C_{1}$, and
$B_{s,0}=C_{s}\cap\{1\}$ only for $s=1$. The cardinalities $|C_{s}\cap(1+C_{t})|$ are the
classical \emph{cyclotomic numbers} of order $r$ for $p$; this is the arithmetic input at depth
one, and it is the only input used by the criterion of \cite{Cha19}.

\begin{lemma}[Affine equivariance]\label{lem:equivariance}
For $c\in\Zp$ let $P_{\tau_{c}}$ be the permutation matrix of $x\mapsto x+c$, and for
$u\in\Zpx$ let $R_{u}$ be the permutation matrix of $x\mapsto ux$. Then
\[
P_{\tau_{c}}\,\TA(a,b)\,P_{\tau_{c}}^{-1}=\TA(a+c,b+c),\qquad
R_{u}\,\TA(a,b)\,R_{u}^{-1}=\TA(ua,ub),
\]
and correspondingly $P_{\tau_{c}}W(a,b)=W(a+c,b+c)$ and $R_{u}W(a,b)=W(ua,ub)$. Moreover, for
every $\alpha$ in the affine group $\AGL(1,p)=\Zp\rtimes\Zpx$ and every $A\in\cat(\ell,m)$ one
has $P_{\alpha}^{\otimes m}\,A\,(P_{\alpha}^{-1})^{\otimes \ell}\in\cat(\ell,m)$. Consequently
$\dim W(a,b)$ is the same for every ordered pair of distinct vertices; in particular
$W(0,1)=\C^{p}$ implies $W(0,y)=\C^{p}$ for every $y\ne0$.
\end{lemma}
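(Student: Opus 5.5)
The plan is to prove the conjugation identities on generators, deduce the module statements, and then treat the categorical claim separately.

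\emph{Translations.} $P_{\tau_c}$ commutes with every $T_s$, because $T_s$ is convolution by $\chi_{C_s}$ and so depends only on $x-y$. For the diagonal idempotents, conjugating a diagonal matrix $\mathrm{diag}(f)$ by $P_{\tau_c}$ gives $\mathrm{diag}(f(\cdot-c))$. Applied to $f=\chi_{a+C_s}$ this yields $P_{\tau_c}E^*_s(a)P_{\tau_c}^{-1}=E^*_s(a+c)$. Since conjugation is an algebra automorphism, the image of the generating set of $\TA(a,b)$ is exactly the generating set of $\TA(a+c,b+c)$.

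\emph{Dilations.} For $u\in\Zpx$ the same index computation as in Lemma~\ref{lem:Gequiv} gives $(R_uT_sR_u^{-1})_{x,y}=\delta_{x-y\in uC_s}$. Since $u$ permutes the cosets of $E$, this matrix is $T_{s'}$ where $C_{s'}=uC_s$. Hence $R_u\BM R_u^{-1}=\BM$, even though the individual $T_s$ are permuted. Likewise $R_uE^*_s(a)R_u^{-1}=\mathrm{diag}(\chi_{ua+uC_s})=E^*_{s'}(ua)$, and as $s$ runs over $0,\dots,r$ so does $s'$ (with $C_0=\{0\}$ fixed). So the generating set of $\TA(a,b)$ is mapped onto that of $\TA(ua,ub)$. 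Applying this to $\delta_a,\delta_b$, with $P_{\tau_c}\delta_a=\delta_{a+c}$ and $R_u\delta_a=\delta_{ua}$, gives $P_{\tau_c}W(a,b)=W(a+c,b+c)$ and $R_uW(a,b)=W(ua,ub)$.

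\emph{The category $\cat$.} Fix $\alpha\in\AGL(1,p)$ and consider the map $A\mapsto P_\alpha^{\otimes m}A(P_\alpha^{-1})^{\otimes\ell}$ on all $C(\ell,m)$. This map respects linear combinations, composition and tensor products. It also respects adjoints, because $P_\alpha$ is unitary. Hence the collection of images of $\cat$ is again a tensor category with duals, namely the one generated by the images of the generators. So it suffices to show that the generators are sent into $\cat$; the images then generate a subcategory of $\cat$, giving the claim. For the generators, $U$ and $M$ are invariant under every permutation, as computed in Lemma~\ref{lem:Gequiv}. Each $T_s$ is sent to some $T_{s'}\in\cat(1,1)$ by the two computations above, since $\alpha$ is a composite of a translation and a dilation.

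\emph{Dimension.} $\AGL(1,p)$ acts sharply $2$-transitively on $\Zp$: given distinct $a,b$, the map $x\mapsto(b-a)x+a$ sends $(0,1)$ to $(a,b)$. Composing the module identities therefore gives $W(a,b)=P_\alpha W(0,1)$ for some $\alpha$. Since $P_\alpha$ is invertible, $\dim W(a,b)=\dim W(0,1)$ for every ordered pair of distinct vertices; in particular $W(0,1)=\C^p$ implies $W(0,y)=\C^p$.

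The main point needing care is the dilation step. Individual classes, and hence $\cat$-morphisms, are not fixed by $R_u$ when $u\notin E$; only the generating \emph{sets} are preserved. This is why the statement asserts stability of $\cat$ and of the family of algebras, not invariance of each element.
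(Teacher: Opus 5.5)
Your proof is correct and follows the paper's argument essentially step for step. Conjugation carries the generating sets of $\TA(a,b)$ onto those of the translated or dilated pair, the generators $U$, $M$, $T_s$ of $\cat$ are permuted among themselves, and the $2$-transitivity of $\AGL(1,p)$ gives the dimension statement. You also check explicitly that $A\mapsto P_\alpha^{\otimes m}A(P_\alpha^{-1})^{\otimes\ell}$ respects composition, tensor products and adjoints (using that $P_\alpha$ is unitary), which spells out the step the paper compresses into ``preserves the generating set of $\cat$, hence $\cat$ itself''.
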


\begin{proof}
Conjugation by $P_{\tau_{c}}$ fixes each $T_{s}$, translations commuting with convolutions, and
carries $E^{*}_{s}(a)$ to $E^{*}_{s}(a+c)$. Conjugation by $R_{u}$ carries $T_{s}$ to the class
matrix of $uC_{s}$, which is $T_{s'}$ for the class $C_{s'}=uC_{s}$; hence it permutes
$\{T_{1},\dots,T_{r}\}$ and fixes $\BM$ setwise, and it carries
$E^{*}_{s}(a)=\mathrm{diag}(\chi_{a+C_{s}})$ to $\mathrm{diag}(\chi_{ua+uC_{s}})=E^{*}_{s'}(ua)$.
In both cases generating sets are carried onto generating sets, so the algebras are conjugate;
since $P_{\tau_{c}}\delta_{a}=\delta_{a+c}$ and $R_{u}\delta_{a}=\delta_{ua}$, the modules
correspond. For the statement about $\cat$, $P_{\alpha}$ is a permutation matrix, so
$P_{\alpha}U=U$ and $MP_{\alpha}^{\otimes2}=P_{\alpha}M$, while the generators $T_{s}$ are
permuted among themselves by the computation just given; the conjugation therefore preserves the
generating set of $\cat$, hence $\cat$ itself. Finally $\AGL(1,p)$ is sharply $2$-transitive on
$\Zp$: the pair $(a,b)$ with $a\ne b$ is carried to $(0,1)$ by $\tau_{-a}$ followed by
$R_{(b-a)^{-1}}$.
\end{proof}

\begin{remark}
The maps $\tau_{c}$ and $R_{u}$ are in general not automorphisms of $X$; Lemma
\ref{lem:equivariance} is a statement about the arithmetic generating sets alone. It is the
action of the full affine group, and not merely of $\Zp\rtimes E$, that makes $\dim W(a,b)$
independent of the pair.
\end{remark}

\begin{lemma}[Realization]\label{lem:real}
Let $y\in\Zp\setminus\{0\}$. For every $w\in W(0,y)$ there exists $A_{w}\in\cat\cap C(2,1)$ with
$A_{w}(e_{0}\otimes e_{y})=w$.
\end{lemma}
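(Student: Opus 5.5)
We prove a stronger statement by induction on words: for every $X\in\TA(0,y)$ and every $c\in\{0,y\}$ the vector $X\delta_{c}$ is realized. Concretely, we track the family
\[
\mathcal V:=\{\,v\in\C^{p}:\ v=A(e_{0}\otimes e_{y})\ \text{for some }A\in\cat\cap C(2,1)\,\},
\]
which is a linear subspace because $\cat(2,1)$ is a linear space. It therefore suffices to show three things: $\delta_{0},\delta_{y}\in\mathcal V$, $\mathcal V$ is stable under left multiplication by each generator of $\TA(0,y)$, and $\mathcal V$ is a subspace. Then $\TA(0,y)\delta_{0}+\TA(0,y)\delta_{y}\subseteq\mathcal V$, since every element of the algebra is a linear combination of words in the generators.

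\emph{Base vectors.} To extract a single leg while killing the other, use $U^{*}$ on the unwanted leg. The maps $\id\otimes U^{*}$ and $U^{*}\otimes\id$ lie in $\cat(2,1)$, and they send $e_{0}\otimes e_{y}$ to $e_{0}$ and $e_{y}$ respectively.

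\emph{Stability under $\BM$.} If $v=A(e_{0}\otimes e_{y})$, then $T_{s}v=(T_{s}\circ A)(e_{0}\otimes e_{y})$, and $T_{s}\circ A\in\cat(2,1)$ by \eqref{eq:BMincat} and closure under composition.

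\emph{Stability under the dual idempotents.} This is the key step. Since $\cat$ has no flip, we must multiply by $\chi_{C_{s}}$ (resp.\ $\chi_{y+C_{s}}$) using only the copy of $e_{0}$ (resp.\ $e_{y}$) that sits in its original leg. The first attempt is to duplicate the first leg with $M^{*}$, apply $T_{s}$ to one copy to produce $T_{s}e_{0}=\chi_{C_{s}}$, and multiply it into $v$ with $M$. To keep leg order consistent, we build on three legs:
\[
B:=M\circ\big((T_{s}\otimes A)\circ(M^{*}\otimes\id)\big)\in\cat(2,1).
\]
Then
\[
(M^{*}\otimes\id)(e_{0}\otimes e_{y})=e_{0}\otimes e_{0}\otimes e_{y},\qquad
(T_{s}\otimes A)(e_{0}\otimes e_{0}\otimes e_{y})=\chi_{C_{s}}\otimes v,
\]
and applying $M$ gives $\chi_{C_{s}}\odotp v=E^{*}_{s}(0)v$. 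Symmetrically, for the basepoint $y$ we use $\id\otimes M^{*}$ and $A\otimes T_{s}$ followed by $M$, which yields $v\odotp\chi_{y+C_{s}}=E^{*}_{s}(y)v$. For $s=0$ this is $T_{0}=\id$, which extracts $\delta_{0}\odotp v$ (resp.\ $\delta_{y}\odotp v$). Here $T_{s}e_{0}=\chi_{C_{s}}$ holds because $(T_{s})_{x,0}=\delta_{x\in C_{s}}$, and $T_{s}e_{y}=\chi_{y+C_{s}}$ likewise.

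The main obstacle is exactly this leg bookkeeping: ensuring that every composite is a genuine morphism of $\cat$ whose tensor legs line up without any use of $\Sigma$. The construction above avoids the difficulty because the duplicated leg is placed adjacent to its source on the correct side. By linearity the inductive step then covers all words, and so $W(0,y)\subseteq\mathcal V$, which is the statement.
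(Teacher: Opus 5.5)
Your proof is correct and follows the paper's argument. You induct over words in the generators, extract one leg for the base case (your $\id\otimes U^{*}$ is the same linear map as the paper's $\pi_{1}=M\circ(\id\otimes(U\circ U^{*}))$), post-compose with $T_{s}$, and duplicate a leg with $M^{*}$ to multiply by $T_{s}e_{0}=\chi_{C_{s}}$ or $T_{s}e_{y}=\chi_{y+C_{s}}$. Your gadget $M\circ(T_{s}\otimes A)\circ(M^{*}\otimes\id)$ is just a leaner way of writing the paper's $\Mthree\circ(T_{s}\otimes\id\otimes J)\circ(\id\otimes A\otimes\id)\circ(M^{*}\otimes M^{*})$; the two agree as linear maps, since the paper's extra copy of $e_{y}$ is sent to $\one$ by $J$ and then absorbed by the multiplication.
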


\begin{proof}
The space $W(0,y)$ is spanned by the vectors $\theta_{j}\cdots\theta_{1}\delta$ with
$\delta\in\{\delta_{0},\delta_{y}\}$ and each $\theta_{i}$ one of the generators $T_{s}$,
$E^{*}_{s}(0)$, $E^{*}_{s}(y)$ for $0\le s\le r$. As $\cat\cap C(2,1)$ is a linear subspace, it
suffices to realize such vectors, and we induct on $j$.

For $j=0$, put $\pi_{1}:=M\circ(\id\otimes(U\circ U^{*}))$ and
$\pi_{2}:=M\circ((U\circ U^{*})\otimes\id)$. These lie in $\cat\cap C(2,1)$, and since
$(U\circ U^{*})(e_{j})=\one:=\sum_{k}e_{k}$ we get
$\pi_{1}(e_{i}\otimes e_{j})=M(e_{i}\otimes\one)=e_{i}$ and
$\pi_{2}(e_{i}\otimes e_{j})=e_{j}$; thus $\pi_{1}$ and $\pi_{2}$ realize $\delta_{0}$ and
$\delta_{y}$.

For the inductive step, suppose $A\in\cat\cap C(2,1)$ realizes $v$. If $\theta=T_{s}$, then
$T_{s}\circ A\in\cat\cap C(2,1)$ realizes $T_{s}v$: indeed $T_{s}\in\cat(1,1)$ for
$1\le s\le r$ because $T_{s}$ is one of the generators of $\cat$, and $T_{0}=\id\in\cat(1,1)$.
If $\theta=E^{*}_{s}(0)$, so that $\theta v=\chi_{C_{s}}\odotp v$ with the convention
$C_{0}=\{0\}$, put
\[
A'\;:=\;\Mthree\circ\big(T_{s}\otimes\id\otimes J\big)\circ\big(\id\otimes A\otimes\id\big)\circ
\big(M^{*}\otimes M^{*}\big),
\]
with $J\in\cat(1,1)$ as in \eqref{eq:BMincat}. The types compose as
$C(2,4)\to C(4,3)\to C(3,3)\to C(3,1)$, so $A'\in\cat\cap C(2,1)$, and evaluating,
\[
e_{0}\otimes e_{y}\;\xmapsto{M^{*}\otimes M^{*}}\;e_{0}\otimes e_{0}\otimes e_{y}\otimes e_{y}
\;\xmapsto{\id\otimes A\otimes\id}\;e_{0}\otimes v\otimes e_{y}
\;\xmapsto{T_{s}\otimes\id\otimes J}\;(T_{s}e_{0})\otimes v\otimes\one,
\]
whence $A'(e_{0}\otimes e_{y})=(T_{s}e_{0})\odotp v\odotp\one=\chi_{C_{s}}\odotp v$, as
required; the case $s=0$ gives $\chi_{\{0\}}\odotp v$ since $T_{0}e_{0}=e_{0}$. If
$\theta=E^{*}_{s}(y)$, then symmetrically
$A'':=\Mthree\circ(J\otimes\id\otimes T_{s})\circ(\id\otimes A\otimes\id)\circ(M^{*}\otimes M^{*})$
realizes $\one\odotp v\odotp(T_{s}e_{y})=v\odotp\chi_{y+C_{s}}$, because
$(T_{s}e_{y})(x)=\delta_{x-y\in C_{s}}$. This covers all generators and completes the induction.
\end{proof}

Observe that the argument uses only the definition of $\cat$ as a generated category, not its
identification with $C_{A(X)}$.

\begin{lemma}[Assembly]\label{lem:assembly}
Suppose that for every $1\le s\le r$ and every $x\in\Zp$ there is
$F^{(s)}_{x}\in\cat\cap C(2,1)$ with $F^{(s)}_{x}(e_{0}\otimes e_{y_{s}})=e_{x}$. Then
$\Sigma\in\cat(2,2)$.
\end{lemma}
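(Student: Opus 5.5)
The plan is to write $\Sigma$ as a sum of pieces: one piece on the diagonal $\{e_{i}\otimes e_{i}\}$, and one piece for each $1\le s\le r$ on the span of those $e_{i}\otimes e_{j}$ with $j-i\in C_{s}$. Each piece will be assembled from $M$, $M^{*}$, the class matrices and the given extraction maps $F^{(s)}_{x}$.

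\emph{Key fact from equivariance.} By Lemma~\ref{lem:Gequiv}, every $F\in\cat\cap C(2,1)$ satisfies $F(e_{\sigma(a)}\otimes e_{\sigma(b)})=P_{\sigma}F(e_{a}\otimes e_{b})$ for $\sigma\in G=\Zp\rtimes E$. Let $j-i\in C_{s}$. Then there is a unique $\sigma(t)=at+i$ with $a=(j-i)/y_{s}\in E$ and $(\sigma(0),\sigma(y_{s}))=(i,j)$. Hence
\[
F^{(s)}_{x}(e_{i}\otimes e_{j})=e_{\sigma(x)}\qquad\text{for every }x\in\Zp .
\]
So $F^{(s)}_{x}$ computes the affine coordinate $x$ relative to the frame $(i,j)$.

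\emph{Class projections and the diagonal.} The diagonal piece is $M^{*}M$, since $M^{*}M(e_{i}\otimes e_{j})=\delta_{ij}\,e_{i}\otimes e_{i}$. For $1\le s\le r$ put
\[
D_{s}:=(\id\otimes M)\circ(\id\otimes T_{s}\otimes\id)\circ(M^{*}\otimes\id)\in\cat(2,2).
\]
On basis vectors this gives $e_{i}\otimes e_{j}\mapsto e_{i}\otimes(T_{s}e_{i}\odot e_{j})=\delta_{j-i\in C_{s}}\,e_{i}\otimes e_{j}$, using $-C_{s}=C_{s}$. It therefore suffices to find, for each $s$, some $\Sigma_{s}\in\cat(2,2)$ that agrees with $\Sigma$ on pairs with $j-i\in C_{s}$. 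We will then have
\[
\Sigma=M^{*}M+\sum_{s}\Sigma_{s}D_{s}.
\]

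\emph{Building the swap on class $s$ without a flip.} The obstacle is that a pair $e_{i}\otimes e_{j}$ cannot be copied as $e_{i}\otimes e_{j}\otimes e_{i}\otimes e_{j}$, because that would need $\Sigma$. Only adjacent duplication by $M^{*}$ is available. The workaround is to insert an auxiliary point between the two legs, as follows.
\begin{enumerate}
\item[(a)] Apply $M^{*}\otimes M^{*}$ to get $e_{i}\otimes e_{i}\otimes e_{j}\otimes e_{j}$.
\item[(b)] Apply $\id\otimes F^{(s)}_{2y_{s}}\otimes\id$ to get $e_{i}\otimes e_{c}\otimes e_{j}$, where $c=\sigma(2y_{s})$.
\item[(c)] Apply $\id\otimes M^{*}\otimes\id$ to get $e_{i}\otimes e_{c}\otimes e_{c}\otimes e_{j}$.
\item[(d)] Apply $F'\otimes F''$ to the pairs $(i,c)$ and $(c,j)$.
\end{enumerate}
Since $p\ge5$, the point $2y_{s}$ lies outside $\{0,y_{s}\}$ and $2\in\Zpx$.

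\emph{Choosing $F'$ and $F''$.} Write $2y_{s}\in C_{s'}$. Let $\rho(t)=(2y_{s}/y_{s'})t$, an element of $G$ because $2y_{s}/y_{s'}\in E$. Then $(i,c)=(\sigma\rho(0),\sigma\rho(y_{s'}))$. Taking $F':=F^{(s')}_{\rho^{-1}(y_{s})}$ therefore gives output $e_{\sigma(y_{s})}=e_{j}$.

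The pair $(c,j)=(\sigma(2y_{s}),\sigma(y_{s}))$ has difference $-a y_{s}\in C_{s}$. It equals $(\sigma\lambda(0),\sigma\lambda(y_{s}))$ for $\lambda(t)=2y_{s}-t\in G$, using $-1\in E$. Taking $F'':=F^{(s)}_{\lambda^{-1}(0)}=F^{(s)}_{2y_{s}}$ gives output $e_{\sigma(0)}=e_{i}$.

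The crucial point is that $\rho$ and $\lambda$ depend only on $s$, not on $(i,j)$. Hence
\[
\Sigma_{s}:=(F'\otimes F'')\circ(\id\otimes M^{*}\otimes\id)\circ(\id\otimes F^{(s)}_{2y_{s}}\otimes\id)\circ(M^{*}\otimes M^{*})
\]
lies in $\cat(2,2)$ and sends $e_{i}\otimes e_{j}\mapsto e_{j}\otimes e_{i}$ whenever $j-i\in C_{s}$.

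Summing over $s$ and adding $M^{*}M$ yields $\Sigma\in\cat(2,2)$. The step needing the most care is verifying that the $G$-frames $\sigma\rho$ and $\sigma\lambda$ are legitimate elements of $G$ with the stated base pairs. This is exactly where $-1\in E$ and $2y_{s}/y_{s'}\in E$ are used.
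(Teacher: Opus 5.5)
Your proposal is correct and is essentially the paper's proof. It uses the same orbital projections, the same auxiliary point inserted between the duplicated legs via $M^{*}\circ F^{(s)}_{w}$, and the same $G$-equivariance transport. The only difference is that you fix $w=2y_{s}$, so that $y_{s}-w=-y_{s}\in C_{s}$ and the second extraction map is again of class $s$ (in the paper's notation $\varepsilon'=-1$ and $x_{2}=2y_{s}$), whereas the paper allows any $w\notin\{0,y_{s}\}$.
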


\begin{proof}
Let $G=\Zp\rtimes E$. By Lemma~\ref{lem:Gequiv} every $F\in\cat$ intertwines the tensor powers
of $P_{\sigma}$ for $\sigma\in G$. The group $G$ acts transitively on each \emph{orbital}
$O_{s}:=\{(i,j):j-i\in C_{s}\}$, since $\alpha_{c,a}$ carries $(0,y_{s})$ to $(c,ay_{s}+c)$ and
$ay_{s}$ ranges over $C_{s}$ as $a$ ranges over $E$.

\emph{Orbital projections.} For $0\le s\le r$ put
$g_{s}:=(\id\otimes M)\circ(\id\otimes T_{s}\otimes\id)\circ(M^{*}\otimes\id)\in\cat(2,2)$. Then
\[
g_{s}(e_{i}\otimes e_{j})=(\id\otimes M)\big(e_{i}\otimes T_{s}e_{i}\otimes e_{j}\big)
=\big[(T_{s}e_{i})(j)\big]\,e_{i}\otimes e_{j}=\delta_{\,j-i\in C_{s}}\;e_{i}\otimes e_{j}
\]
for $s\ge1$, while $g_{0}(e_{i}\otimes e_{j})=\delta_{ij}\,e_{i}\otimes e_{j}$.

\emph{Flips on orbitals.} Fix $1\le s\le r$ and choose $w\in\Zp\setminus\{0,y_{s}\}$, which is
possible as $p\ge3$. Let $t_{1},t_{2}\in\{1,\dots,r\}$ be the classes with $w\in C_{t_{1}}$ and
$y_{s}-w\in C_{t_{2}}$, let $\varepsilon,\varepsilon'\in E$ be the unique elements with
$w=\varepsilon y_{t_{1}}$ and $y_{s}-w=\varepsilon'y_{t_{2}}$, and set
$x_{1}:=\varepsilon^{-1}y_{s}$ and $x_{2}:=-\varepsilon'^{-1}w$. Define
\[
L_{s}\;:=\;\big(F^{(t_{1})}_{x_{1}}\otimes F^{(t_{2})}_{x_{2}}\big)\circ
\big(\id\otimes(M^{*}\circ F^{(s)}_{w})\otimes\id\big)\circ\big(M^{*}\otimes M^{*}\big)
\;\in\;\cat(2,2),
\]
the types composing as $C(2,4)\to C(4,4)\to C(4,2)$. Evaluating at the base tensor,
\[
e_{0}\otimes e_{y_{s}}\mapsto e_{0}\otimes e_{0}\otimes e_{y_{s}}\otimes e_{y_{s}}
\mapsto e_{0}\otimes e_{w}\otimes e_{w}\otimes e_{y_{s}}
\mapsto F^{(t_{1})}_{x_{1}}(e_{0}\otimes e_{w})\otimes F^{(t_{2})}_{x_{2}}(e_{w}\otimes e_{y_{s}}).
\]
For the first factor, $\sigma:=\alpha_{0,\varepsilon}\in G$ satisfies $\sigma(0)=0$ and
$\sigma(y_{t_{1}})=w$, whence
\[
F^{(t_{1})}_{x_{1}}(e_{0}\otimes e_{w})
=F^{(t_{1})}_{x_{1}}\big(P_{\sigma}\otimes P_{\sigma}\big)(e_{0}\otimes e_{y_{t_{1}}})
=P_{\sigma}F^{(t_{1})}_{x_{1}}(e_{0}\otimes e_{y_{t_{1}}})=P_{\sigma}e_{x_{1}}
=e_{\varepsilon x_{1}}=e_{y_{s}}.
\]
For the second, $\tau:=\alpha_{w,\varepsilon'}\in G$ satisfies $\tau(0)=w$ and
$\tau(y_{t_{2}})=\varepsilon'y_{t_{2}}+w=y_{s}$, whence
$F^{(t_{2})}_{x_{2}}(e_{w}\otimes e_{y_{s}})=P_{\tau}e_{x_{2}}=e_{\varepsilon'x_{2}+w}=e_{0}$. Therefore
$L_{s}(e_{0}\otimes e_{y_{s}})=e_{y_{s}}\otimes e_{0}$. Now for any $(i,j)\in O_{s}$ choose
$\sigma\in G$ with $\sigma(0)=i$ and $\sigma(y_{s})=j$; then
\[
L_{s}(e_{i}\otimes e_{j})=L_{s}\big(P_{\sigma}\otimes P_{\sigma}\big)(e_{0}\otimes e_{y_{s}})
=\big(P_{\sigma}\otimes P_{\sigma}\big)L_{s}(e_{0}\otimes e_{y_{s}})
=e_{\sigma(y_{s})}\otimes e_{\sigma(0)}=e_{j}\otimes e_{i}.
\]

\emph{Conclusion.} Put $\Xi:=g_{0}+\sum_{s=1}^{r}L_{s}\circ g_{s}\in\cat(2,2)$. For $i=j$ only
the $g_{0}$ term survives and yields $e_{i}\otimes e_{i}$; for $i\ne j$ the pair $(i,j)$ lies in
exactly one orbital $O_{s}$, only the $s$-th term survives, and it yields $e_{j}\otimes e_{i}$.
Hence $\Xi=\Sigma$.
\end{proof}

\begin{theorem}[Criterion]\label{thm:criterion}
Let $p\ge5$ be prime and let $E\lneq\Zpx$ be a subgroup with $-1\in E$. If $W(0,1)=\C^{p}$, then
every circulant graph of prime order $p$ with multiplier group $E$ has no quantum symmetry:
$A(X)$ is commutative and $\Qut(X)=\Aut(X)$.
\end{theorem}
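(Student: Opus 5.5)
The plan is to combine the realization lemma, the affine equivariance lemma and the assembly lemma, and then conclude with the flip criterion. Fix a circulant graph $X$ of order $p$ with multiplier group $E\lneq\Zpx$. Since $E$ is proper, $X$ is nontrivial. Proposition~\ref{prop:Eonly} then gives $C_{A(X)}=\cat$, so by Lemma~\ref{lem:cat}(ii) it suffices to prove $\Sigma\in\cat(2,2)$. To get this we verify the hypothesis of Lemma~\ref{lem:assembly}: for every class $1\le s\le r$ and every vertex $x\in\Zp$ we need a morphism $F^{(s)}_{x}\in\cat\cap C(2,1)$ with $F^{(s)}_{x}(e_{0}\otimes e_{y_{s}})=e_{x}$.

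First we transport fullness from the pair $(0,1)$ to each pair $(0,y_{s})$. By Lemma~\ref{lem:equivariance}, $R_{y_{s}}W(0,1)=W(0,y_{s})$. Since $R_{y_{s}}$ is invertible, the hypothesis $W(0,1)=\C^{p}$ gives $W(0,y_{s})=\C^{p}$; this is the final clause of that lemma with $y=y_{s}\neq0$. In particular $\delta_{x}\in W(0,y_{s})$ for every $x\in\Zp$.

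Next, Lemma~\ref{lem:real} applied with $y=y_{s}$ and $w=\delta_{x}$ gives some $A_{w}\in\cat\cap C(2,1)$ with $A_{w}(e_{0}\otimes e_{y_{s}})=e_{x}$. We take $F^{(s)}_{x}:=A_{w}$. The assembly lemma then yields $\Sigma\in\cat(2,2)=C_{A(X)}(2,2)$, so $A(X)$ is commutative by the flip criterion. Consequently $\Qut(X)=\Aut(X)$, because the abelianization of $A(X)$ is $C(\Aut(X))$.

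No step here is a real obstacle; the substantive work sits in the realization and assembly lemmas. The one point to check carefully is that the identification $C_{A(X)}=\cat$ is available. It needs $X$ nontrivial, which follows from $E\ne\Zpx$, and it holds for every $X$ with multiplier group $E$, not only for $X_{E}$; both facts are supplied by Proposition~\ref{prop:Eonly}. Also, the assembly lemma uses only the $G$-equivariance of $\cat$ from Lemma~\ref{lem:Gequiv}, so no classification of $\Aut(X)$ is required.
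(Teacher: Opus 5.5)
Your proposal is correct and follows essentially the same route as the paper: transport fullness to each pair $(0,y_{s})$ by Lemma~\ref{lem:equivariance}, obtain the point-extraction morphisms $F^{(s)}_{x}$ from Lemma~\ref{lem:real}, assemble the flip with Lemma~\ref{lem:assembly}, identify $\cat$ with $C_{A(X)}$ via Proposition~\ref{prop:Eonly}, and conclude with the flip criterion. Your final step (a commutative $A(X)$ equals its abelianization $C(\Aut(X))$) is the same observation as the paper's remark that the surjective abelianization map $A(X)\to C(\Aut(X))$ is then an isomorphism.
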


\begin{proof}
By Lemma~\ref{lem:equivariance} we have $W(0,y_{s})=\C^{p}$ for every $s$, so by
Lemma~\ref{lem:real} the morphisms $F^{(s)}_{x}$ required in Lemma~\ref{lem:assembly} exist.
Hence $\Sigma\in\cat(2,2)=C_{A(X)}(2,2)$ by Proposition~\ref{prop:Eonly}, and $A(X)$ is commutative by
Lemma~\ref{lem:cat}(ii). Since the abelianization map $A(X)\to C(\Aut(X))$ is surjective, it is
an isomorphism when $A(X)$ is commutative; hence $\Qut(X)=\Aut(X)$.
\end{proof}

We call the pair $(p,E)$ \emph{rigid} when the conclusion of Theorem~\ref{thm:criterion} holds,
that is, when every circulant graph of prime order $p$ with multiplier group $E$ is quantum
rigid. By Proposition~\ref{prop:Eonly} this depends only on $(p,E)$, and it is what the rest of
the paper establishes.

\begin{remark}\label{rem:whytwo}
A single basepoint cannot suffice. For $u\in E$ the matrix $R_{u}$ commutes with every generator
of $\TA(0)$: it fixes each $T_{s}$ and each $E^{*}_{s}(0)$, because $uC_{s}=C_{s}$. Since also
$R_{u}\delta_{0}=\delta_{0}$, every vector of $\TA(0)\delta_{0}$ is $R_{u}$-invariant for all
$u\in E$, hence constant on each set $C_{s}$; therefore
\[
\dim\big(\TA(0)\delta_{0}\big)\;\le\;r+1,
\]
which for a Paley graph is $3$, independently of $p$; in particular no vector of
$\TA(0)\delta_{0}$ separates two points of a class. The one-basepoint algebra itself also falls
short of the centralizer algebra of the point stabilizer: by \cite[Thm.~6.3]{HMR25}, among the
Paley graphs $P(q)$ only $q\in\{5,9\}$ satisfy the equalities between the Terwilliger algebra
and that centralizer algebra which define a \emph{triply transitive} graph there; compare also
\cite{HY23}. By contrast the stabilizer in $\Zp\rtimes E$ of the ordered pair $(0,1)$ is trivial: if $x\mapsto
ax+c$ fixes both $0$ and $1$ then $c=0$ and $a=1$. No symmetry of this kind therefore constrains
$W(0,1)$.
\end{remark}

\section{The reachable module and the capture dichotomy}\label{sec:capture}

The principal result concerns a module slightly larger than $W(a,b)$, better suited to the
arguments below.

\begin{definition}\label{def:reachable}
For an ordered pair $(a,b)$ of distinct vertices the \emph{reachable module} is
\[
\Rmod(a,b)\;:=\;\{\,A(e_{a}\otimes e_{b})\ :\ A\in\cat\cap C(2,1)\,\}\;\subseteq\;\C^{p},
\]
a linear subspace, since $\cat\cap C(2,1)$ is one.
\end{definition}

\begin{lemma}\label{lem:WsubR}
$\Rmod(a,b)$ is a $\TA(a,b)$-submodule of $\C^{p}$ containing $\delta_{a}$ and $\delta_{b}$;
consequently $W(a,b)\subseteq\Rmod(a,b)$, $W(a,b)$ being the smallest such submodule.
\end{lemma}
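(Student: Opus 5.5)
The plan is to reuse the inductive constructions from the proof of Lemma~\ref{lem:real}, with the base tensor $e_{a}\otimes e_{b}$ in place of $e_{0}\otimes e_{y}$. Those constructions never used that the first basepoint is $0$; they used only the shape of the base tensor. The argument has three steps: show that $\Rmod(a,b)$ contains the two point masses, show that it is closed under each generator of $\TA(a,b)$, and then conclude minimality of $W(a,b)$.

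\emph{Step 1: the point masses.} The morphisms $\pi_{1}=M\circ(\id\otimes(U\circ U^{*}))$ and $\pi_{2}=M\circ((U\circ U^{*})\otimes\id)$ lie in $\cat\cap C(2,1)$. They send $e_{a}\otimes e_{b}$ to $e_{a}$ and $e_{b}$ respectively, so $\delta_{a},\delta_{b}\in\Rmod(a,b)$.

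\emph{Step 2: closure under $\BM$.} Let $v=A(e_{a}\otimes e_{b})$ with $A\in\cat\cap C(2,1)$. Since $\BM\subseteq\cat(1,1)$ by \eqref{eq:BMincat}, composing gives $T_{s}v=(T_{s}\circ A)(e_{a}\otimes e_{b})\in\Rmod(a,b)$.

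\emph{Step 3: closure under the dual idempotents.} For $E^{*}_{s}(a)$, take
\[
A'=\Mthree\circ(T_{s}\otimes\id\otimes J)\circ(\id\otimes A\otimes\id)\circ(M^{*}\otimes M^{*}).
\]
This sends $e_{a}\otimes e_{b}$ to $(T_{s}e_{a})\odotp v\odotp\one$. Because $(T_{s}e_{a})(x)=\delta_{x-a\in C_{s}}$, this equals $\chi_{a+C_{s}}\odotp v=E^{*}_{s}(a)v$, with the case $s=0$ read via $C_{0}=\{0\}$. The symmetric morphism with $J\otimes\id\otimes T_{s}$ gives $E^{*}_{s}(b)v$.

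Since $\TA(a,b)$ is generated as an algebra by these operators, and $\Rmod(a,b)$ is a linear subspace (Definition~\ref{def:reachable}), closure under each generator gives closure under all of $\TA(a,b)$. So $\Rmod(a,b)$ is a $\TA(a,b)$-submodule.

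For the last assertion, any $\TA(a,b)$-submodule containing $\delta_{a}$ and $\delta_{b}$ contains $\TA(a,b)\delta_{a}+\TA(a,b)\delta_{b}=W(a,b)$. Moreover $W(a,b)$ is itself such a submodule, because $\TA(a,b)$ is unital. Hence $W(a,b)$ is the smallest one and in particular $W(a,b)\subseteq\Rmod(a,b)$.

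There is no real obstacle here. The one point needing care is the evaluation $T_{s}e_{a}=\chi_{a+C_{s}}$, which requires the symmetry $-C_{s}=C_{s}$ already recorded in Section~\ref{sec:circulant}.
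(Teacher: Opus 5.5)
Your proof is correct and is the paper's own argument: the paper also notes that the base case and the three inductive constructions of Lemma~\ref{lem:real} work for an arbitrary $A\in\cat\cap C(2,1)$ and base tensor $e_{a}\otimes e_{b}$, so $\Rmod(a,b)$ contains $\delta_{a},\delta_{b}$ and is closed under the generators of $\TA(a,b)$. One small correction: $T_{s}e_{a}=\chi_{a+C_{s}}$ follows directly from $(T_{s})_{x,a}=\delta_{x-a\in C_{s}}$, so your final remark is off and the symmetry $-C_{s}=C_{s}$ is not needed at that step.
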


\begin{proof}
That $\delta_{a},\delta_{b}\in\Rmod(a,b)$ is the base case of Lemma~\ref{lem:real}. For
stability, let $v=A(e_{a}\otimes e_{b})$ with $A\in\cat\cap C(2,1)$. The three constructions in
the inductive step of Lemma~\ref{lem:real} were carried out for an arbitrary
$A\in\cat\cap C(2,1)$, not merely for one realizing a word, and they produce morphisms in
$\cat\cap C(2,1)$ realizing $T_{s}v$, $\chi_{a+C_{s}}\odotp v$ and $\chi_{b+C_{s}}\odotp v$.
These are the images of $v$ under the generators of $\TA(a,b)$, so $\Rmod(a,b)$ is stable under
$\TA(a,b)$. The last assertion is the definition of $W(a,b)$.
\end{proof}

Both Lemma~\ref{lem:real} and Lemma~\ref{lem:WsubR} give the inclusion in the same direction,
$W(a,b)\subseteq\Rmod(a,b)$: the first realizes each vector of $W(a,b)$ by a morphism, the
second identifies $\Rmod(a,b)$ as a module over the algebra that generates $W(a,b)$. Neither
gives $\Rmod(a,b)\subseteq W(a,b)$, which would require every morphism in $\cat\cap C(2,1)$ to
act on the base tensor through operations on the last leg alone. We do not claim that inclusion,
and whether $\Rmod(a,b)=W(a,b)$ is unknown to us. All hypotheses below are stated for $\Rmod$,
which makes them weaker --- and hence the theorems stronger --- than the corresponding
hypotheses for $W$; every vector exhibited below lies in $W$, hence in $\Rmod$. The proof
of Lemma~\ref{lem:assembly} uses only the fullness of the reachable modules $\Rmod(0,y_{s})$, so
Theorem~\ref{thm:criterion} holds verbatim with $W(0,1)$ replaced by $\Rmod(0,1)$. Finally,
Lemma~\ref{lem:equivariance} applies equally to $\Rmod$: for $\alpha\in\AGL(1,p)$,
\begin{equation}\label{eq:Requiv}
\Rmod(\alpha(a),\alpha(b))=P_{\alpha}\,\Rmod(a,b),
\end{equation}
because $A\mapsto P_{\alpha}A(P_{\alpha}^{-1}\otimes P_{\alpha}^{-1})$ preserves
$\cat\cap C(2,1)$. Since $\AGL(1,p)$ is sharply $2$-transitive on $\Zp$, the number
\begin{equation}\label{eq:d}
\rho\;:=\;\dim\Rmod(a,b)
\end{equation}
does not depend on the ordered pair $(a,b)$ of distinct vertices.

Two elementary vectors are always reachable: $\pi_{1}$ and $\pi_{2}$ of Lemma~\ref{lem:real}
give $\delta_{a},\delta_{b}\in\Rmod(a,b)$, and $J\circ\pi_{1}$ gives the all-ones vector
$\one\in\Rmod(a,b)$.

The key tool is the following splicing principle.

\begin{proposition}[Cascade]\label{prop:cascade}
Let $a,b,z$ be pairwise distinct vertices with $\delta_{z}\in\Rmod(a,b)$. Then
\[
\Rmod(a,b)\;\supseteq\;\mathrm{span}\,\big\{\,w_{1}\odotp w_{2}\ :\ w_{1}\in\Rmod(a,z),\
w_{2}\in\Rmod(z,b)\,\big\}.
\]
\end{proposition}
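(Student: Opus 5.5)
Given $w_{1}=A_{1}(e_{a}\otimes e_{z})$ and $w_{2}=A_{2}(e_{z}\otimes e_{b})$ with $A_{1},A_{2}\in\cat\cap C(2,1)$, and $D\in\cat\cap C(2,1)$ with $D(e_{a}\otimes e_{b})=\delta_{z}$, I will build a single morphism in $\cat\cap C(2,1)$ sending $e_{a}\otimes e_{b}$ to $w_{1}\odotp w_{2}$. Linearity of $\cat\cap C(2,1)$ then gives the whole span.

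The idea is to duplicate the two legs, insert the extracted point $z$ as a new middle leg, and then feed the pairs $(a,z)$ and $(z,b)$ into $A_{1}$ and $A_{2}$. Concretely, first apply $M^{*}\otimes M^{*}$ to obtain $e_{a}\otimes e_{a}\otimes e_{b}\otimes e_{b}$. Applying $D$ to the middle two legs needs more copies, so I will instead start from $(M_{3})^{*}\otimes(M_{3})^{*}$, which gives $e_{a}\otimes e_{a}\otimes e_{a}\otimes e_{b}\otimes e_{b}\otimes e_{b}$. Then apply $\id\otimes\id\otimes D\otimes\id\otimes\id$ to the two innermost legs, obtaining $e_{a}\otimes e_{a}\otimes\delta_{z}\otimes e_{b}\otimes e_{b}$. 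Next apply $\id\otimes\id\otimes M^{*}\otimes\id\otimes\id$ to duplicate the middle leg into $\delta_{z}\otimes\delta_{z}$. (This is legitimate on the image vector because $M^{*}\delta_{z}=\delta_{z}\otimes\delta_{z}$; the point is that $\delta_{z}$ is a basis vector, not a general vector.) This gives $e_{a}\otimes e_{a}\otimes e_{z}\otimes e_{z}\otimes e_{b}\otimes e_{b}$.

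Now apply $\id\otimes A_{1}\otimes A_{2}\otimes\id$ to get $e_{a}\otimes w_{1}\otimes w_{2}\otimes e_{b}$. Finally contract with $\Mthree\circ(U^{*}\otimes\id\otimes\id\otimes U^{*})$; here $U^{*}$ sends every basis vector to $1$, so the outer legs disappear and the result is $w_{1}\odotp w_{2}$. Each map lies in $\cat$ by closure under tensor products, composition and adjoints. The leftmost and rightmost spare copies were only needed to keep leg types consistent, so a leaner version is to discard them from the start by using $M^{*}\otimes M^{*}$ and arranging $D$ to act on legs 2 and 3. The type bookkeeping $C(2,6)\to C(6,5)\to C(5,6)\to C(6,4)\to C(4,1)$ (or the leaner analogue) should be checked carefully.

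The only delicate step is the duplication of $\delta_{z}$: a general reachable vector cannot be copied by $M^{*}$, and this is exactly where the hypothesis that a point mass (rather than some vector) is captured enters. Since the category has no flip, I must also make sure legs are never permuted. In the chain above the order $a,z,z,b$ is naturally preserved, because $z$ is generated between the $a$- and $b$-legs. No equivariance lemma is needed.
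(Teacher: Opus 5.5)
Your construction is correct and is essentially the paper's: the paper uses exactly your leaner version, $M\circ(A_{w_{1}}\otimes A_{w_{2}})\circ(\id\otimes(M^{*}\circ A_{z})\otimes\id)\circ(M^{*}\otimes M^{*})$, in which the capture morphism acts on the two middle legs and $M^{*}$ duplicates the resulting basis vector $\delta_{z}$. One small slip in your padded version: after $U^{*}\otimes\id\otimes\id\otimes U^{*}$ only two legs remain, so the final contraction must be $M$ rather than $\Mthree$, which is what your own type count $C(4,1)$ already requires.
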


\begin{proof}
Choose $A_{z}\in\cat\cap C(2,1)$ with $A_{z}(e_{a}\otimes e_{b})=e_{z}$, and for
$w_{1}\in\Rmod(a,z)$ and $w_{2}\in\Rmod(z,b)$ choose $A_{w_{1}},A_{w_{2}}\in\cat\cap C(2,1)$
with $A_{w_{1}}(e_{a}\otimes e_{z})=w_{1}$ and $A_{w_{2}}(e_{z}\otimes e_{b})=w_{2}$. Consider
\[
A\;:=\;M\circ\big(A_{w_{1}}\otimes A_{w_{2}}\big)\circ
\big(\id\otimes(M^{*}\circ A_{z})\otimes\id\big)\circ\big(M^{*}\otimes M^{*}\big)
\;\in\;\cat\cap C(2,1),
\]
the types composing as $C(2,4)\to C(4,4)\to C(4,2)\to C(2,1)$. Evaluating at
$e_{a}\otimes e_{b}$,
\[
e_{a}\otimes e_{b}\mapsto e_{a}\otimes e_{a}\otimes e_{b}\otimes e_{b}
\mapsto e_{a}\otimes e_{z}\otimes e_{z}\otimes e_{b}
\mapsto w_{1}\otimes w_{2}\mapsto w_{1}\odotp w_{2}. \qedhere
\]
\end{proof}

\begin{corollary}[Splicing]\label{cor:splice}
Let $a,b,z$ be pairwise distinct with $\delta_{z}\in\Rmod(a,b)$. Then
\[
\Rmod(a,b)=\Rmod(a,z)=\Rmod(z,b).
\]
\end{corollary}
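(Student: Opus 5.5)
The plan is to deduce the two equalities from the Cascade proposition together with the dimension count \eqref{eq:d}. We do \emph{not} try to show the reverse inclusions directly.

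First we show $\Rmod(a,z)\subseteq\Rmod(a,b)$. Apply Proposition~\ref{prop:cascade} with $w_{2}=\one$, which lies in $\Rmod(z,b)$ because $J\circ\pi_{1}$ realizes it. For every $w_{1}\in\Rmod(a,z)$ this gives $w_{1}\odotp\one=w_{1}\in\Rmod(a,b)$. Symmetrically, taking $w_{1}=\one\in\Rmod(a,z)$ gives $w_{2}=\one\odotp w_{2}\in\Rmod(a,b)$ for every $w_{2}\in\Rmod(z,b)$. Hence both $\Rmod(a,z)$ and $\Rmod(z,b)$ are contained in $\Rmod(a,b)$.

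Next we upgrade these inclusions to equalities. The pairs $(a,z)$ and $(z,b)$ are ordered pairs of distinct vertices, because $a,b,z$ are pairwise distinct. By the sharp $2$-transitivity of $\AGL(1,p)$ and \eqref{eq:Requiv}, all three modules have the same finite dimension $\rho$. A subspace of $\Rmod(a,b)$ of the same finite dimension equals $\Rmod(a,b)$, so
\[
\Rmod(a,z)=\Rmod(a,b)=\Rmod(z,b).
\]

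No real obstacle is expected. The only point needing care is that equivariance \eqref{eq:Requiv} holds for the full affine group, not just for $\Zp\rtimes E$, which the paper has already established. Without it the dimensions of $\Rmod(a,z)$ and $\Rmod(a,b)$ could not be compared, since $(a,z)$ and $(a,b)$ need not lie in the same orbital of the graph. We must also check that $\one$ is reachable from every ordered pair, and this is immediate from $\pi_{1}$ and $J\in\cat(1,1)$.
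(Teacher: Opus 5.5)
Your proposal is correct and follows the paper's proof: the paper likewise takes $w_{2}=\one$ and then $w_{1}=\one$ in Proposition~\ref{prop:cascade} to get $\Rmod(a,z),\Rmod(z,b)\subseteq\Rmod(a,b)$. It then concludes equality from the common dimension $\rho$ of \eqref{eq:d}.
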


\begin{proof}
Taking $w_{2}=\one\in\Rmod(z,b)$ in Proposition~\ref{prop:cascade} gives
$\Rmod(a,b)\supseteq\Rmod(a,z)$, and taking $w_{1}=\one\in\Rmod(a,z)$ gives
$\Rmod(a,b)\supseteq\Rmod(z,b)$. All three spaces have dimension $\rho$ by \eqref{eq:d}, so the
inclusions are equalities.
\end{proof}

\begin{definition}\label{def:K}
The \emph{capture set} of the pair $(p,E)$ is
$K:=\{\,z\in\Zp:\delta_{z}\in\Rmod(0,1)\,\}$, and $K^{\times}:=K\setminus\{0\}$. Always
$\{0,1\}\subseteq K$.
\end{definition}

\begin{lemma}[Captures are symmetries]\label{lem:symmetry}
Let $z\in K^{\times}$. Then
\begin{enumerate}
\item[\textup{(i)}] $R_{z}\,\Rmod(0,1)=\Rmod(0,1)$, and consequently $zK=K$;
\item[\textup{(ii)}] if moreover $z\ne1$ and $\alpha_{z}$ denotes the affine map
$\alpha_{z}(x)=(1-z)x+z$, then $P_{\alpha_{z}}\Rmod(0,1)=\Rmod(0,1)$, and consequently
$\alpha_{z}(K)=K$.
\end{enumerate}
\end{lemma}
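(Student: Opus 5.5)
The plan is to combine Corollary~\ref{cor:splice} with the affine equivariance \eqref{eq:Requiv}. Both parts follow the same pattern. A capture $z$ identifies $\Rmod(0,1)$ with the module of a shorter pair, namely $(0,z)$ or $(z,1)$. That shorter pair is the image of $(0,1)$ under an explicit element of $\AGL(1,p)$. Equivariance then turns the identification into an invariance of $\Rmod(0,1)$. The statements about $K$ follow because permutation matrices send point masses to point masses.

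\emph{Part (i).} Let $z\in K^{\times}$. If $z=1$ there is nothing to prove, since $R_{1}=\id$. So assume $z\ne1$; then $0,1,z$ are pairwise distinct. Corollary~\ref{cor:splice}, applied with $(a,b)=(0,1)$, gives $\Rmod(0,1)=\Rmod(0,z)$. The multiplication $x\mapsto zx$ lies in $\AGL(1,p)$ because $z\in\Zpx$, and it sends $(0,1)$ to $(0,z)$. By \eqref{eq:Requiv},
\[
\Rmod(0,z)=R_{z}\,\Rmod(0,1),
\]
and hence $R_{z}\Rmod(0,1)=\Rmod(0,1)$. Since $R_{z}\delta_{x}=\delta_{zx}$, we have $\delta_{x}\in\Rmod(0,1)$ if and only if $\delta_{zx}\in R_{z}\Rmod(0,1)=\Rmod(0,1)$. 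Therefore $zK=K$; both directions hold because $R_{z}$ is invertible.

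\emph{Part (ii).} Assume $z\ne1$. The same corollary gives $\Rmod(0,1)=\Rmod(z,1)$. The map $\alpha_{z}(x)=(1-z)x+z$ is affine with leading coefficient $1-z\in\Zpx$, so it lies in $\AGL(1,p)$. It satisfies $\alpha_{z}(0)=z$ and $\alpha_{z}(1)=1$. By \eqref{eq:Requiv},
\[
\Rmod(z,1)=P_{\alpha_{z}}\Rmod(0,1),
\]
which gives the invariance. As before, $P_{\alpha_{z}}\delta_{x}=\delta_{\alpha_{z}(x)}$, and this yields $\alpha_{z}(K)=K$.

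The only point needing care is the direction and normalization of \eqref{eq:Requiv}: one must check that $P_{\alpha}$ really carries $\Rmod(a,b)$ onto $\Rmod(\alpha(a),\alpha(b))$, rather than onto the module of the inverse pair. This is immediate from the following identity. If $A\in\cat\cap C(2,1)$ realizes $v$ at $(a,b)$, then
\[
A':=P_{\alpha}A(P_{\alpha}^{-1}\otimes P_{\alpha}^{-1})
\]
lies in $\cat$ by Lemma~\ref{lem:equivariance} and satisfies $A'(e_{\alpha(a)}\otimes e_{\alpha(b)})=P_{\alpha}v$. No other obstacle is expected. In particular, the dimension argument behind Corollary~\ref{cor:splice} already supplies the equalities rather than mere inclusions.
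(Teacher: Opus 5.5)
Your proposal is correct and matches the paper's proof: Corollary~\ref{cor:splice} identifies $\Rmod(0,1)$ with $\Rmod(0,z)$ and $\Rmod(z,1)$, then \eqref{eq:Requiv} applied to $R_{z}$ and to $\alpha_{z}$ turns these identifications into invariances, and point masses are transported to give $zK=K$ and $\alpha_{z}(K)=K$. Your added check that $P_{\alpha}A(P_{\alpha}^{-1}\otimes P_{\alpha}^{-1})$ realizes $P_{\alpha}v$ at $(\alpha(a),\alpha(b))$ is correct and only makes explicit what the paper takes from Lemma~\ref{lem:equivariance}.
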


\begin{proof}
For $z=1$ both statements are vacuous, so assume $z\notin\{0,1\}$. By
Corollary~\ref{cor:splice}, $\Rmod(0,z)=\Rmod(0,1)=\Rmod(z,1)$.

(i) By \eqref{eq:Requiv} applied to $\alpha=R_{z}$, which carries $(0,1)$ to $(0,z)$, we get
$\Rmod(0,z)=R_{z}\Rmod(0,1)$; hence $R_{z}\Rmod(0,1)=\Rmod(0,1)$. Since
$R_{z}\delta_{w}=\delta_{zw}$, this gives $w\in K\iff zw\in K$.

(ii) The map $\alpha_{z}$ is affine and invertible, since $1-z\ne0$, and it satisfies
$\alpha_{z}(0)=z$ and $\alpha_{z}(1)=1$. By \eqref{eq:Requiv},
$\Rmod(z,1)=P_{\alpha_{z}}\Rmod(0,1)$, whence $P_{\alpha_{z}}\Rmod(0,1)=\Rmod(0,1)$; and
$P_{\alpha_{z}}\delta_{w}=\delta_{\alpha_{z}(w)}$ gives $\alpha_{z}(K)=K$.
\end{proof}

\begin{lemma}[Group structure of the capture set]\label{lem:group}
$K^{\times}$ is a subgroup of $\Zpx$, and $1-h\in K^{\times}$ for every $h\in K^{\times}$ with
$h\ne1$.
\end{lemma}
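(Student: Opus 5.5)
The plan is to read both assertions off Lemma~\ref{lem:symmetry}, applied to suitable elements of $K^{\times}$.

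\emph{Closure under products.} Let $h,h'\in K^{\times}$. Lemma~\ref{lem:symmetry}(i) applied to $h$ gives $hK=K$. Since $h'\in K$, this yields $hh'\in K$, and $hh'\neq0$, so $hh'\in K^{\times}$.

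\emph{Inverses.} I would avoid a separate argument by using finiteness: a nonempty subset of the finite group $\Zpx$ that is closed under multiplication is a subgroup, and $1\in K^{\times}$. If an explicit argument is wanted, note that $hK=K$ means multiplication by $h$ is a bijection of $K$. Hence $1=h\cdot h^{-1}$ must have a preimage in $K$, which forces $h^{-1}\in K$.

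\emph{The map $h\mapsto1-h$.} Let $h\in K^{\times}$ with $h\neq1$, and apply Lemma~\ref{lem:symmetry}(ii) to $z=h$. This gives $\alpha_{h}(K)=K$, where $\alpha_{h}(x)=(1-h)x+h$. Since $0\in K$, we get $\alpha_{h}(0)=h\in K$, which is not new information. Instead I would evaluate at a point whose image is $1-h$, or use $\alpha_{h}^{-1}$. Solving $\alpha_{h}(x)=1-h$ gives $x=(1-2h)/(1-h)$, which is not obviously in $K$.

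A cleaner route is to evaluate $\alpha_{h}^{-1}$ at $0$. The inverse is $\alpha_{h}^{-1}(y)=(y-h)/(1-h)$, so $\alpha_{h}^{-1}(0)=-h/(1-h)$. Since $\alpha_{h}$ is a bijection of $K$, its inverse also preserves $K$, and therefore $-h/(1-h)\in K^{\times}$. Now $h\in K^{\times}$ and $K^{\times}$ is a group, so multiplying by $h^{-1}$ gives $-1/(1-h)\in K^{\times}$. Inverting gives $-(1-h)\in K^{\times}$.

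\emph{Main obstacle: the sign.} To remove the $-1$ I need $-1\in K^{\times}$. I expect this to follow from $E\subseteq K^{\times}$, because $-1\in E$. To see $-1\in K$, I would apply Lemma~\ref{lem:symmetry}(ii) with $z=h$ to the point $x=1$. This only returns $\alpha_{h}(1)=1$, so it does not help.

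The more robust option is the swap. The pair $(0,1)$ is carried to $(1,0)$ by $x\mapsto1-x$, and by \eqref{eq:Requiv} we have $\Rmod(1,0)=P_{x\mapsto1-x}\Rmod(0,1)$. Then:
\begin{itemize}
\item If $\Rmod(1,0)=\Rmod(0,1)$, then $K$ is stable under $x\mapsto1-x$. This gives $1-h\in K$ directly, and the computation with $-(1-h)$ becomes unnecessary.
\item To get that equality, I would use the symmetry of the construction in the two legs. Precompose a morphism $A$ with a flip-free leg exchange realized on the base tensor: $\pi_1$ and $\pi_2$ swap roles, and the constructions in Lemma~\ref{lem:WsubR} are symmetric in $a$ and $b$.
\end{itemize}

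Proving $\Rmod(a,b)=\Rmod(b,a)$ without having the flip is the step I expect to be hardest. If it fails, the fallback is to show $-1\in K$ from $-1\in E$ by realizing $\delta_{-1}$ via Lemma~\ref{lem:Gequiv}, using the automorphism $x\mapsto 1-x$, which lies in $\Zp\rtimes E$. Under this map the base tensor $e_0\otimes e_1$ goes to $e_1\otimes e_0$. Then, using $-h/(1-h)$ together with closure under products and inverses, I would conclude that $1-h\in K^{\times}$.
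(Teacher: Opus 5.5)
Your product and inverse arguments are fine, and so is the computation $\alpha_{h}^{-1}(0)=-h/(1-h)\in K^{\times}$. Combined with the group property, it gives $h-1\in K^{\times}$ for every $h\in K^{\times}\setminus\{1\}$.

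The gap is the last step: you never obtain $1-h\in K^{\times}$, and neither of your routes around the sign works as written.
\begin{itemize}
\item The fallback rests on $E\subseteq K^{\times}$, which is false in general. We always have $-1\in E$, while $-1\notin\{0,1\}$ for $p\ge5$, so $-1\notin K$ whenever $K=\{0,1\}$. This happens, for instance, for $E=\Zpx$, as in the remark following Corollary~\ref{cor:allornothing}.
\item Applying Lemma~\ref{lem:Gequiv} to $\beta(x)=1-x$ only gives $A(e_{1}\otimes e_{0})=P_{\beta}A(e_{0}\otimes e_{1})$, that is $\Rmod(1,0)=P_{\beta}\Rmod(0,1)$. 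This is \eqref{eq:Requiv} again: it relates two different modules and puts no new point mass into $\Rmod(0,1)$.
\item The swap route needs $\Rmod(1,0)=\Rmod(0,1)$. The symmetry of the constructions in Lemma~\ref{lem:WsubR} only yields $W(1,0)=W(0,1)$, and $\Rmod$ may be strictly larger than $W$, so this remains unproved. The equality is in fact true: rotating and reflecting the legs of $\cat(3,0)$ with $M^{*}\circ U$ and $U^{*}\circ M$ shows $A\circ\Sigma\in\cat(2,1)$ for every $A\in\cat(2,1)$. But you give no such argument, and it is not in the paper.
\end{itemize}

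The repair is one line from what you already have. Since $h^{-1}\in K^{\times}\setminus\{1\}$ as well, your conclusion applied to $h^{-1}$ gives $h^{-1}-1=(1-h)h^{-1}\in K^{\times}$. Multiplying by $h$ then gives $1-h\in K^{\times}$; equivalently, $\alpha_{h^{-1}}^{-1}(0)=(1-h)^{-1}$.

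With this fix your proof is correct, and it takes a different route from the paper's. The paper shows that $L=\{1-x:x\in K\}$ is closed under multiplication, via $1-\alpha_{z}(h)=(1-z)(1-h)$. It then uses the cyclicity of $\Zpx$ to conclude $L\setminus\{0\}=K^{\times}$ from equality of orders. Your route needs only the group property of $K^{\times}$ and a single evaluation of $\alpha_{h^{-1}}^{-1}$, with no appeal to uniqueness of subgroups of a given order.
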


\begin{proof}
By Lemma~\ref{lem:symmetry}(i), $zK=K$ for every $z\in K^{\times}$; taking $w\in K^{\times}$
gives $zw\in K$ and $zw\ne0$, so $zw\in K^{\times}$. Thus $K^{\times}$ is a nonempty subset of
the finite group $\Zpx$ containing $1$ and closed under multiplication, hence a subgroup.

For the second assertion, let $L:=\{1-x:x\in K\}$. We claim $L$ is closed under multiplication.
Let $z,h\in K$. If $z=0$ then $(1-z)(1-h)=1-h\in L$. If $z=1$ then $(1-z)(1-h)=0=1-1\in L$, since
$1\in K$. If $z\in K^{\times}$ and $z\ne1$, then Lemma~\ref{lem:symmetry}(ii) gives
$\alpha_{z}(h)=(1-z)h+z\in K$, and
\[
1-\alpha_{z}(h)=1-(1-z)h-z=(1-z)(1-h),
\]
so again $(1-z)(1-h)\in L$. This proves the claim.

Now $0\in L$ and $1\in L$, because $1\in K$ and $0\in K$. Hence
$L^{\times}:=L\setminus\{0\}$ is a nonempty subset of $\Zpx$ containing $1$ and closed under
multiplication --- a product of two nonzero elements of the field $\Zp$ being nonzero --- so
$L^{\times}$ is a subgroup of $\Zpx$. Since $x\mapsto1-x$ is a bijection of $\Zp$ we have
$|L|=|K|$, and $0\in L$; therefore $|L^{\times}|=|K|-1=|K^{\times}|$. Two subgroups of the same
order of the cyclic group $\Zpx$ coincide, so $L^{\times}=K^{\times}$. Finally, if
$h\in K^{\times}$ and $h\ne1$, then $1-h$ is a nonzero element of $L$, that is
$1-h\in L^{\times}=K^{\times}$.
\end{proof}

\begin{theorem}[Capture dichotomy]\label{thm:capture}
Let $p\ge5$ be prime and let $E\le\Zpx$ be a subgroup with $-1\in E$. Then either $K=\{0,1\}$ or
$K=\Zp$. In the second case $\Rmod(0,1)=\C^{p}$, and if moreover $E$ is proper then $(p,E)$ is
rigid.
\end{theorem}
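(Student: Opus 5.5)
The proof reduces the dichotomy to field arithmetic in $\Zp$ via Lemma~\ref{lem:group}, and then converts $K=\Zp$ into fullness of $\Rmod(0,1)$ and rigidity. Put $H:=K^{\times}$. By Lemma~\ref{lem:group}, $H$ is a subgroup of $\Zpx$ with $1-h\in H$ for every $h\in H\setminus\{1\}$. If $|H|=1$ then $K=\{0,1\}$ and there is nothing to prove, so assume $|H|\ge2$. The goal is then to show $H=\Zpx$.

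\emph{Step 1: $-1\in H$.} Since $\Zpx$ is cyclic, $H$ has a generator $g\ne1$. Then $1-g\in H$, and hence $(1-g)/g=g^{-1}-1\in H$. Applying the stability property to $g^{-1}\in H\setminus\{1\}$ also gives $1-g^{-1}\in H$. The quotient of these two elements is $-1$, so $-1\in H$.

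\emph{Step 2: closure under addition.} Let $h_1,h_2\in H$ with $h_1+h_2\ne0$. If $h_2/h_1\ne-1$, then $-h_2/h_1\in H\setminus\{1\}$ by Step 1. The stability property gives $1+h_2/h_1\in H$, and multiplying by $h_1$ yields $h_1+h_2\in H$. Thus $H\cup\{0\}$ contains $1$ and is closed under addition and multiplication. It therefore contains the additive subgroup generated by $1$, which is all of $\Zp$, so $H=\Zpx$ and $K=\Zp$.

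\emph{Step 3: fullness and rigidity.} When $K=\Zp$, every $\delta_z$ lies in $\Rmod(0,1)$, so $\Rmod(0,1)=\C^{p}$. By \eqref{eq:Requiv} and sharp $2$-transitivity of $\AGL(1,p)$, the module $\Rmod(0,y_s)$ is also full for every $s$, so the morphisms $F^{(s)}_x$ of Lemma~\ref{lem:assembly} exist by the definition of $\Rmod$. Lemma~\ref{lem:assembly} then puts $\Sigma$ in $\cat(2,2)$. If $E$ is proper, Proposition~\ref{prop:Eonly} and Lemma~\ref{lem:cat}(ii) give commutativity of $A(X)$, exactly as in Theorem~\ref{thm:criterion} with $W$ replaced by $\Rmod$ (as the text after Lemma~\ref{lem:WsubR} notes). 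This is the rigidity of $(p,E)$.

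The only step requiring care is Step 1. Closure under $h\mapsto1-h$ alone does not obviously produce additive structure, and Step 2 cannot start until $-1$ is known to lie in $H$. If the generator trick above were to fail, a fallback would be an orbit-counting argument. Use the group $\{x,1-x,1/x,\dots\}\cong S_3$ acting on $H\setminus\{1\}$; together with the fact that $-1$ is the unique element of order $2$ when $|H|$ is even, this would force $|H|$ to be even. The computation in Step 1 is direct, however, and I expect it to suffice.
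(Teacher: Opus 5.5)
Your proof is correct and follows the paper's argument step for step. It uses the same reduction to Lemma~\ref{lem:group}, the same derivation of $-1\in H$ from $1-h$ and $1-h^{-1}$ (the paper first obtains $-h$ and then divides by $h$, whereas you multiply by $g^{-1}$ first), the same addition step $a+b=a(1-t)$ with $t=-ba^{-1}$, and the same appeal to the $\Rmod$-form of Theorem~\ref{thm:criterion}. Taking $g$ to be a generator is unnecessary, since any $h\in H\setminus\{1\}$ works. You can also drop the fallback paragraph: it is not needed, and as sketched the $S_{3}$ orbit count only gives $|H|\equiv1\pmod 6$ when $|H|$ is odd, so it would not by itself force $|H|$ to be even.
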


\begin{proof}
Write $H:=K^{\times}$, a subgroup of $\Zpx$ with $1-h\in H$ for all $h\in H\setminus\{1\}$, by
Lemma~\ref{lem:group}. If $H=\{1\}$ then $K=\{0,1\}$ and we are in the first case. Suppose
therefore $|H|\ge2$ and fix $h\in H$ with $h\ne1$.

\emph{Step 1: $-1\in H$.} Since $h\ne1$ we have $1-h\in H$, and since $h^{-1}\in H$ with
$h^{-1}\ne1$ we have $1-h^{-1}=(h-1)h^{-1}\in H$. Both elements are nonzero, so their quotient
lies in $H$:
\[
\frac{1-h}{(h-1)h^{-1}}\;=\;\frac{(1-h)h}{h-1}\;=\;-h\;\in\;H,
\]
whence $-1=(-h)h^{-1}\in H$.

\emph{Step 2: $K=H\cup\{0\}$ is closed under addition.} Let $a,b\in H$. Then $t:=-ba^{-1}\in H$,
by Step 1 and the group property. If $t=1$ then $b=-a$ and $a+b=0\in K$. If $t\ne1$, then
$1-t\in H$ by Lemma~\ref{lem:group}, and
\[
a+b\;=\;a\bigl(1+ba^{-1}\bigr)\;=\;a(1-t)\;\in\;H .
\]
In either case $a+b\in K$. Sums involving $0$ are trivial, so $K$ is closed under addition.

\emph{Step 3: conclusion.} $K$ contains $1$ and is closed under addition, so it contains
$1,2,3,\dots$, that is all of $\Zp$. Hence $K=\Zp$, so $\delta_{z}\in\Rmod(0,1)$ for every
$z\in\Zp$ and $\Rmod(0,1)=\C^{p}$. The final assertion is Theorem~\ref{thm:criterion} in the
form noted after Lemma~\ref{lem:WsubR}.
\end{proof}

\begin{corollary}[One point mass suffices]\label{cor:onepoint}
Let $E\lneq\Zpx$. If $\delta_{z}\in\Rmod(0,1)$ for a single $z\in\Zp\setminus\{0,1\}$ --- in
particular if $\delta_{z}\in W(0,1)$ --- then $\Rmod(0,1)=\C^{p}$ and $(p,E)$ is rigid.
\end{corollary}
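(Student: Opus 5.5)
The corollary is a direct combination of the capture dichotomy with the criterion, so the plan is short. Let $z\in\Zp\setminus\{0,1\}$ with $\delta_{z}\in\Rmod(0,1)$.

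\emph{Step 1: locate $z$ in the capture set.} By Definition~\ref{def:K}, $z\in K$. Since also $\{0,1\}\subseteq K$, we get $K\supsetneq\{0,1\}$. If instead only $\delta_{z}\in W(0,1)$ is given, first apply Lemma~\ref{lem:WsubR}, which gives $W(0,1)\subseteq\Rmod(0,1)$, and reduce to the previous case.

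\emph{Step 2: apply the dichotomy.} Theorem~\ref{thm:capture} leaves only the alternative $K=\Zp$. Hence $\delta_{x}\in\Rmod(0,1)$ for every $x\in\Zp$, and since $\Rmod(0,1)$ is a linear subspace, $\Rmod(0,1)=\C^{p}$.

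\emph{Step 3: conclude rigidity.} Because $E$ is proper, Theorem~\ref{thm:criterion} applies in the $\Rmod$-form recorded after Lemma~\ref{lem:WsubR}. There, the assembly lemma needs only fullness of the $\Rmod(0,y_{s})$, and these are obtained from $\Rmod(0,1)$ via \eqref{eq:Requiv}. This gives that $(p,E)$ is rigid.

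There is no real obstacle: all the substance lies in Theorem~\ref{thm:capture}. The only point requiring care is that the hypothesis $z\notin\{0,1\}$ is exactly what rules out the first alternative of the dichotomy.
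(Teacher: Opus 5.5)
Your proposal is correct and follows the paper, which gives this corollary no separate proof and treats it as immediate from Theorem~\ref{thm:capture}. A capture $z\notin\{0,1\}$ rules out $K=\{0,1\}$ and forces $K=\Zp$, hence $\Rmod(0,1)=\C^{p}$. Rigidity then comes from the $\Rmod$-form of Theorem~\ref{thm:criterion}, which the final sentence of Theorem~\ref{thm:capture} already states, and the $W$-case reduces to this one via Lemma~\ref{lem:WsubR}.
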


Two consequences follow. The first identifies the previous state of the art as the depth-one
instance of Corollary~\ref{cor:onepoint}.

\begin{corollary}[Chassaniol's criterion is depth one]\label{cor:chassaniol}
Let $E\lneq\Zpx$ and suppose $|C_{s}\cap(1+C_{t})|=1$ for some $1\le s,t\le r$, that is, some
cyclotomic number of order $r$ for $p$ equals $1$. Then $(p,E)$ is rigid.
\end{corollary}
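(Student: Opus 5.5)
The plan is to reduce the statement to Corollary~\ref{cor:onepoint}: it suffices to exhibit one point mass $\delta_{z}$ with $z\notin\{0,1\}$ in $W(0,1)\subseteq\Rmod(0,1)$. The hypothesis supplies it directly. If $|C_{s}\cap(1+C_{t})|=1$ with $1\le s,t\le r$, then the block $B_{s,t}=C_{s}\cap(1+C_{t})$ of the basepoint partition of Definition~\ref{def:blocks} is a singleton $\{z\}$. Since $s,t\ge1$, we have $z\ne0$ (because $0\notin C_{s}$) and $z\ne1$ (because $0\notin C_{t}$, so $1\notin 1+C_{t}$). Definition~\ref{def:blocks} gives
\[
\chi_{B_{s,t}}=E^{*}_{t}(1)\,T_{s}\,\delta_{0}\in W(0,1),
\]
so $\delta_{z}=\chi_{\{z\}}\in W(0,1)$.

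To be careful, I would check this identity directly. First, $T_{s}\delta_{0}=\chi_{C_{s}}$, since $(T_{s}\delta_{0})(x)=\delta_{x\in C_{s}}$. Second, multiplying by $E^{*}_{t}(1)=\mathrm{diag}(\chi_{1+C_{t}})$ restricts this to $C_{s}\cap(1+C_{t})$. Both operators are generators of $\TA(0,1)$, so the resulting vector lies in $\TA(0,1)\delta_{0}\subseteq W(0,1)$. Lemma~\ref{lem:WsubR} then gives $\delta_{z}\in\Rmod(0,1)$, so $z\in K\setminus\{0,1\}$.

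Theorem~\ref{thm:capture} then forces $K=\Zp$ and $\Rmod(0,1)=\C^{p}$. Since $E$ is proper, Corollary~\ref{cor:onepoint} (equivalently, Theorem~\ref{thm:criterion} in the $\Rmod$ form noted after Lemma~\ref{lem:WsubR}) shows that $(p,E)$ is rigid.

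There is no real obstacle. The only points needing care are that $z$ avoids the basepoints, which is exactly why $s,t\ge1$ is required, and the bookkeeping of which dual idempotent is attached to which basepoint. All the substantive work has already been done in the capture dichotomy.
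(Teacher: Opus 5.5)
Your proposal is correct and follows essentially the same route as the paper: you take the singleton block $C_{s}\cap(1+C_{t})=\{z\}$, check $z\notin\{0,1\}$, write $\delta_{z}=E^{*}_{t}(1)T_{s}\delta_{0}\in W(0,1)$, and invoke Corollary~\ref{cor:onepoint}. Your separate appeal to Theorem~\ref{thm:capture} and Lemma~\ref{lem:WsubR} just spells out what Corollary~\ref{cor:onepoint} already packages.
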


\begin{proof}
Let $C_{s}\cap(1+C_{t})=\{z\}$. Then $z\ne0$ because $0\notin C_{s}$, and $z\ne1$ because
$1\in1+C_{t}$ would force $0\in C_{t}$. By Definition~\ref{def:blocks},
$\delta_{z}=\chi_{C_{s}\cap(1+C_{t})}=E^{*}_{t}(1)T_{s}\delta_{0}\in W(0,1)$, and
Corollary~\ref{cor:onepoint} applies.
\end{proof}

This is \cite[Thm.~6.16]{Cha19}, whose hypothesis is that an orbital intersection number
$|O^{s}_{0}\cap O^{t}_{1}|$ equals $1$; the two conditions coincide under the dictionary of
Remark~\ref{rem:GP}. Theorem~\ref{thm:capture} shows the depth-one hypothesis to be inessential:
a point mass obtained after any number of convolutions serves as well, and depth two or three
already suffices in every case below the Banica--Bichon--Chenevier threshold where depth one
fails.

The second consequence is an all-or-nothing statement of independent interest, which explains
why no partial progress is possible.

\begin{corollary}\label{cor:allornothing}
For each pair $(p,E)$ exactly one of the following holds: either $\Rmod(0,1)=\C^{p}$, or
$\Rmod(a,b)$ contains no point mass other than $\delta_{a}$ and $\delta_{b}$, for every ordered
pair $(a,b)$ of distinct vertices.
\end{corollary}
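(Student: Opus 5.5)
The plan is to combine the capture dichotomy (Theorem~\ref{thm:capture}) with the affine equivariance \eqref{eq:Requiv}, transporting the statement about the single pair $(0,1)$ to an arbitrary ordered pair $(a,b)$.

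First we check that the two alternatives are mutually exclusive. If $\Rmod(0,1)=\C^{p}$, then $\rho=p$ by \eqref{eq:d}, so $\Rmod(a,b)=\C^{p}$ for every pair $(a,b)$. In particular $\Rmod(a,b)$ contains $\delta_{z}$ for any $z\notin\{a,b\}$, and such a $z$ exists because $p\ge5$. So the second alternative fails whenever the first holds.

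Next we show that at least one alternative holds. Suppose $\Rmod(0,1)\ne\C^{p}$. By Theorem~\ref{thm:capture} we must have $K=\{0,1\}$, since $K=\Zp$ would force $\Rmod(0,1)=\C^{p}$. Now take any ordered pair $(a,b)$ of distinct vertices, and suppose $\delta_{z}\in\Rmod(a,b)$. Let $\alpha\in\AGL(1,p)$ be the unique affine map carrying $(0,1)$ to $(a,b)$, namely $\alpha(x)=(b-a)x+a$. By \eqref{eq:Requiv},
\[
\Rmod(a,b)=P_{\alpha}\Rmod(0,1).
\]
Since $P_{\alpha}^{-1}\delta_{z}=\delta_{\alpha^{-1}(z)}$, this gives $\delta_{\alpha^{-1}(z)}\in\Rmod(0,1)$, so $\alpha^{-1}(z)\in K=\{0,1\}$. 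Hence $z\in\{\alpha(0),\alpha(1)\}=\{a,b\}$, which is the second alternative.

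No step is a real obstacle. The only points needing care are that \eqref{eq:Requiv} holds for the full affine group and not just for $\Zp\rtimes E$, and that $P_{\alpha}$ sends point masses to point masses with the indices transported by $\alpha$. Both are already recorded in the paper.
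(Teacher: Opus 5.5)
Your proposal is correct and takes the same route as the paper, whose proof is the one-line remark that the corollary is immediate from Theorem~\ref{thm:capture} and \eqref{eq:Requiv}. You have simply written out both halves of that remark: exclusivity via $\rho=p$ and $p\ge5$, and the transport of $K=\{0,1\}$ to an arbitrary pair along $\alpha(x)=(b-a)x+a$.
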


\begin{proof}
Immediate from Theorem~\ref{thm:capture} and \eqref{eq:Requiv}.
\end{proof}

\begin{remark}
The second alternative does occur, and Theorem~\ref{thm:capture} does not require $E$ to be
proper. Take $E=\Zpx$, so that $r=1$ and $X_{E}=K_{p}$; here the graph does
have quantum symmetry, since $\Qut(K_{p})=S_{p}^{+}\ne S_{p}$ for $p\ge4$ \cite{Wang98}. The
basepoint partition has blocks $\{0\}$, $\{1\}$ and $\Zp\setminus\{0,1\}$, and one checks
directly that $W(0,1)=\mathrm{span}\{\delta_{0},\delta_{1},\chi_{\Zp\setminus\{0,1\}}\}$ is
three-dimensional and $\TA(0,1)$-stable: indeed $T_{1}v=(\sum_{y}v(y))\one-v$ for every $v$. So
$K=\{0,1\}$, in accordance with Theorem~\ref{thm:capture}.
\end{remark}

\subsection*{The convolution filtration}
The certificates of Section~\ref{sec:certificates} and the computations of
Section~\ref{sec:sweep} are naturally graded by the number of convolutions they use, and we fix
that grading now. Let $\DA\subseteq M_{p}(\C)$ be the algebra of diagonal matrices constant on
the blocks of the basepoint partition, so that $\DA$ is spanned by the orthogonal idempotents
$\mathrm{diag}(\chi_{B})$, $B\in\mathcal P$, and
$\DA=\langle E^{*}_{0}(0),\dots,E^{*}_{r}(0),E^{*}_{0}(1),\dots,E^{*}_{r}(1)\rangle$. Put
\begin{equation}\label{eq:filtration}
V_{0}:=\mathrm{span}\{\delta_{0},\delta_{1}\},\qquad
V_{d}:=\DA\,\BM\,V_{d-1}\quad(d\ge1),
\end{equation}
where, for a subspace $\Lambda\subseteq\C^{p}$, we write $\DA\Lambda$ and $\BM\Lambda$ for the
spans of $\{Av:A\in\DA,\,v\in\Lambda\}$ and $\{Av:A\in\BM,\,v\in\Lambda\}$. As $I$
lies in both $\DA$ and $\BM$ the spaces $V_{d}$ increase with $d$, and as $\TA(0,1)$ is
generated by $\DA$ and $\BM$ we have
$W(0,1)=\bigcup_{d\ge0}V_{d}$. We say that a vector of $V_{d}$ is \emph{reachable in $d$
convolutions}.

\begin{lemma}\label{lem:V1}
$V_{1}=\mathrm{span}\{\chi_{B}:B\in\mathcal P\}$. Consequently $V_{1}$ contains a point mass
$\delta_{z}$ with $z\notin\{0,1\}$ if and only if some block of the basepoint partition is a
singleton, that is, if and only if the hypothesis of Corollary~\ref{cor:chassaniol} holds.
\end{lemma}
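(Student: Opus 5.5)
The plan is to compute $V_{1}=\DA\,\BM\,V_{0}$ directly from the definitions and then read off which point masses it contains.

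\emph{Computing $\BM V_{0}$.} It is spanned by the vectors $T_{s}\delta_{0}=\chi_{C_{s}}$ and $T_{s}\delta_{1}=\chi_{1+C_{s}}$ for $0\le s\le r$, with $C_{0}=\{0\}$. Applying the idempotents $\mathrm{diag}(\chi_{B})$, $B\in\mathcal P$, which span $\DA$, produces vectors of the form $\chi_{B}\odotp\chi_{C_{s}}$ and $\chi_{B}\odotp\chi_{1+C_{s}}$.

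\emph{Each such vector is a block indicator or zero.} Every block $B=B_{s',t'}$ lies inside a single set $0+C_{s'}$ and a single set $1+C_{t'}$. Hence $\chi_{B}\odotp\chi_{C_{s}}$ equals $\chi_{B}$ if $s=s'$ and $0$ otherwise, and likewise for the second family. This gives $V_{1}\subseteq\mathrm{span}\{\chi_{B}:B\in\mathcal P\}$.

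\emph{Reverse inclusion.} Definition~\ref{def:blocks} already gives $\chi_{B_{s,t}}=E^{*}_{t}(1)T_{s}\delta_{0}$, with $E^{*}_{t}(1)\in\DA$ and $T_{s}\in\BM$. So every block indicator lies in $V_{1}$, and $V_{1}=\mathrm{span}\{\chi_{B}:B\in\mathcal P\}$.

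\emph{The consequence.} The blocks are pairwise disjoint and nonempty, so their indicators are linearly independent. A vector in their span is a point mass $\delta_{z}$ exactly when $\delta_{z}=\sum_{B}c_{B}\chi_{B}$. Evaluating this identity at the points of the block $B_{z}$ containing $z$ gives $c_{B_{z}}=1$ and forces $B_{z}=\{z\}$. Evaluating at the other blocks forces all other coefficients to vanish.

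So $\delta_{z}\in V_{1}$ if and only if $\{z\}$ is a block. For $z\notin\{0,1\}$, the blocks containing $z$ are exactly the sets $C_{s}\cap(1+C_{t})$ with $1\le s,t\le r$. Thus a singleton block other than $\{0\}$ and $\{1\}$ means precisely that $|C_{s}\cap(1+C_{t})|=1$ for some $s,t\ge1$, which is the hypothesis of Corollary~\ref{cor:chassaniol}.

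There is no genuine obstacle here. The only care needed is in the block-refinement step: each block lies inside a single set $C_{s}$ and a single set $1+C_{t}$, and this uses the convention that $C_{0}=\{0\}$ together with the identification of the blocks other than $\{0\}$ and $\{1\}$.
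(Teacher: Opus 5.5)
Your proof is correct and follows the paper's argument essentially step for step. Both compute $\BM V_{0}$ from $T_{s}\delta_{0}=\chi_{C_{s}}$ and $T_{s}\delta_{1}=\chi_{1+C_{s}}$, use that each block lies in a single $C_{s}$ and a single $1+C_{t}$ to get one inclusion, and read off the point masses from the linear independence of disjoint block indicators; the only cosmetic difference is that for the reverse inclusion you write $\chi_{B_{s,t}}=E^{*}_{t}(1)T_{s}\delta_{0}$ where the paper writes $\chi_{B_{s,t}}=\chi_{B_{s,t}}\odotp\chi_{C_{s}}$, and either identity exhibits the indicator as an element of $\DA\,\BM\,V_{0}$.
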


\begin{proof}
The space $\BM V_{0}$ is spanned by the vectors $T_{s}\delta_{0}=\chi_{C_{s}}$ and
$T_{s}\delta_{1}=\chi_{1+C_{s}}$ for $0\le s\le r$. Every block of $\mathcal P$ is contained in
a single $C_{s}$ and in a single $1+C_{t}$, so $\chi_{B}\odotp\chi_{C_{s}}$ is either $\chi_{B}$
or $0$, and likewise for $\chi_{1+C_{s}}$; hence $\DA\BM V_{0}$ is contained in the span of
the $\chi_{B}$. The reverse inclusion holds because
$\chi_{B_{s,t}}=\chi_{B_{s,t}}\odotp\chi_{C_{s}}$. Finally the blocks are nonempty and pairwise
disjoint, so the $\chi_{B}$ are linearly independent and a linear combination of them is a point
mass $\delta_{z}$ only when $\{z\}$ is itself a block.
\end{proof}

\begin{definition}\label{def:depth}
The \emph{capture depth} $d(p,E)$ is the least $d\ge1$ for which $V_{d}$ contains a point mass
$\delta_{z}$ with $z\notin\{0,1\}$, and
\[
Z(p,E)\;:=\;\{\,z\in\Zp\setminus\{0,1\}\ :\ \delta_{z}\in V_{d(p,E)}\,\}
\]
is the set of vertices captured at that depth. By Lemma~\ref{lem:V1}, $d(p,E)=1$ holds precisely
when some cyclotomic number of order $r$ for $p$ equals $1$; so Corollary~\ref{cor:chassaniol} is
exactly the criterion at depth one, and Theorem~\ref{thm:capture} says that a capture at any
depth is as good as one at depth one.
\end{definition}

The filtration carries one symmetry, which will be decisive in Section~\ref{sec:conj}.

\begin{lemma}\label{lem:reflection}
Let $\beta(x)=1-x$. Then $P_{\beta}V_{d}=V_{d}$ for every $d\ge0$. Consequently $Z(p,E)$ is
stable under $\beta$; and since $2^{-1}$ is the unique fixed point of $\beta$ on
$\Zp\setminus\{0,1\}$,
\[
2^{-1}\in Z(p,E)\qquad\Longleftrightarrow\qquad |Z(p,E)|\ \text{is odd}.
\]
In particular $2^{-1}$ is the only vertex that can be captured alone.
\end{lemma}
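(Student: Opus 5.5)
We argue by induction on $d$, showing that conjugation by $P_{\beta}$ maps each of the generating families $\DA$ and $\BM$ into itself, and that $P_{\beta}V_{0}=V_{0}$.

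The base case is immediate: $P_{\beta}\delta_{0}=\delta_{1}$ and $P_{\beta}\delta_{1}=\delta_{0}$.

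For $\BM$, note that $\beta$ is the composite of the translation $x\mapsto x+1$ with the multiplication $R_{-1}$. Translation commutes with every $T_{s}$, as in Lemma~\ref{lem:equivariance}. Conjugation by $R_{-1}$ sends $T_{s}$ to the class matrix of $-C_{s}=C_{s}$, because $-1\in E$. Hence $P_{\beta}T_{s}P_{\beta}^{-1}=T_{s}$ for every $s$, and therefore $P_{\beta}\BM P_{\beta}^{-1}=\BM$. One can also check this directly: $(P_{\beta}T_{s}P_{\beta}^{-1})_{x,y}=\delta_{(1-x)-(1-y)\in C_{s}}=\delta_{y-x\in C_{s}}$, which is $(T_{s})_{x,y}$ since $C_{s}=-C_{s}$.

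For $\DA$, conjugation by $P_{\beta}$ sends $\mathrm{diag}(\chi_{B})$ to $\mathrm{diag}(\chi_{\beta(B)})$. Now
\[
\beta(B_{s,t})=\{1-x:\ x\in C_{s},\ x-1\in C_{t}\},
\]
and for $x'=1-x$ we have $x'-1=-x\in C_{s}$ and $x'=-(x-1)\in C_{t}$, again using $-C=C$ for cosets. Thus $\beta(B_{s,t})=B_{t,s}$, and the same computation handles the singleton blocks $B_{0,1}$ and $B_{1,0}$. So $\beta$ permutes the blocks of $\mathcal P$ and $P_{\beta}\DA P_{\beta}^{-1}=\DA$.

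Combining the three facts, if $P_{\beta}V_{d-1}=V_{d-1}$ then
\[
P_{\beta}V_{d}=(P_{\beta}\DA P_{\beta}^{-1})(P_{\beta}\BM P_{\beta}^{-1})P_{\beta}V_{d-1}=\DA\,\BM\,V_{d-1}=V_{d},
\]
which completes the induction.

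For the consequences, $P_{\beta}\delta_{z}=\delta_{1-z}$, and $\beta$ preserves $\Zp\setminus\{0,1\}$. Applying $P_{\beta}V_{d}=V_{d}$ at $d=d(p,E)$ gives $\beta(Z)=Z$. Since $\beta$ is an involution, $Z$ splits into $\beta$-orbits of size $1$ or $2$. The fixed points solve $1-z=z$, that is $2z=1$, so $z=2^{-1}$; this is a legitimate element of $\Zp$ because $p$ is odd, and $2^{-1}\notin\{0,1\}$ because $p\ne2$. Hence $|Z|\equiv[2^{-1}\in Z]\pmod 2$, which is the stated equivalence. Finally, a singleton $Z=\{z\}$ must be $\beta$-stable, which forces $z=2^{-1}$.

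There is no real obstacle here. The only points needing care are that $\beta$ is not in $\Zp\rtimes E$ unless $-1\in E$, which is exactly our hypothesis, and that $\DA$ is preserved, which needs the block computation $\beta(B_{s,t})=B_{t,s}$ rather than a generator-by-generator argument.
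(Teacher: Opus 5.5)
Your proof is correct and follows the paper's argument essentially step for step: $P_{\beta}V_{0}=V_{0}$, conjugation by $P_{\beta}$ fixes each $T_{s}$ and sends $\mathrm{diag}(\chi_{B_{s,t}})$ to $\mathrm{diag}(\chi_{B_{t,s}})$ (both using $-C_{s}=C_{s}$), induction on $d$, and the parity count for the involution $\beta$ on $Z(p,E)$. One small quibble with your closing remark: a generator-by-generator argument works equally well, since $\beta(C_{s})=1+C_{s}$ means conjugation by $P_{\beta}$ simply swaps $E^{*}_{s}(0)$ and $E^{*}_{s}(1)$, and this is exactly how the paper obtains $\beta(B_{s,t})=B_{t,s}$.
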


\begin{proof}
The map $\beta$ is affine and involutive, with $\beta(0)=1$ and $\beta(1)=0$, and
$P_{\beta}=P_{\tau_{1}}R_{-1}$; hence $P_{\beta}V_{0}=V_{0}$. Conjugation by a permutation matrix
$P_{\alpha}$ carries $\mathrm{diag}(\chi_{A})$ to $\mathrm{diag}(\chi_{\alpha(A)})$. Since
$-C_{s}=C_{s}$ we have $\beta(C_{s})=1+C_{s}$ and $\beta(1+C_{s})=C_{s}$, so
$\beta(B_{s,t})=B_{t,s}$: conjugation by $P_{\beta}$ permutes the blocks and therefore preserves
$\DA$. It fixes each $T_{s}$, by the computation in the proof of
Lemma~\ref{lem:equivariance} together with $-C_{s}=C_{s}$, and therefore preserves $\BM$.
Induction on $d$ gives $P_{\beta}V_{d}=V_{d}$, and $P_{\beta}\delta_{z}=\delta_{1-z}$ gives the
stability of $Z(p,E)$. Finally $\beta$ restricts to an involution of $\Zp\setminus\{0,1\}$ whose
only fixed point is $2^{-1}$, because $z=1-z$ forces $2z=1$; a $\beta$-stable subset thus has
odd cardinality exactly when it contains $2^{-1}$, and a one-element $\beta$-stable subset
must be $\{2^{-1}\}$.
\end{proof}

\section{The quadratic threshold}\label{sec:threshold}

By Corollary~\ref{cor:onepoint} it suffices, for each pair $(p,E)$, to exhibit one point mass in
$W(0,1)$. We show here that for $p$ large relative to the type this is automatic, and that the
resulting bound improves the threshold of \cite{BBC07} from exponential to quadratic.

The blocks $C_{s}\cap(1+C_{t})$ with $1\le s,t\le r$ partition $\Zp\setminus\{0,1\}$, and by
Corollary~\ref{cor:chassaniol} a single singleton block suffices. Singleton blocks are
unavoidable once $p$ is large relative to the type.

\begin{lemma}[Confinement]\label{lem:confine}
Let $|E|=k$ and let $x\in\Zp\setminus\{0,1\}$ lie in a block of the basepoint partition of size
at least two. Then
\[
x\;=\;\frac{1-e'}{e-e'}\qquad\text{for some }e,e'\in E\setminus\{1\}\text{ with }e\ne e' .
\]
Consequently at most $(k-1)(k-2)$ elements of $\Zp\setminus\{0,1\}$ lie in blocks of size at
least two.
\end{lemma}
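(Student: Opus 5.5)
Let $x\in\Zp\setminus\{0,1\}$ lie in a block $B=C_{s}\cap(1+C_{t})$ with $1\le s,t\le r$ and $|B|\ge2$, and pick $x'\in B$ with $x'\ne x$. Since $x,x'$ lie in the same coset $C_{s}$ of $E$, we have $x'=ex$ for a unique $e\in E$, and $e\ne1$ because $x'\ne x$. Since $x-1,x'-1$ lie in the same coset $C_{t}$ (both are nonzero because $x,x'\ne1$), we have $x'-1=e'(x-1)$ for a unique $e'\in E$, and $e'\ne1$ because $x'\ne x$.

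Eliminating $x'$ gives $ex-1=e'x-e'$, that is $(e-e')x=1-e'$. If $e=e'$, this would force $1-e'=0$, i.e.\ $e'=1$, which is excluded. Hence $e\ne e'$, and
\[
x=\frac{1-e'}{e-e'},
\]
as claimed. The only care needed is the degenerate case $e=e'$, which the argument just excluded. It is also worth confirming that nothing requires $x'\notin\{0,1\}$ separately: this holds automatically because $B\subseteq\Zp\setminus\{0,1\}$.

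For the count, the map $(e,e')\mapsto(1-e')/(e-e')$, defined on ordered pairs of distinct elements of $E\setminus\{1\}$, has domain of size $(k-1)(k-2)$. By the first part, every element of $\Zp\setminus\{0,1\}$ lying in a block of size at least two is in its image. The bound $(k-1)(k-2)$ follows.

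The argument is elementary linear algebra over the field $\Zp$; the only point needing attention is ruling out $e=e'$ so that the division is legitimate. I would not attempt to sharpen the count here, for instance by exploiting that $x'$ produces the pair $(e^{-1},e'^{-1})$. That sharpening belongs to the sharpness remark that the paper announces separately.
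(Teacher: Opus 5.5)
Your proof is correct and matches the paper's argument essentially step for step: the multipliers $e,e'$ come from the two coset conditions, elimination gives $(e-e')x=1-e'$, the case $e=e'$ is excluded, and the count is the size of the image of $(e,e')\mapsto(1-e')(e-e')^{-1}$ on the $(k-1)(k-2)$ ordered pairs. Your closing aside is slightly off but does not affect the proof. Remark~\ref{rem:sharp} asserts that the bound $(k-1)(k-2)$ is attained, so it cannot be improved. Moreover, the pair $(e^{-1},e'^{-1})$ maps to $x'$, not to $x$, so it gives no double counting to exploit.
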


\begin{proof}
Let $x\in C_{s}\cap(1+C_{t})$ and let $y\ne x$ lie in the same block. Since $x,y\in C_{s}$ and
the classes are the orbits of $E$ acting by multiplication, $y=ex$ for some $e\in E$; since
$x-1$ and $y-1$ both lie in $C_{t}$, likewise $y-1=e'(x-1)$ for some $e'\in E$. As $x\ne0$ and
$y\ne x$ we get $e\ne1$; as $x\ne1$ and $y-1\ne x-1$ we get $e'\ne1$. Eliminating $y$ gives
$ex-1=e'x-e'$, that is
\[
(e-e')\,x\;=\;1-e' .
\]
If $e=e'$ then $1-e'=0$, contradicting $e'\ne1$; hence $e\ne e'$ and $x=(1-e')(e-e')^{-1}$.

For the count, the displayed formula exhibits the set of such $x$ as contained in the image of
the map $(e,e')\mapsto(1-e')(e-e')^{-1}$ defined on the pairs with $e,e'\in E\setminus\{1\}$ and
$e\ne e'$. There are $(k-1)(k-2)$ such pairs, so the image has at most $(k-1)(k-2)$ elements;
the map need not be injective, but that can only make the image smaller.
\end{proof}

\begin{theorem}[Quadratic threshold]\label{thm:threshold}
Let $p\ge5$ be prime and let $E\lneq\Zpx$ be a subgroup with $-1\in E$ and $|E|=k$. If
$p>(k-1)(k-2)+2$, then some block $C_{s}\cap(1+C_{t})$ is a singleton and $(p,E)$ is rigid.
Consequently a circulant graph of prime order $p$ and type $k$ with $p>(k-1)(k-2)+2$ has no
quantum symmetry unless it is complete or empty.
\end{theorem}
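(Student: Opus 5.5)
The argument is a direct count built on Lemma~\ref{lem:confine}, followed by an appeal to Corollary~\ref{cor:chassaniol}.

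The set $\Zp\setminus\{0,1\}$ has $p-2$ elements. By the remark preceding Lemma~\ref{lem:confine}, it is partitioned by the blocks $C_{s}\cap(1+C_{t})$ with $1\le s,t\le r$. Lemma~\ref{lem:confine} shows that at most $(k-1)(k-2)$ of these elements lie in blocks of size at least two. When $p-2>(k-1)(k-2)$, some element $x\in\Zp\setminus\{0,1\}$ therefore lies in a block of size one. This block is nonempty, since it contains $x$, and so it is a singleton $C_{s}\cap(1+C_{t})=\{x\}$. In other words, some cyclotomic number of order $r$ for $p$ equals $1$.

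Corollary~\ref{cor:chassaniol} then shows that $(p,E)$ is rigid, since $E$ is proper by hypothesis. This proves the first assertion.

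For the graph-theoretic consequence, let $X$ be a circulant graph of prime order $p$ and type $k$ that is neither complete nor empty. By Section~\ref{sec:circulant}, its multiplier group $E$ is then a proper subgroup of $\Zpx$ containing $-1$, with $|E|=k$. Note that $p\ge5$ is needed for the standing hypotheses. If $p\le3$, then $\Zpx$ has order at most $2$. The only proper subgroup containing $-1$ would then be trivial, which forces $-1=1$, i.e.\ $p=2$, and in that case every graph is complete or empty. So small primes are vacuous and may be set aside, although this edge case should be stated explicitly in the proof. The first part now applies to $(p,E)$, and by the definition of rigidity $X$ has no quantum symmetry.

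The only step needing care is the count. One must check that Lemma~\ref{lem:confine} indeed bounds the number of \emph{elements}, not blocks, lying in non-singleton blocks, and that the strict inequality $p-2>(k-1)(k-2)$ is exactly the stated hypothesis. Both are immediate, so I expect no real obstacle: the substance already sits in Lemma~\ref{lem:confine} and Theorem~\ref{thm:capture}.
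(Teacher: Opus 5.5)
Your proposal is correct and follows the paper's proof essentially verbatim: Lemma~\ref{lem:confine} caps the elements of $\Zp\setminus\{0,1\}$ lying in non-singleton blocks at $(k-1)(k-2)<p-2$, so some block is a singleton, and Corollary~\ref{cor:chassaniol} then gives rigidity. The graph statement follows as you say, since a nontrivial circulant of type $k$ has a proper multiplier group of order $k$ containing $-1$; the paper cites Proposition~\ref{prop:Eonly} at this point, and that proposition is what underlies your appeal to the definition of rigidity. Your aside on $p\le3$ is unnecessary, because $p\ge5$ is a hypothesis, and its reasoning is slightly off (for $p=3$ the trivial subgroup does not contain $-1=2$), though its conclusion is right.
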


\begin{proof}
The blocks partition the $p-2$ elements of $\Zp\setminus\{0,1\}$. By Lemma~\ref{lem:confine} the
blocks of size at least two cover at most $(k-1)(k-2)$ of them, so if $p-2>(k-1)(k-2)$ some
element lies in a block of size one, and Corollary~\ref{cor:chassaniol} applies. For the last
assertion, a nontrivial circulant $p$-graph of type $k$ has $E\lneq\Zpx$ with $-1\in E$ and
$|E|=k$, and Proposition~\ref{prop:Eonly} reduces it to $X_{E}$.
\end{proof}

\begin{remark}\label{rem:sharp}
The counting in Lemma~\ref{lem:confine} is sharp: the number of elements lying in blocks of size
at least two equals $(k-1)(k-2)$ for many pairs $(p,E)$ --- for instance for every $p\equiv1
\pmod4$ with $13\le p\le100$ and $k=4$, where the count is $6$ --- so the map
$(e,e')\mapsto(1-e')(e-e')^{-1}$ is then injective and every value lies in a block of size at
least two. No improvement of the bound $(k-1)(k-2)$ by this argument is therefore possible.
\end{remark}

\begin{remark}\label{rem:compareBBC}
Theorem~\ref{thm:threshold} improves the threshold $p>6^{\varphi(k)}$ of
\cite[Thm.~5.1, Lem.~5.2]{BBC07} for every even $k\ge2$, replacing a bound exponential in the
type by a quadratic one:
\[
\begin{array}{r|rrrrrrr}
k & 2 & 4 & 6 & 8 & 10 & 12 & 30\\\hline
(k-1)(k-2)+2 & 2 & 8 & 22 & 44 & 74 & 112 & 814\\
6^{\varphi(k)} & 6 & 36 & 36 & 1296 & 1296 & 1296 & 1679616
\end{array}
\]
The improvement is not an artefact of the two-basepoint machinery: Lemma~\ref{lem:confine}
combined with \cite[Thm.~6.16]{Cha19} already yields it. It does not appear to have been observed
previously; \cite{Cha19} treats the depth-one criterion as a test to be run on individual pairs
rather than as a statement valid for all sufficiently large $p$.
\end{remark}

\section{Certificates and the classification up to type ten}\label{sec:certificates}

Below the threshold of Theorem~\ref{thm:threshold} a capture must be exhibited by hand. We do so
in a uniform format, and then combine the two ingredients to settle every type at most $10$.

\begin{definition}[Depth-two and depth-three certificates]\label{def:cert}
Recall from Definition~\ref{def:blocks} that $\chi_{B}\in W(0,1)$ for every block $B$ of the
basepoint partition, and that multiplication by $\chi_{B}$ lies in $\TA(0,1)$. Since also
$T_{u}\in\TA(0,1)$, the vector
\[
v\;:=\;T_{u}\chi_{B},\qquad v(x)=\bigl|\,(x-B)\cap C_{u}\,\bigr|,
\]
lies in $W(0,1)$ for every block $B$ and every $1\le u\le r$, as does $\chi_{D}\odotp v-m\chi_{D}$
for every block $D$ and every $m\in\Z$. Here $v(x)$ counts the representations $x=b+c$ with
$b\in B$ and $c\in C_{u}$. Consequently:
\begin{enumerate}
\item[\textup{(C2)}] if $v$ takes exactly two values $m_{1}\ne m_{2}$ on a block $D$, the value
$m_{1}$ being attained at exactly one point $z\in D\setminus\{0,1\}$, then
\[
\delta_{z}=\frac{1}{m_{1}-m_{2}}\Bigl(\chi_{D}\odotp v-m_{2}\chi_{D}\Bigr)\in W(0,1);
\]
\item[\textup{(C3)}] if $v_{1}:=\chi_{D}\odotp v-m\chi_{D}$ with $m=\min_{x\in D}v(x)$, and
$v_{2}:=T_{u'}v_{1}$ takes exactly two values $m_{1}\ne m_{2}$ on a block $D'$, the value
$m_{1}$ being attained at exactly one point $z\in D'\setminus\{0,1\}$, then
\[
\delta_{z}=\frac{1}{m_{1}-m_{2}}\Bigl(\chi_{D'}\odotp v_{2}-m_{2}\chi_{D'}\Bigr)\in W(0,1).
\]
\end{enumerate}
In the notation of \eqref{eq:filtration}, $\chi_{B}$ and $\chi_{D}$ lie in $V_{1}$, so the vector
$\chi_{D}\odotp v-m\chi_{D}$ of \textup{(C2)} lies in $V_{2}$ and the vector
$\chi_{D'}\odotp v_{2}-m_{2}\chi_{D'}$ of \textup{(C3)} lies in $V_{3}$. A certificate of type
\textup{(C2)} therefore shows $d(p,E)\le2$, and one of type \textup{(C3)} shows $d(p,E)\le3$.
\end{definition}

We now record the four certificates. Each is verified by listing a sumset $B+C_{u}$, a
computation involving at most $|B|\cdot|C_{u}|\le30$ additions in $\Zp$. Throughout, classes are
labelled as in Section~\ref{sec:circulant}.

\subsection*{\texorpdfstring{$p=31$, $E=(\Z_{31}^{\times})^{3}$ (the graph $C_{31}(2,4,8,15)$)}{p=31, E = cubic residues}}
Here $r=3$ and
\[
\begin{aligned}
C_{1}&=\{1,2,4,8,15,16,23,27,29,30\}, &\qquad C_{2}&=\{3,6,7,12,14,17,19,24,25,28\},\\
C_{3}&=\{5,9,10,11,13,18,20,21,22,26\}. &&
\end{aligned}
\]
Take
\[
B=C_{3}\cap(1+C_{1})=\{5,9\},\qquad u=1,\qquad
D=C_{2}\cap(1+C_{3})=\{6,12,14,19\}.
\]
Indeed $1+C_{1}=\{0,2,3,5,9,16,17,24,28,30\}$ meets $C_{3}$ in $\{5,9\}$, while the set
$1+C_{3}=\{6,10,11,12,14,19,21,22,23,27\}$ meets $C_{2}$ in $\{6,12,14,19\}$. The relevant
sumset is
\[
5+C_{1}=\{1,3,4,6,7,9,13,20,21,28\},\qquad
9+C_{1}=\{1,5,7,8,10,11,13,17,24,25\},
\]
so $(B+C_{1})\cap D=\{6\}$, attained once. Hence $v=T_{1}\chi_{B}$ satisfies $v(6)=1$ and
$v(12)=v(14)=v(19)=0$, so by \textup{(C2)}
\[
\delta_{6}\;=\;\chi_{D}\odotp T_{1}\chi_{B}\;\in\;W(0,1).
\]

\subsection*{\texorpdfstring{$p=41$, $E=(\Z_{41}^{\times})^{4}$ (the graph $C_{41}(4,10,16,18)$)}{p=41, E = quartic residues}}
Here $r=4$ and
\[
\begin{aligned}
C_{1}&=\{1,4,10,16,18,23,25,31,37,40\}, &\qquad C_{2}&=\{2,5,8,9,20,21,32,33,36,39\},\\
C_{3}&=\{3,7,11,12,13,28,29,30,34,38\}, &\qquad C_{4}&=\{6,14,15,17,19,22,24,26,27,35\}.
\end{aligned}
\]
Take
\[
B=C_{3}\cap(1+C_{2})=\{3,34\},\qquad u=1,\qquad D=C_{2}\cap(1+C_{1})=\{2,5,32\},
\]
using $1+C_{2}=\{3,6,9,10,21,22,33,34,37,40\}$ and
$1+C_{1}=\{0,2,5,11,17,19,24,26,32,38\}$. The sumset is
\[
3+C_{1}=\{2,4,7,13,19,21,26,28,34,40\},\qquad
34+C_{1}=\{3,9,11,16,18,24,30,33,35,38\},
\]
so $(B+C_{1})\cap D=\{2\}$, attained once. Hence by \textup{(C2)}
\[
\delta_{2}\;=\;\chi_{D}\odotp T_{1}\chi_{B}\;\in\;W(0,1).
\]

\subsection*{\texorpdfstring{$p=13$, $E=(\Z_{13}^{\times})^{2}$ (the Paley graph $P_{13}$)}{p=13, Paley graph P13}}
Here $r=2$, $C_{1}=\{1,3,4,9,10,12\}$ and $C_{2}=\{2,5,6,7,8,11\}$. Take
\[
B=C_{1}\cap(1+C_{1})=\{4,10\},\qquad u=1,\qquad D=C_{2}\cap(1+C_{1})=\{2,5,11\},
\]
using $1+C_{1}=\{0,2,4,5,10,11\}$. For $x\in D$ the value $v(x)=|(x-B)\cap C_{1}|$ is
\[
v(2)=\bigl|\{11,5\}\cap C_{1}\bigr|=0,\qquad
v(5)=\bigl|\{1,8\}\cap C_{1}\bigr|=1,\qquad
v(11)=\bigl|\{7,1\}\cap C_{1}\bigr|=1 .
\]
Thus $v$ is two-valued on $D$ and the value $0$ is attained only at $2$, so by \textup{(C2)}
\[
\delta_{2}\;=\;\chi_{D}-\chi_{D}\odotp T_{1}\chi_{B}\;\in\;W(0,1).
\]

\subsection*{\texorpdfstring{$p=17$, $E=(\Z_{17}^{\times})^{2}$ (the Paley graph $P_{17}$)}{p=17, Paley graph P17}}
Here $r=2$, $C_{1}=\{1,2,4,8,9,13,15,16\}$ and $C_{2}=\{3,5,6,7,10,11,12,14\}$. This is the one
case among the four requiring depth three. Take
\[
B=C_{1}\cap(1+C_{1})=\{2,9,16\},\qquad u=1,\qquad D=C_{1}\cap(1+C_{2})=\{4,8,13,15\},
\]
using $1+C_{1}=\{0,2,3,5,9,10,14,16\}$ and $1+C_{2}=\{4,6,7,8,11,12,13,15\}$. For $x\in D$,
\[
\begin{aligned}
v(4)&=\bigl|\{2,12,5\}\cap C_{1}\bigr|=1, &\qquad
v(8)&=\bigl|\{6,16,9\}\cap C_{1}\bigr|=2,\\
v(13)&=\bigl|\{11,4,14\}\cap C_{1}\bigr|=1, &\qquad
v(15)&=\bigl|\{13,6,16\}\cap C_{1}\bigr|=2 .
\end{aligned}
\]
The minimum on $D$ is $m=1$, so $v_{1}=\chi_{D}\odotp v-\chi_{D}=\chi_{\{8,15\}}\in W(0,1)$. Now
put $v_{2}=T_{1}v_{1}$ and $D'=B=\{2,9,16\}$:
\[
v_{2}(2)=\bigl|\{11,4\}\cap C_{1}\bigr|=1,\qquad
v_{2}(9)=\bigl|\{1,11\}\cap C_{1}\bigr|=1,\qquad
v_{2}(16)=\bigl|\{8,1\}\cap C_{1}\bigr|=2 .
\]
Hence $v_{2}$ is two-valued on $D'$ with the value $2$ attained only at $16$, and by
\textup{(C3)}
\[
\delta_{16}\;=\;\chi_{D'}\odotp T_{1}\bigl(\chi_{D}\odotp T_{1}\chi_{B}-\chi_{D}\bigr)-
\chi_{D'}\;\in\;W(0,1).
\]

\begin{theorem}\label{thm:3141}
The graphs $C_{31}(2,4,8,15)$ and $C_{41}(4,10,16,18)$ have no quantum symmetry. The same holds
for the Paley graphs $P_{13}$ and $P_{17}$.
\end{theorem}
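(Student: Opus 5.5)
The plan is to read off each case from the certificate just displayed, feed it into Corollary~\ref{cor:onepoint}, and then pass from the pair $(p,E)$ to the specific graph through Proposition~\ref{prop:Eonly}.

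First, for each of the four pairs $(31,(\Z_{31}^{\times})^{3})$, $(41,(\Z_{41}^{\times})^{4})$, $(13,(\Z_{13}^{\times})^{2})$ and $(17,(\Z_{17}^{\times})^{2})$, I will confirm that $E$ is a proper subgroup of $\Zpx$ containing $-1$. This is immediate, since $|E|=10,10,6,8$ is less than $p-1$, and $-1=p-1$ appears in each listed $C_{1}$.

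Next, I will check that each certificate actually lies in $W(0,1)$. The sets $B$, $D$ and $D'$ are blocks of the basepoint partition: each is written as $C_{s}\cap(1+C_{t})$ with $1\le s,t\le r$, and the displayed sets $1+C_{t}$ show this intersection is exactly the stated set. So by Definition~\ref{def:blocks}, $\chi_{B}$ lies in $W(0,1)$, and multiplication by $\chi_{D}$ or $\chi_{D'}$ lies in $\TA(0,1)$. Since $T_{1}\in\BM\subseteq\TA(0,1)$, the expressions of types (C2) and (C3) are elements of $W(0,1)$. The only genuine verification is the value table of $v$ (respectively $v_{2}$) on $D$ (respectively $D'$). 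That table comes from the listed sumsets $b+C_{1}$, or from $x-B$ intersected with $C_{1}$, and it shows the vector is two-valued with the distinguished value attained at a single point. This gives $\delta_{6}$, $\delta_{2}$, $\delta_{2}$ and $\delta_{16}$ respectively, each with $z\notin\{0,1\}$.

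By Lemma~\ref{lem:WsubR}, $W(0,1)\subseteq\Rmod(0,1)$, so Corollary~\ref{cor:onepoint}, which rests on Theorem~\ref{thm:capture}, gives $\Rmod(0,1)=\C^{p}$ and rigidity of $(p,E)$. Finally, I will identify each graph with its pair. By Remark~\ref{rem:GP}, the connection set of $C_{31}(2,4,8,15)$ is $(\Z_{31}^{\times})^{3}$ and that of $C_{41}(4,10,16,18)$ is $(\Z_{41}^{\times})^{4}$. For $P_{13}$ and $P_{17}$ the connection set is the group of quadratic residues. In each case the connection set $S$ is itself the subgroup $E$, so its multiplier group is $\{a:aS=S\}=E$. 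Rigidity then says $A(X)$ is commutative, by Theorem~\ref{thm:criterion} in its $\Rmod$ form together with Proposition~\ref{prop:Eonly}.

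The main obstacle is not conceptual. It is making sure every finite check is correct: the coset lists, the block computations, and especially the depth-three $P_{17}$ case, where the intermediate vector $v_{1}=\chi_{\{8,15\}}$ must be computed exactly before applying $T_{1}$. I would re-derive each coset from powers of a generator (for example $3$ mod $31$ cubed, and $6$ mod $41$ to the fourth) and recheck each sumset entry by entry.
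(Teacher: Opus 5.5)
Your proposal is correct and follows the paper's own proof. The paper identifies each graph with $X_{E}$ via Remark~\ref{rem:GP}, observes that each certificate gives a point mass $\delta_{z}$ with $z\notin\{0,1\}$ in $W(0,1)\subseteq\Rmod(0,1)$, and applies Corollary~\ref{cor:onepoint}. The additional checks you list (that $E$ is proper and contains $-1$, that the multiplier group of $S=E$ is $E$, and the recomputation of the cosets, blocks and value tables) are routine verifications the paper leaves implicit, and they all hold.
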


\begin{proof}
By Remark~\ref{rem:GP} the four graphs are $X_{E}$ for the four pairs $(p,E)$ treated above. In
each case a point mass $\delta_{z}$ with $z\notin\{0,1\}$ lies in $W(0,1)$, so
Corollary~\ref{cor:onepoint} applies.
\end{proof}

To complete the classification for type at most $10$ it remains to enumerate the pairs that
Theorem~\ref{thm:threshold} does not cover, and to treat those not resolved by the certificates
above.

\begin{proposition}\label{prop:eightpairs}
Let $p\ge5$ be prime, let $k\in\{2,4,6,8,10\}$ and let $E\lneq\Zpx$ be a subgroup of order $k$
with $-1\in E$, so that $p\equiv1\pmod k$ and $p-1>k$. If $p\le(k-1)(k-2)+2$, then $(p,k)$ is one of the eight pairs
\[
(13,6),\quad(19,6),\quad(17,8),\quad(41,8),\quad(31,10),\quad(41,10),\quad(61,10),\quad(71,10).
\]
For the four pairs $(19,6)$, $(41,8)$, $(61,10)$, $(71,10)$ the basepoint partition has a
singleton block. Writing $E=(\Zpx)^{(p-1)/k}$ in each case, these are
\[
\begin{array}{lll}
p=19,\ k=6: & E=\{1,7,8,11,12,18\}, & (2E)\cap(1+E)=\{2\};\\[2pt]
p=41,\ k=8: & E=\{1,3,9,14,27,32,38,40\}, & (4E)\cap(1+8E)=\{12\};\\[2pt]
p=61,\ k=10: & E=\{1,3,9,20,27,34,41,52,58,60\}, & (8E)\cap(1+2E)=\{8\};\\[2pt]
p=71,\ k=10: & E=\{1,5,14,17,25,46,54,57,66,70\}, & (2E)\cap(1+E)=\{2\}.
\end{array}
\]
The remaining four pairs are $(13,6)$, $(17,8)$, $(31,10)$, $(41,10)$, treated above.
\end{proposition}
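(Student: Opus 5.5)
The proof splits into an enumeration step and four finite checks.

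\emph{Enumeration.} Since $E\le\Zpx$ has order $k$, we need $k\mid p-1$. Properness gives $p-1>k$, so $p\ge 2k+1$. I will then list, for each $k\in\{2,4,6,8,10\}$, the primes $p\equiv1\pmod k$ with $2k+1\le p\le(k-1)(k-2)+2$:
\begin{itemize}
\item $k=2$: the bound is $2$, so there is no prime.
\item $k=4$: the bound is $8$, and $p\ge9$, so there is none.
\item $k=6$: the bound is $22$, and $p\ge13$, giving $13,19$.
\item $k=8$: the bound is $44$, and $p\ge17$, giving $17,41$.
\item $k=10$: the bound is $74$, and $p\ge21$, giving $31,41,61,71$.
\end{itemize}
In every case $-1\in E$ is automatic, since $k$ is even and $\Zpx$ is cyclic. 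For the same reason $E$ is the unique subgroup $(\Zpx)^{(p-1)/k}$. This yields exactly the eight listed pairs.

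\emph{Singleton blocks.} For the four pairs $(19,6)$, $(41,8)$, $(61,10)$, $(71,10)$ I will proceed as follows:
\begin{itemize}
\item Compute $E$ as the set of $(p-1)/k$-th powers, or as the powers of a suitable generator. I will check that the displayed list is closed under multiplication and has $k$ elements.
\item Form the cosets $gE$ and $1+hE$ for the indicated $g,h$.
\item Intersect them by direct comparison of the two $k$-element lists, verifying that the intersection is the displayed singleton.
\end{itemize}
Since $gE$ is a class $C_s$ and $1+hE$ is a class $1+C_t$, the resulting $\{z\}$ is a singleton block of the basepoint partition. Corollary~\ref{cor:chassaniol} then makes these pairs rigid. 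Each check is at most $2k\le20$ multiplications and a comparison of two lists mod $p$.

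\emph{Main obstacle.} The main difficulty is only the bookkeeping of the singleton claims, in particular confirming the orders and cosets for $p=61,71$. For example, for $p=71$ I will confirm $5^7\equiv 57$ generates the order-$10$ subgroup, and compute $2E=\{2,10,28,34,50,21,37,43,61,69\}$ against $1+E=\{2,6,15,18,26,47,55,58,67,0\}$, whose only common element is $2$. The remaining four pairs need no new work: they were handled by the certificates preceding Theorem~\ref{thm:3141}.
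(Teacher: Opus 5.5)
Your proposal is correct and follows the paper's proof: the same congruence-and-bound enumeration for each $k$, then direct listing of the two cosets for each of the four singleton blocks, with $p=71$ written out. One slip in your illustrative remark: $5^{5}\equiv1\pmod{71}$, so $5^{7}\equiv25$ rather than $57$, and $57=5^{4}$ has order $5$, so it does not generate the order-$10$ subgroup. A correct generator is $14=7^{7}$ (or $-5\equiv66$). Your separate check that the displayed list is closed under multiplication and has $k$ elements already pins down $E$, so the remark is not needed in any case.
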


\begin{proof}
The enumeration is immediate. For $k=2$ the bound reads $p\le2$, and for $k=4$ it reads $p\le8$
with $p\equiv1\pmod4$ and $p>5$; in neither case is there such a prime. For $k=6$ the bound
reads $p\le22$ with $p\equiv1\pmod6$ and $p>7$, giving $p\in\{13,19\}$; for $k=8$ it reads
$p\le44$ with $p\equiv1\pmod8$ and $p>9$, giving $p\in\{17,41\}$; and for $k=10$ it reads
$p\le74$ with $p\equiv1\pmod{10}$ and $p>11$, giving $p\in\{31,41,61,71\}$.

The four singleton blocks are verified by listing the two cosets involved. For instance, for
$p=71$ the subgroup of order $10$ is $E=(\Z_{71}^{\times})^{7}=\{1,5,14,17,25,46,54,57,66,70\}$,
and
\[
2E=\{2,10,21,28,34,37,43,50,61,69\},\qquad
1+E=\{0,2,6,15,18,26,47,55,58,67\},
\]
whose intersection is $\{2\}$. The other three are identical two-line verifications.
\end{proof}

\begin{remark}\label{rem:optimal}
None of the four pairs $(13,6)$, $(17,8)$, $(31,10)$, $(41,10)$ has a singleton block, by
Proposition~\ref{prop:eightpairs}, so $d(p,E)\ge2$ for each of them by Lemma~\ref{lem:V1}. The
three certificates of type \textup{(C2)} are therefore of optimal depth, and so is the
certificate for $P_{17}$: the computation of Section~\ref{sub:depths} gives $d(17,8)=3$.
\end{remark}

\begin{theorem}\label{thm:type10}
Every circulant graph of prime order and type at most $10$ has no quantum symmetry, unless it is
complete or empty.
\end{theorem}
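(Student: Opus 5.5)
The plan is to reduce the statement to the pairs $(p,E)$ and then split into the range covered by the threshold and the finitely many pairs below it. Let $X$ be a circulant graph of prime order $p$ and type $k\le10$ that is neither complete nor empty, with multiplier group $E$. The small primes have to be handled separately: for $p\in\{2,3\}$ every circulant on $p$ vertices is complete or empty (for $p=3$ the only symmetric connection sets are $\emptyset$ and $\{\pm1\}$). So we may take $p\ge5$. Then $E\lneq\Zpx$, $-1\in E$ and $k=|E|$ is even. By Proposition~\ref{prop:Eonly} (Remark~\ref{rem:GP}), the commutativity of $A(X)$ depends only on $(p,E)$. It therefore suffices to show that every such pair $(p,E)$ with $k\in\{2,4,6,8,10\}$ is rigid.

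If $p>(k-1)(k-2)+2$, Theorem~\ref{thm:threshold} gives rigidity directly. Otherwise, Proposition~\ref{prop:eightpairs} puts $(p,k)$ in its list of eight pairs. Since $\Zpx$ is cyclic, $E$ is the unique subgroup of order $k$, namely $E=(\Zpx)^{(p-1)/k}$, so each pair $(p,k)$ determines $E$.

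For $(19,6)$, $(41,8)$, $(61,10)$ and $(71,10)$, Proposition~\ref{prop:eightpairs} exhibits a singleton block of the basepoint partition. This is a cyclotomic number equal to $1$, so Corollary~\ref{cor:chassaniol} gives rigidity. For $(13,6)$, $(17,8)$, $(31,10)$ and $(41,10)$, the four certificates above each produce a point mass $\delta_{z}\in W(0,1)$ with $z\notin\{0,1\}$, and Corollary~\ref{cor:onepoint} applies. Here one checks that the groups used in the certificates are the right ones: the certificates are stated for $(\Z_{13}^{\times})^{2}$ and $(\Z_{17}^{\times})^{2}$, and these are the unique subgroups of orders $6$ and $8$, so they match the pairs $(13,6)$ and $(17,8)$. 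Similarly, $(31,10)$ and $(41,10)$ correspond to the cubic and quartic residues.

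The main point needing care is bookkeeping rather than mathematics. First, the enumeration in Proposition~\ref{prop:eightpairs} must be exhaustive, including the exclusion of $E=\Zpx$ via $p-1>k$. Second, each certificate must have been computed for the correct subgroup. Once both are confirmed, rigidity of $(p,E)$ means $A(X)$ is commutative for every circulant with that multiplier group, which is the claim.
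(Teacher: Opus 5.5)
Your proposal is correct and follows essentially the same route as the paper's proof. Both reduce to $X_{E}$ via Proposition~\ref{prop:Eonly} and apply Theorem~\ref{thm:threshold} above the quadratic bound; for the eight pairs of Proposition~\ref{prop:eightpairs} they use either a singleton block (Corollary~\ref{cor:chassaniol}) or one of the four certificates (Corollary~\ref{cor:onepoint}, i.e.\ Theorem~\ref{thm:3141}). Your explicit disposal of $p\in\{2,3\}$, where every circulant is complete or empty, is a small point that the paper's proof passes over by assuming $p\ge5$ from the outset.
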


\begin{proof}
Let $X$ be a nontrivial circulant $p$-graph, $p\ge5$, of type $k=|E|\le10$; recall that $k$ is
even and $E\lneq\Zpx$, and that by Proposition~\ref{prop:Eonly} we may replace $X$ by $X_{E}$.
If $p>(k-1)(k-2)+2$ then Theorem~\ref{thm:threshold} applies. Otherwise $(p,k)$ is one of the
eight pairs of Proposition~\ref{prop:eightpairs}; four of them have a singleton block, so
Corollary~\ref{cor:chassaniol} applies, and the other four are Theorem~\ref{thm:3141}.
\end{proof}

Theorem~\ref{thm:type10} answers the question of \cite[\S6.4]{Cha19} in the affirmative for type
$10$. Its proof is finite and entirely self-contained: it invokes no earlier threshold theorem,
and consists of Theorem~\ref{thm:threshold}, the four certificates above and the four singleton
blocks of Proposition~\ref{prop:eightpairs}. Every arithmetic assertion in it is a short list of
additions and multiplications modulo a prime below $72$.

\section{Verification for \texorpdfstring{$p\le250$}{p<=250} and Paley graphs}\label{sec:sweep}

Beyond type $10$ the quadratic threshold no longer applies. For a generalized Paley graph
$k=(p-1)/r$, so that $(k-1)(k-2)+2>p$ whenever $r$ is small; this is precisely the dense regime
in which the threshold fails. There we have verified the hypothesis of
Theorem~\ref{thm:criterion} computationally.

\begin{theorem}[computer-assisted]\label{thm:sweep}
For every prime $5\le p\le250$ and every proper subgroup $E\le\Zpx$ with $-1\in E$ one has
$W(0,1)=\C^{p}$. The number of such pairs $(p,E)$ is $214$.
\end{theorem}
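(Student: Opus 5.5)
The plan is to make the statement a finite, exact computation over all pairs and to reduce the work as far as possible using the results already proved. First we enumerate the pairs. For each prime $5\le p\le250$ we list the divisors $k$ of $p-1$ with $k$ even and $k<p-1$. Because $\Zpx$ is cyclic, each such $k$ gives exactly one subgroup $E=(\Zpx)^{(p-1)/k}$, and it contains $-1$ precisely because $k$ is even. Counting these gives the number $214$, which is a routine divisor count that we would simply tabulate.

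Next we cut the work down. By Corollary~\ref{cor:onepoint} together with Theorem~\ref{thm:capture}, showing $W(0,1)=\C^{p}$ only requires one point mass $\delta_{z}$ with $z\notin\{0,1\}$ in $W(0,1)$. The reason is that the proof of Theorem~\ref{thm:capture} yields $K=\Zp$, and every captured $\delta_{z}$ that arises from $W$ stays in $W$. We must be careful at this point, though. Theorem~\ref{thm:capture} gives fullness of $\Rmod$, not of $W$, so to claim $W(0,1)=\C^{p}$ literally we would instead compute $W(0,1)$ directly, or at least check that it has dimension $p$. For the pairs with $p>(k-1)(k-2)+2$, Theorem~\ref{thm:threshold} already supplies a singleton block. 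Even then, literal fullness of $W$ still needs the dimension check, so in practice we would run the same algorithm on every pair and use the threshold only as a sanity check.

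The algorithm is the filtration \eqref{eq:filtration}. Start from $V_{0}$ and repeatedly apply the generators $T_{s}$ and the block projections $\mathrm{diag}(\chi_{B})$ for $B\in\mathcal P$. Maintain an echelon basis and stop when the dimension stabilises or reaches $p$. To keep the computation exact, every generator is taken as a $0/1$ integer matrix. We would do the linear algebra modulo a large prime $q$, or over $\Q$ with fraction-free elimination.

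The step I expect to be the main obstacle is justifying that a modular rank computation proves the rank over $\C$; this is what Lemma~\ref{lem:modq} is meant to supply. The argument runs as follows. The vectors produced are integer combinations, namely words in integer matrices applied to $\delta_{0}$ and $\delta_{1}$. If the spanning set we obtain has $p$ vectors that are linearly independent modulo $q$, then some $p\times p$ integer minor is nonzero modulo $q$. That minor is therefore a nonzero integer, so the vectors are independent over $\Q$ and hence over $\C$. This gives $\dim W(0,1)=p$.

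The direction matters. Rank can only drop modulo $q$, so a modular result of full rank is a certificate, while a modular stall would prove nothing. The computation for Paley-sized $p$ near $250$ is the costliest case, but it is still polynomial, roughly $O(p^{4})$ per pair, and therefore feasible.
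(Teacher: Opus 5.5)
Your proposal is correct and is essentially the paper's proof: enumerate the $214$ pairs by the divisor count, saturate $W(0,1)$ directly under the generators of $\TA(0,1)$ with an echelon basis, and certify full rank modulo a large prime via Lemma~\ref{lem:modq} (the paper also runs an exact computation over $\Q$ as a cross-check for $p\le60$). You are right to drop the initial appeal to Theorem~\ref{thm:capture} in favour of the direct computation, since that theorem yields fullness of $\Rmod(0,1)$, not literally of $W(0,1)$.
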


The algorithm, the arithmetic in which it is carried out and the form of the certificates
occupy the rest of this section; the scripts and their output are archived \cite{Mar26}.

\begin{corollary}\label{cor:dichotomy250}
A vertex-transitive graph of prime order $p$ with $5\le p\le250$ has quantum symmetry if and
only if it is complete or empty. In particular, for every prime $p\equiv1\pmod4$ with $p\le241$
the Paley graph $P_{p}$ has no quantum symmetry.
\end{corollary}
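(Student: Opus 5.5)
The plan is to derive Corollary~\ref{cor:dichotomy250} directly from Theorem~\ref{thm:sweep}, combining it with the reductions of Section~\ref{sec:circulant} and the criterion of Theorem~\ref{thm:criterion}. There are two implications to prove.

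\emph{Forward direction (nontrivial implies rigid).} Let $X$ be vertex-transitive of prime order $p$ with $5\le p\le250$, neither complete nor empty. By Turner's theorem $X$ is isomorphic to a circulant $p$-graph $\Cay(\Zp,S)$. Isomorphic graphs have isomorphic quantum permutation algebras, since conjugating the magic unitary by a permutation matrix preserves all the defining relations; so we may assume $X$ is the circulant itself. Its multiplier group $E$ contains $-1$ and is proper because $X$ is nontrivial. Theorem~\ref{thm:sweep} then gives $W(0,1)=\C^{p}$, and Theorem~\ref{thm:criterion} yields that $A(X)$ is commutative, i.e.\ $\Qut(X)=\Aut(X)$.

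\emph{Converse (trivial graphs have quantum symmetry).} If $X$ is complete or empty, then $d_X\in\{0,\,J-I\}$ commutes with every magic unitary, because rows and columns of $u$ sum to $1$ and hence $uJ=Ju$. Therefore $A(X)=A_s(p)$. This algebra is noncommutative for $p\ge4$ by \cite{Wang98}, as recalled in the remark following Corollary~\ref{cor:allornothing}. So $X$ has quantum symmetry.

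\emph{Paley case.} For prime $p\equiv1\pmod4$, the Paley graph $P_p$ is $\Cay(\Zp,E)$ with $E$ the subgroup of quadratic residues. This $E$ contains $-1$ and is proper, and $P_p$ is neither complete nor empty. The forward direction therefore covers every such $p\le250$. The largest prime $\equiv1\pmod4$ not exceeding $250$ is $241$, which gives the stated range.

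The main work is entirely in Theorem~\ref{thm:sweep}, which is assumed here. The only point needing care is the transfer along Turner's isomorphism. I would handle it by noting that the relabelled magic unitary $P u P^{-1}$ satisfies the commutation relation with the relabelled adjacency matrix, which gives an isomorphism $A(X)\cong A(X')$.
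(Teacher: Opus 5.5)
Your proof is correct and follows the paper's argument. Turner's theorem reduces to a nontrivial circulant whose multiplier group is proper and contains $-1$; Theorems~\ref{thm:sweep} and~\ref{thm:criterion} then give rigidity, and Wang's result handles the complete and empty graphs. You also spell out two steps the paper leaves implicit: invariance of $A(X)$ under relabelling of vertices, and the identity $uJ=Ju$, which shows $A(X)=A_{s}(p)$ in the trivial cases.
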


\begin{proof}
By Turner's theorem such a graph is circulant; if nontrivial, its multiplier group $E$ is a
proper subgroup of $\Zpx$ containing $-1$, and Theorems~\ref{thm:sweep} and~\ref{thm:criterion}
apply. Paley graphs are the case where $E$ is the group of quadratic residues. Complete and
empty graphs have $\Qut=S_{p}^{+}\ne S_{p}=\Aut$ for $p\ge4$ \cite{Wang98}.
\end{proof}

Theorem~\ref{thm:sweep} is a statement about $W(0,1)$, whereas Theorems~\ref{thm:A} and
\ref{thm:B} of the introduction are stated for the possibly larger module $\Rmod(0,1)$. No gap
arises, in either direction. Theorem~\ref{thm:criterion} is stated for $W$, so
Corollary~\ref{cor:dichotomy250} follows from it directly; and since
$W(0,1)\subseteq\Rmod(0,1)\subseteq\C^{p}$ by Lemma~\ref{lem:WsubR}, the equality
$W(0,1)=\C^{p}$ forces $\Rmod(0,1)=\C^{p}$ as well. The same remark applies to every vector
exhibited in Sections~\ref{sec:certificates} and \ref{sec:sweep}: each lies in $W(0,1)$, hence in
$\Rmod(0,1)$, so the hypotheses of the results stated for $\Rmod$ are met.

The count $214$ is elementary. Since $\Zpx$ is cyclic there is exactly one subgroup of each
order $k\mid p-1$, and it contains $-1$, the unique element of order $2$, precisely when $k$ is
even; even divisors of $p-1$ correspond bijectively to divisors of $(p-1)/2$ under
$k\mapsto k/2$. Excluding $E=\Zpx$ therefore leaves $d\bigl((p-1)/2\bigr)-1$ admissible
subgroups, where $d$ is the divisor-counting function, and
$\sum_{5\le p\le250}\bigl(d((p-1)/2)-1\bigr)=214$.

\subsection{The algorithm}
Fix $(p,E)$ and let $g_{1},\dots,g_{3(r+1)}$ enumerate the generators $T_{s}$, $E^{*}_{s}(0)$,
$E^{*}_{s}(1)$ of $\TA(0,1)$, each stored as a $p\times p$ matrix of zeros and ones. The
saturation maintains a row-echelon basis of the span computed so far, stored as a dictionary
from pivot position to reduced row, together with a frontier consisting of the rows inserted at
the previous round:
\begin{quote}\small
\texttt{basis} $\leftarrow\emptyset$; \texttt{frontier} $\leftarrow\emptyset$\\
\textbf{for} $v\in\{\delta_{0},\delta_{1}\}$: \textbf{if} \textsc{Insert}$(v)$ \textbf{then}
append $v$ to \texttt{frontier}\\
\textbf{while} \texttt{frontier} $\ne\emptyset$ \textbf{and} $|\texttt{basis}|<p$:\\
\hspace*{1.2em}\texttt{next} $\leftarrow\emptyset$\\
\hspace*{1.2em}\textbf{for} $v\in\texttt{frontier}$, \textbf{for} $i\le3(r+1)$: \textbf{if}
\textsc{Insert}$(g_{i}v)$ \textbf{then} append $g_{i}v$ to \texttt{next}\\
\hspace*{1.2em}\texttt{frontier} $\leftarrow$ \texttt{next}\\
\textbf{return} $|\texttt{basis}|=p$
\end{quote}
Here \textsc{Insert}$(v)$ reduces $v$ against the stored pivots and, if a nonzero row survives,
records it under its leading position and returns \emph{true}. Restricting the next round to the
frontier is legitimate, by induction on the round: if $v$ lies in the span of the rows inserted
before the current round, then $v$ is a linear combination of such rows, each of which was
already processed at an earlier round, so $g_{i}v$ lies in the span of the rows inserted by the
end of the current round. Since at most $p$ vectors are ever inserted, at most $3p(r+1)$ calls to
\textsc{Insert} occur in total, each costing $O(p^{2})$ arithmetic operations; the whole
saturation for a single pair therefore costs $O(rp^{3})$. In practice the frontier collapses
after a few rounds.

Two independent implementations were used. The first works over $\Q$ in exact rational
arithmetic; the second works over $\F_{q}$ with $q=1\,000\,003$. The second is a proof, not a
probabilistic certificate, by the following elementary observation.

\begin{lemma}\label{lem:modq}
Let $N\ge1$, let $v_{1},\dots,v_{m}\in\Z^{N}$ and let $q$ be a prime. If the reductions of
$v_{1},\dots,v_{m}$ modulo $q$ span $\F_{q}^{\,N}$, then $v_{1},\dots,v_{m}$ span $\Q^{\,N}$.
\end{lemma}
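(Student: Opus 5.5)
The plan is to reduce the statement to a determinant computation: spanning over $\F_{q}$ is a rank condition, and rank can only drop under reduction modulo $q$. Form the $N\times m$ integer matrix $V$ whose columns are $v_{1},\dots,v_{m}$, and let $\bar V$ be its reduction modulo $q$, an $N\times m$ matrix over $\F_{q}$. The hypothesis says that the columns of $\bar V$ span $\F_{q}^{\,N}$, so $\bar V$ has rank $N$; in particular $m\ge N$.

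Over a field, rank $N$ means that some $N\times N$ minor is nonzero. So I will choose column indices $j_{1}<\dots<j_{N}$ with $\det\bar V_{J}\ne0$ in $\F_{q}$, where $J=\{j_{1},\dots,j_{N}\}$. The key observation is that the determinant is a polynomial with integer coefficients in the entries. Reduction modulo $q$ is a ring homomorphism $\Z\to\F_{q}$, so it commutes with this polynomial, giving
\[
\det\bar V_{J}=\overline{\det V_{J}}.
\]
Hence the integer $\det V_{J}$ is not divisible by $q$. In particular it is nonzero.

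It follows that the $N$ vectors $v_{j_{1}},\dots,v_{j_{N}}$ are linearly independent in $\Q^{N}$, and therefore form a basis of $\Q^{N}$. The full family $v_{1},\dots,v_{m}$ then spans $\Q^{\,N}$, and hence also $\C^{N}$ after extending scalars, which is the form needed for $W(0,1)=\C^{p}$.

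There is no real obstacle; the only point needing care is the direction of the inequality. Rank can drop under reduction but never rise, because a nonvanishing minor modulo $q$ lifts to a nonvanishing integer minor. This is exactly why the modular computation is a proof and not merely probabilistic evidence. For the application, I would also note that the generators $g_{i}$ and the starting vectors $\delta_{0},\delta_{1}$ have integer entries, so all the words $g_{i_{j}}\cdots g_{i_{1}}\delta$ computed by the algorithm are integer vectors to which the lemma applies.
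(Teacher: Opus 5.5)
Your proof is correct and is essentially the paper's argument. A nonvanishing $N\times N$ minor modulo $q$ lifts to a nonzero integer minor, because determinants commute with reduction, and this forces rank $N$ over $\Q$. The paper places the $v_{i}$ as rows rather than columns, which makes no difference.
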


\begin{proof}
If the reductions span $\F_{q}^{\,N}$, some $N\times N$ minor of the integer matrix with rows
$v_{i}$ is nonzero modulo $q$, hence is a nonzero integer; so that matrix has rank $N$ over
$\Q$.
\end{proof}

The failure mode of the modular computation is therefore one-sided: a family that spans
$\Q^{\,p}$ might fail to span $\F_{q}^{\,p}$, in which case the computation reports failure; it
cannot report success for a family that does not span $\Q^{\,p}$. All the vectors occurring in
the saturation are integral, the generators being $0/1$ matrices and the seeds being $\delta_{0}$
and $\delta_{1}$, so Lemma~\ref{lem:modq} applies verbatim. The rational implementation was run
for all pairs with $p\le60$ and the modular one, justified by Lemma~\ref{lem:modq}, for all
$214$ pairs; both return $\dim W(0,1)=p$ throughout. Computer-assisted arguments of this kind
have become standard in the subject; compare \cite{ELS22,CJS25}, where quantum symmetries of
small graphs and of matroids are decided by machine. Each sweep takes a few minutes on a single
core, and no floating-point arithmetic is used anywhere.

The archive \cite{Mar26} is organised so that each computational claim can be rechecked on its
own. It contains four independent verifications, keyed to the numbered results: the four
certificates of Section~\ref{sec:certificates}, recomputed from the cyclotomic classes; the
confinement lemma, the sharpness of the count $(k-1)(k-2)$, the quadratic threshold and the
enumeration of Proposition~\ref{prop:eightpairs}; the saturation of Theorem~\ref{thm:sweep}, over
$\Q$ for $p\le60$ and over $\F_{q}$ throughout; and the capture depths of
Section~\ref{sub:depths}, recomputed from scratch in exact integer arithmetic and compared with
Table~\ref{tab:depths}. Each prints one line per check, and the suite runs in about ten minutes
on one core.

\subsection{Capture depths}\label{sub:depths}
In the light of Corollary~\ref{cor:onepoint}, the certificate for each pair reduces to a single
point mass, whose complexity is measured by the capture depth $d(p,E)$ of
Definition~\ref{def:depth}. Table~\ref{tab:depths} in Appendix~\ref{app:table} lists $d(p,E)$ and
the least captured vertex for all $214$ pairs. The distribution is
\[
\begin{array}{r|rrrrrr}
d & 1 & 2 & 3 & 4 & 5 & 6\\\hline
\#\{(p,E)\} & 143 & 31 & 19 & 5 & 11 & 5
\end{array}
\]
Unlike the spanning statement of Theorem~\ref{thm:sweep}, a capture is a \emph{membership}
assertion, for which the one-sided argument of Lemma~\ref{lem:modq} is unavailable: a vector may
lie in the reduction of a lattice modulo $q$ without lying in its rational span. The entries of
Table~\ref{tab:depths} were therefore computed in exact integer arithmetic, by echelon reduction
over $\Z$ with primitive rows. By Theorem~\ref{thm:capture} each row of the table certifies
$\Rmod(0,1)=\C^{p}$ on its own; the table thus subsumes Theorem~\ref{thm:sweep}, exhibiting
$214$ independent finite certificates rather than a single numerical sweep.

By Lemma~\ref{lem:V1} the $143$ pairs with $d=1$ are exactly those to which the depth-one
criterion of Corollary~\ref{cor:chassaniol} applies; the remaining $71$ are new. These are
confined to the dense regime in a precise sense: throughout the range $p\le250$,
\[
d(p,E)\ge2\implies r\le7,\qquad d(p,E)\ge3\implies r\le4,\qquad d(p,E)\ge4\implies r=2 .
\]
Every pair of depth four or more is therefore a Paley graph. Along the Paley family the depth is
non-decreasing in $p$ over the whole range:
\[
\begin{array}{c|cccccc}
p & 5 & 13 & 17 & 29\text{--}61 & 73\text{--}181 & 193\text{--}241\\\hline
d(P_{p}) & 1 & 2 & 3 & 4 & 5 & 6
\end{array}
\]
The dense regime is thus precisely the deep regime.

\begin{remark}[The harmonic point]\label{rem:two}
Table~\ref{tab:depths} exhibits a further regularity, sharpest in the Paley case. For each of the
$24$ Paley pairs with $5\le p\le241$ one finds $2^{-1}\in Z(p,E)$ --- equivalently, by
Lemma~\ref{lem:reflection}, $|Z(p,E)|$ is odd in every one of them. In eight of those cases,
namely $p=5,13,17,53,61,157,173,181$, the capture set is exactly the harmonic triple
\[
Z(p,E)=\{\,2,\,-1,\,2^{-1}\,\},
\]
the orbit of $2$ under the anharmonic group generated by $z\mapsto1-z$ and $z\mapsto z^{-1}$; in
the remaining sixteen it is larger. The name is the classical one of projective geometry:
$\{-1,2,2^{-1}\}$ is the orbit of the harmonic cross-ratio $-1$ under that group, the set of
harmonic positions relative to $\{0,1,\infty\}$. Across all $214$ pairs, $2^{-1}\in Z(p,E)$ in $197$ of them.
Lemma~\ref{lem:reflection} accounts for the appearance of $2$ and $-1$ together, and singles out
$2^{-1}$ as the only vertex that can be captured alone, but it does not explain why $2^{-1}$ is
always captured. Section~\ref{sec:conj} takes this as the sharpest available form of the capture
conjecture.
\end{remark}

\section{The capture conjecture}\label{sec:conj}

By Theorem~\ref{thm:capture} the prime-order classification is reduced to a single existence
statement, already recorded as Conjecture~\ref{conj:A} in the introduction.

\begin{conjecture}\label{conj:capture}
For every prime $p\ge5$ and every proper subgroup $E\le\Zpx$ with $-1\in E$ there is a vertex
$z\in\Zp\setminus\{0,1\}$ with $\delta_{z}\in\Rmod(0,1)$; equivalently, $K=\Zp$.
\end{conjecture}

\begin{proposition}\label{prop:conjimplies}
Conjecture~\ref{conj:capture} implies that a vertex-transitive graph of prime order $p\ge5$ has
quantum symmetry if and only if it is complete or empty. In particular it implies that no Paley
graph of prime order has quantum symmetry.
\end{proposition}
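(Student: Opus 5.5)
The argument splits the biconditional into its two directions, of which only the forward one uses the conjecture.

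For the direction ``complete or empty implies quantum symmetry'', no conjecture is needed. Such a graph on $p\ge5$ vertices has $\Qut=S_p^{+}$, which is strictly larger than $S_p=\Aut$ for $p\ge4$ \cite{Wang98}. This is exactly as in the proof of Corollary~\ref{cor:dichotomy250}.

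For the converse, let $X$ be a vertex-transitive graph of prime order $p\ge5$ that is neither complete nor empty. By Turner's theorem \cite{Tur67}, $X$ is isomorphic to a circulant $p$-graph $\Cay(\Zp,S)$. The graph is nontrivial, so its multiplier group $E$ is a proper subgroup of $\Zpx$, and it contains $-1$ because $S=-S$ (Section~\ref{sec:circulant}). Conjecture~\ref{conj:capture} applied to $(p,E)$ gives a vertex $z\notin\{0,1\}$ with $\delta_z\in\Rmod(0,1)$. Corollary~\ref{cor:onepoint}, or equivalently Theorem~\ref{thm:capture}, then yields $\Rmod(0,1)=\C^p$ and shows that $(p,E)$ is rigid. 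Concretely, Theorem~\ref{thm:criterion} applies in its $\Rmod$ form, as noted after Lemma~\ref{lem:WsubR}: Lemma~\ref{lem:assembly} uses only the fullness of the reachable modules $\Rmod(0,y_s)$, and these are all full by \eqref{eq:Requiv}. Proposition~\ref{prop:Eonly} identifies $\cat$ with $C_{A(X)}$, so $\Sigma\in C_{A(X)}(2,2)$. Lemma~\ref{lem:cat}(ii) then gives commutativity of $A(X)$, and hence $\Qut(X)=\Aut(X)$.

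The Paley statement is the special case in which $E$ is the group of quadratic residues modulo a prime $p\equiv1\pmod 4$. Here $-1\in E$, and $E$ is proper of index $2$, so $P_p$ is nontrivial and the previous paragraph applies. For $P_5$ one should check that $p\ge5$ is allowed; it is.

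The only delicate point is bookkeeping rather than mathematics. The conjecture is phrased for $\Rmod$, whereas Theorem~\ref{thm:criterion} is phrased for $W$. I would cite explicitly the remark after Lemma~\ref{lem:WsubR} that the criterion holds verbatim for $\Rmod$, so that no inclusion $\Rmod\subseteq W$ is needed.
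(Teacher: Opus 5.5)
Your proposal is correct and follows the paper's proof essentially verbatim. Turner's theorem reduces to a nontrivial circulant whose multiplier group $E\ni-1$ is proper, the conjecture together with Theorem~\ref{thm:capture} gives $\Rmod(0,1)=\C^{p}$, the criterion in its $\Rmod$ form gives commutativity of $A(X)$, and Wang's result handles the complete and empty graphs; your explicit handling of the $W$ versus $\Rmod$ bookkeeping only spells out what the paper's one-line citation of Theorem~\ref{thm:criterion} leaves implicit.
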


\begin{proof}
Let $X$ be vertex-transitive of prime order $p\ge5$. By Turner's theorem $X$ is circulant. If
$X$ is nontrivial its multiplier group $E$ is proper and contains $-1$, so the conjecture gives
$K=\Zp$, whence $\Rmod(0,1)=\C^{p}$ by Theorem~\ref{thm:capture} and $X$ has no quantum symmetry
by Theorem~\ref{thm:criterion}. Complete and empty graphs have
$\Qut=S_{p}^{+}\ne S_{p}=\Aut$ for $p\ge4$ \cite{Wang98}.
\end{proof}

We do not claim the converse. A graph could fail to have quantum symmetry for reasons invisible
to the two-basepoint module, with $K=\{0,1\}$ while $A(X)$ is commutative;
Corollary~\ref{cor:allornothing} states only that in that event the module contains no point
mass other than $\delta_{0}$ and $\delta_{1}$. The conjecture is precisely the hypothesis under
which the method of this paper decides the question.

Two features make it a realistic target. First, it requires a single vector, not a spanning set:
by Theorem~\ref{thm:capture} any capture, at any depth, suffices for a full basis. Second, its
content is arithmetic.

\subsection{What bounded depth cannot do}\label{sub:bounded}
By Theorem~\ref{thm:threshold} the conjecture is open only in the dense regime: it holds whenever
$p>(k-1)(k-2)+2$, so the remaining range is $(k-1)(k-2)\ge p-2$, which forces $k>\sqrt{p-2}$ and
hence $r=(p-1)/k=O(\sqrt p)$. In the language of Definition~\ref{def:depth},
Theorem~\ref{thm:threshold} says that $d(p,E)=1$ once $p$ exceeds a quadratic bound in $k$. It is
natural to hope that the same argument at a larger fixed depth would cover the rest --- that
some analogue of Lemma~\ref{lem:confine} at depth $2$, or at depth $3$, would close the problem.
The data of Section~\ref{sub:depths} rule this out.

Indeed, along the Paley family the depth is non-decreasing and already takes the values
$d(P_{17})=3$, $d(P_{29})=4$, $d(P_{73})=5$ and $d(P_{193})=6$ inside the range $p\le250$. Since
the certificates of Section~\ref{sec:certificates} are exactly captures at depths two and three,
they cannot be extended to the Paley family beyond $P_{17}$, however the blocks are chosen: the
required vectors are not in $V_{3}$. More generally, if the Paley depth is unbounded --- as the
data suggest --- then no criterion at a fixed depth can settle Conjecture~\ref{conj:capture}, and
in particular no strengthening of Corollary~\ref{cor:chassaniol} from depth one to depth $d$, for
$d$ fixed, can do so. Any proof of the conjecture in the dense regime must either produce
captures at unbounded depth or avoid the depth filtration altogether.

We therefore separate the two regimes.

\subsection{The intermediate regime: shallow captures and character sums}
For $r$ bounded away from $2$ the picture is different, and there the arithmetic is tractable in
principle. The vectors of $V_{1}$ are the indicators $\chi_{C_{s}\cap(1+C_{t})}$, whose
cardinalities are the classical cyclotomic numbers of order $r$; $V_{2}$ is spanned by the
restrictions to blocks of the convolution profiles
\[
x\;\longmapsto\;\bigl|\,C_{u}\cap\bigl(x-(C_{s}\cap(1+C_{t}))\bigr)\,\bigr|,
\]
whose Fourier transforms are products of Gauss sums, so that their fluctuations are governed by
Jacobi sums in the sense of \cite{IIY99} and by Weil-type character-sum estimates. A capture at
depth two is precisely a singleton level set among these profiles, restricted to a block of the
basepoint partition. The natural programme is therefore to prove, for $p$ large relative to $r$,
that some profile has a singleton level set, and to treat the finitely many remaining $p$ for
each $r$ by certificates as in Section~\ref{sec:certificates}. Success would be a depth-two
analogue of Lemma~\ref{lem:confine}, and would improve Theorem~\ref{thm:threshold} from the
quadratic bound to whatever the character-sum estimates deliver. What makes even this delicate is
that the profiles have mean of order $p/r^{3}$ and fluctuation of order $\sqrt p$, so a singleton
level set is not produced by counting alone; some structure beyond equidistribution is required.

Its scope is limited, however. In the range $p\le250$ the programme addresses the $48$ pairs
with $r\ge3$ and $d(p,E)\ge2$, all of which have $d(p,E)\le3$; it does not reach $r=2$, where
the observed depths are $4$, $5$ and $6$.

\subsection{The dense regime: the harmonic point}
For $r=2$ a different and much more specific target presents itself. Recall from
Lemma~\ref{lem:reflection} that the filtration $(V_{d})$ is stable under $\beta(x)=1-x$, the map
that exchanges the two basepoints, and that $2^{-1}$ is its unique fixed point on
$\Zp\setminus\{0,1\}$: it is the midpoint of the two basepoints, the one vertex left in place
when they are swapped.

\begin{conjecture}[Harmonic capture]\label{conj:harmonic}
For every prime $p\ge5$ and every proper subgroup $E\le\Zpx$ with $-1\in E$ one has
$\delta_{2^{-1}}\in\Rmod(0,1)$.
\end{conjecture}

\begin{proposition}\label{prop:harmonic}
Conjecture~\ref{conj:harmonic} is equivalent to Conjecture~\ref{conj:capture}.
\end{proposition}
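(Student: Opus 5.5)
The equivalence is close to formal once Theorem~\ref{thm:capture} is in hand, and I would prove the two implications separately.

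\emph{Harmonic $\Rightarrow$ capture.} Since $p\ge5$ is odd, $2$ is invertible modulo $p$ and $2^{-1}\notin\{0,1\}$: indeed $2^{-1}=0$ is impossible, and $2^{-1}=1$ would force $2=1$. So if Conjecture~\ref{conj:harmonic} holds for $(p,E)$, the vertex $z=2^{-1}$ lies in $K\setminus\{0,1\}$. This is exactly the existence statement of Conjecture~\ref{conj:capture}. The clause ``equivalently $K=\Zp$'' then follows from Theorem~\ref{thm:capture}, which applies because $-1\in E$.

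\emph{Capture $\Rightarrow$ harmonic.} Suppose some $z\notin\{0,1\}$ has $\delta_z\in\Rmod(0,1)$. Then $K\ne\{0,1\}$, and Theorem~\ref{thm:capture} gives $K=\Zp$. In particular $2^{-1}\in K$, that is, $\delta_{2^{-1}}\in\Rmod(0,1)$.

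Both implications are pointwise in $(p,E)$, so the equivalence holds pair by pair and not merely for the universally quantified statements.

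I expect no real obstacle, since the whole content sits in the capture dichotomy. The one point to state explicitly is that $2^{-1}$ is a legitimate vertex distinct from both basepoints, which is where $p$ odd is used. Lemma~\ref{lem:reflection} is not needed here; it only motivates why $2^{-1}$ is the natural vector to single out.
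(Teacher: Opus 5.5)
Your proof is correct and is the paper's argument: the forward direction uses only $2^{-1}\notin\{0,1\}$, and the converse applies the capture dichotomy (Theorem~\ref{thm:capture}) to get $K=\Zp\ni 2^{-1}$. The paper leaves two of your points implicit, namely the check that $2^{-1}$ differs from both basepoints and the observation that the equivalence holds pair by pair in $(p,E)$.
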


\begin{proof}
Since $2^{-1}\notin\{0,1\}$, Conjecture~\ref{conj:harmonic} implies
Conjecture~\ref{conj:capture} for the same pair. Conversely, if $\delta_{z}\in\Rmod(0,1)$ for
some $z\notin\{0,1\}$ then $K=\Zp$ by Theorem~\ref{thm:capture}, so $2^{-1}\in K$.
\end{proof}

The two statements are thus the same, but Conjecture~\ref{conj:harmonic} names the vector, which
is what matters: an existence statement ranging over $p-2$ candidates gives an analytic or
algebraic argument nothing to work on, whereas a membership assertion about one explicit vector
does.
Two facts single out $2^{-1}$. First, it is the unique fixed point of the only symmetry of the
filtration we can prove, so by Lemma~\ref{lem:reflection} it is the only vertex that can be
captured alone; every other capture is forced to occur in a pair $\{z,1-z\}$. Second, it is
captured at minimal depth in all $24$ Paley pairs with $p\le241$, and in $197$ of the $214$ pairs
in the range (Remark~\ref{rem:two}). We regard Conjecture~\ref{conj:harmonic}, for $E$ the group
of quadratic residues, as the central problem left open by this paper.

\begin{question}
Is $\TA(0,1)=M_{p}(\C)$ for every proper $E$? This would imply Conjecture~\ref{conj:capture},
since $W(0,1)=\TA(0,1)\delta_{0}+\TA(0,1)\delta_{1}$ would then be all of $\C^{p}$. As
$\TA(0,1)$ is a $*$-algebra, the double commutant theorem shows the assertion to be equivalent
to the statement that the only matrices commuting with every $T_{s}$ and supported on the
diagonal blocks of the basepoint partition are the scalars. An affirmative answer would give no
bound on the capture depth, and so is not in conflict with \S\ref{sub:bounded}.
\end{question}

\section*{Concluding remarks}

\noindent(1) The two-basepoint module is a strictly finer invariant than the one available to
the coherent-algebra methods of \cite{LMR20}. The quantum orbital algebra of a graph is a
coherent algebra containing its adjacency algebra, so for a rank-three graph it is the
association scheme itself and carries no information beyond it. The module $W(a,b)$, by contrast,
is attached to an \emph{ordered pair} of vertices and is acted on by the two dual idempotent
families separately; the basepoint partition it produces refines the orbital partition, and it is
this refinement that the certificates of Section~\ref{sec:certificates} exploit.

\noindent(2) The two-basepoint algebra $\TA(a,b)$ is of independent interest for cyclotomic
schemes. In the light of \cite{IIY99,HY23,HMR25}, its dimension and Wedderburn structure as
functions of the cyclotomic data, beginning with prime Paley graphs, merit investigation; the
question raised in Section~\ref{sec:conj} is a first instance.

\noindent(3) The conjecture says nothing about graphs of prime order that are not
vertex-transitive. Whether a graph of prime order whose \emph{quantum} automorphism group acts
transitively must already be vertex-transitive --- a quantum analogue of Burnside's theorem on
transitive groups of prime degree --- remains open; all known quantum-transitive graphs that are
not transitive \cite{LMR20} have composite order.

\noindent(4) The adaptation of the criterion to circulants of prime-power and general composite
order, where the spectral separation of Lemma~\ref{lem:spec} fails and genuinely quantum phenomena
are known to occur, remains open. The proof of Theorem~\ref{thm:capture} uses the primality of
$p$ in two essential ways: through the sharp $2$-transitivity of $\AGL(1,p)$, and through the
cyclicity of $\Zpx$, which ensures that two subgroups of equal order coincide.

%% \section*{Acknowledgements}
%% To be completed before submission.

\section*{Data and code availability}
The verification scripts, the certificates they produce and the data of Table~\ref{tab:depths}
are provided as ancillary files and archived at
\href{https://doi.org/10.5281/zenodo.22182428}{doi:10.5281/zenodo.22182428} \cite{Mar26}.

\appendix

\section{Capture depths for \texorpdfstring{$p\le250$}{p<=250}}\label{app:table}

For each of the $214$ pairs $(p,E)$ with $5\le p\le250$ and $E\lneq\Zpx$ of order $k$ containing
$-1$, the table records the capture depth $d=d(p,E)$ of Definition~\ref{def:depth}, computed in
exact integer arithmetic as described in Section~\ref{sub:depths} and reproducible from the
archive \cite{Mar26}, together with the least vertex $z\notin\{0,1\}$ for which
$\delta_{z}\in V_{d}$. By Lemma~\ref{lem:V1} the rows with $d=1$ are exactly those to which the
depth-one criterion of Corollary~\ref{cor:chassaniol} applies. By Theorem~\ref{thm:capture} each row on its own
certifies $\Rmod(0,1)=\C^{p}$, hence that every circulant graph of order $p$ with that multiplier
group has no quantum symmetry. By Lemma~\ref{lem:reflection} the vertex $1-z$ is captured at the
same depth as $z$.

\begin{table}[htbp]
\centering\footnotesize
\begin{tabular}{rrrrrrrrrrrrrrrr}
\toprule
$p$ & $k$ & $d$ & $z$ & $p$ & $k$ & $d$ & $z$ & $p$ & $k$ & $d$ & $z$ & $p$ & $k$ & $d$ & $z$\\
\midrule
$5$ & $2$ & $1$ & $2$ & $73$ & $18$ & $2$ & $2$ & $137$ & $8$ & $1$ & $2$ & $193$ & $96$ & $6$ & $2$\\
$7$ & $2$ & $1$ & $2$ & $73$ & $24$ & $3$ & $2$ & $137$ & $34$ & $2$ & $2$ & $197$ & $2$ & $1$ & $2$\\
$11$ & $2$ & $1$ & $2$ & $73$ & $36$ & $5$ & $2$ & $137$ & $68$ & $5$ & $2$ & $197$ & $4$ & $1$ & $2$\\
$13$ & $2$ & $1$ & $2$ & $79$ & $2$ & $1$ & $2$ & $139$ & $2$ & $1$ & $2$ & $197$ & $14$ & $1$ & $2$\\
$13$ & $4$ & $1$ & $2$ & $79$ & $6$ & $1$ & $2$ & $139$ & $6$ & $1$ & $2$ & $197$ & $28$ & $1$ & $2$\\
$13$ & $6$ & $2$ & $2$ & $79$ & $26$ & $3$ & $2$ & $139$ & $46$ & $3$ & $2$ & $197$ & $98$ & $6$ & $2$\\
$17$ & $2$ & $1$ & $2$ & $83$ & $2$ & $1$ & $2$ & $149$ & $2$ & $1$ & $2$ & $199$ & $2$ & $1$ & $2$\\
$17$ & $4$ & $1$ & $2$ & $89$ & $2$ & $1$ & $2$ & $149$ & $4$ & $1$ & $2$ & $199$ & $6$ & $1$ & $2$\\
$17$ & $8$ & $3$ & $2$ & $89$ & $4$ & $1$ & $2$ & $149$ & $74$ & $5$ & $2$ & $199$ & $18$ & $1$ & $3$\\
$19$ & $2$ & $1$ & $2$ & $89$ & $8$ & $1$ & $2$ & $151$ & $2$ & $1$ & $2$ & $199$ & $22$ & $1$ & $93$\\
$19$ & $6$ & $1$ & $2$ & $89$ & $22$ & $2$ & $2$ & $151$ & $6$ & $1$ & $2$ & $199$ & $66$ & $3$ & $2$\\
$23$ & $2$ & $1$ & $2$ & $89$ & $44$ & $5$ & $2$ & $151$ & $10$ & $1$ & $2$ & $211$ & $2$ & $1$ & $2$\\
$29$ & $2$ & $1$ & $2$ & $97$ & $2$ & $1$ & $2$ & $151$ & $30$ & $2$ & $2$ & $211$ & $6$ & $1$ & $2$\\
$29$ & $4$ & $1$ & $2$ & $97$ & $4$ & $1$ & $2$ & $151$ & $50$ & $3$ & $2$ & $211$ & $10$ & $1$ & $2$\\
$29$ & $14$ & $4$ & $2$ & $97$ & $6$ & $1$ & $2$ & $157$ & $2$ & $1$ & $2$ & $211$ & $14$ & $1$ & $2$\\
$31$ & $2$ & $1$ & $2$ & $97$ & $8$ & $1$ & $2$ & $157$ & $4$ & $1$ & $2$ & $211$ & $30$ & $2$ & $2$\\
$31$ & $6$ & $1$ & $2$ & $97$ & $12$ & $1$ & $2$ & $157$ & $6$ & $1$ & $2$ & $211$ & $42$ & $2$ & $2$\\
$31$ & $10$ & $2$ & $2$ & $97$ & $16$ & $1$ & $2$ & $157$ & $12$ & $1$ & $2$ & $211$ & $70$ & $3$ & $2$\\
$37$ & $2$ & $1$ & $2$ & $97$ & $24$ & $2$ & $2$ & $157$ & $26$ & $2$ & $2$ & $223$ & $2$ & $1$ & $2$\\
$37$ & $4$ & $1$ & $2$ & $97$ & $32$ & $3$ & $2$ & $157$ & $52$ & $2$ & $13$ & $223$ & $6$ & $1$ & $2$\\
$37$ & $6$ & $1$ & $2$ & $97$ & $48$ & $5$ & $2$ & $157$ & $78$ & $5$ & $2$ & $223$ & $74$ & $3$ & $2$\\
$37$ & $12$ & $2$ & $2$ & $101$ & $2$ & $1$ & $2$ & $163$ & $2$ & $1$ & $2$ & $227$ & $2$ & $1$ & $2$\\
$37$ & $18$ & $4$ & $2$ & $101$ & $4$ & $1$ & $2$ & $163$ & $6$ & $1$ & $2$ & $229$ & $2$ & $1$ & $2$\\
$41$ & $2$ & $1$ & $2$ & $101$ & $10$ & $1$ & $2$ & $163$ & $18$ & $1$ & $3$ & $229$ & $4$ & $1$ & $2$\\
$41$ & $4$ & $1$ & $2$ & $101$ & $20$ & $2$ & $2$ & $163$ & $54$ & $3$ & $2$ & $229$ & $6$ & $1$ & $2$\\
$41$ & $8$ & $1$ & $12$ & $101$ & $50$ & $5$ & $2$ & $167$ & $2$ & $1$ & $2$ & $229$ & $12$ & $1$ & $2$\\
$41$ & $10$ & $2$ & $2$ & $103$ & $2$ & $1$ & $2$ & $173$ & $2$ & $1$ & $2$ & $229$ & $38$ & $2$ & $2$\\
$41$ & $20$ & $4$ & $2$ & $103$ & $6$ & $1$ & $2$ & $173$ & $4$ & $1$ & $2$ & $229$ & $76$ & $3$ & $2$\\
$43$ & $2$ & $1$ & $2$ & $103$ & $34$ & $3$ & $2$ & $173$ & $86$ & $5$ & $2$ & $229$ & $114$ & $6$ & $2$\\
$43$ & $6$ & $1$ & $2$ & $107$ & $2$ & $1$ & $2$ & $179$ & $2$ & $1$ & $2$ & $233$ & $2$ & $1$ & $2$\\
$43$ & $14$ & $2$ & $2$ & $109$ & $2$ & $1$ & $2$ & $181$ & $2$ & $1$ & $2$ & $233$ & $4$ & $1$ & $2$\\
$47$ & $2$ & $1$ & $2$ & $109$ & $4$ & $1$ & $2$ & $181$ & $4$ & $1$ & $2$ & $233$ & $8$ & $1$ & $2$\\
$53$ & $2$ & $1$ & $2$ & $109$ & $6$ & $1$ & $2$ & $181$ & $6$ & $1$ & $2$ & $233$ & $58$ & $3$ & $2$\\
$53$ & $4$ & $1$ & $2$ & $109$ & $12$ & $1$ & $2$ & $181$ & $10$ & $1$ & $2$ & $233$ & $116$ & $6$ & $2$\\
$53$ & $26$ & $4$ & $2$ & $109$ & $18$ & $2$ & $2$ & $181$ & $12$ & $1$ & $2$ & $239$ & $2$ & $1$ & $2$\\
$59$ & $2$ & $1$ & $2$ & $109$ & $36$ & $3$ & $2$ & $181$ & $18$ & $1$ & $2$ & $239$ & $14$ & $1$ & $3$\\
$61$ & $2$ & $1$ & $2$ & $109$ & $54$ & $5$ & $2$ & $181$ & $20$ & $1$ & $2$ & $239$ & $34$ & $2$ & $2$\\
$61$ & $4$ & $1$ & $2$ & $113$ & $2$ & $1$ & $2$ & $181$ & $30$ & $2$ & $2$ & $241$ & $2$ & $1$ & $2$\\
$61$ & $6$ & $1$ & $2$ & $113$ & $4$ & $1$ & $2$ & $181$ & $36$ & $2$ & $2$ & $241$ & $4$ & $1$ & $2$\\
$61$ & $10$ & $1$ & $8$ & $113$ & $8$ & $1$ & $2$ & $181$ & $60$ & $3$ & $2$ & $241$ & $6$ & $1$ & $2$\\
$61$ & $12$ & $2$ & $2$ & $113$ & $14$ & $1$ & $3$ & $181$ & $90$ & $5$ & $2$ & $241$ & $8$ & $1$ & $2$\\
$61$ & $20$ & $2$ & $14$ & $113$ & $16$ & $1$ & $10$ & $191$ & $2$ & $1$ & $2$ & $241$ & $10$ & $1$ & $2$\\
$61$ & $30$ & $4$ & $2$ & $113$ & $28$ & $2$ & $2$ & $191$ & $10$ & $1$ & $2$ & $241$ & $12$ & $1$ & $2$\\
$67$ & $2$ & $1$ & $2$ & $113$ & $56$ & $5$ & $2$ & $191$ & $38$ & $2$ & $2$ & $241$ & $16$ & $1$ & $2$\\
$67$ & $6$ & $1$ & $2$ & $127$ & $2$ & $1$ & $2$ & $193$ & $2$ & $1$ & $2$ & $241$ & $20$ & $1$ & $2$\\
$67$ & $22$ & $3$ & $2$ & $127$ & $6$ & $1$ & $2$ & $193$ & $4$ & $1$ & $2$ & $241$ & $24$ & $1$ & $56$\\
$71$ & $2$ & $1$ & $2$ & $127$ & $14$ & $1$ & $20$ & $193$ & $6$ & $1$ & $2$ & $241$ & $30$ & $1$ & $34$\\
$71$ & $10$ & $1$ & $2$ & $127$ & $18$ & $1$ & $31$ & $193$ & $8$ & $1$ & $2$ & $241$ & $40$ & $2$ & $2$\\
$71$ & $14$ & $2$ & $2$ & $127$ & $42$ & $3$ & $2$ & $193$ & $12$ & $1$ & $2$ & $241$ & $48$ & $2$ & $2$\\
$73$ & $2$ & $1$ & $2$ & $131$ & $2$ & $1$ & $2$ & $193$ & $16$ & $1$ & $8$ & $241$ & $60$ & $2$ & $2$\\
$73$ & $4$ & $1$ & $2$ & $131$ & $10$ & $1$ & $2$ & $193$ & $24$ & $1$ & $3$ & $241$ & $80$ & $3$ & $2$\\
$73$ & $6$ & $1$ & $2$ & $131$ & $26$ & $2$ & $2$ & $193$ & $32$ & $2$ & $2$ & $241$ & $120$ & $6$ & $2$\\
$73$ & $8$ & $1$ & $2$ & $137$ & $2$ & $1$ & $2$ & $193$ & $48$ & $2$ & $2$ &   &   &   &  \\
$73$ & $12$ & $1$ & $11$ & $137$ & $4$ & $1$ & $2$ & $193$ & $64$ & $3$ & $2$ &   &   &   &  \\
\bottomrule
\end{tabular}
\caption{Convolution depth $d=d(p,E)$ and the least vertex $z$ with $\delta_{z}\in V_{d}$,
for all $214$ pairs $(p,E)$ with $p\le250$; here $k=|E|$. Rows with $d=1$ are exactly those
settled by the depth-one criterion of Corollary~\ref{cor:chassaniol}.}
\label{tab:depths}
\end{table}

\end{document}